\documentclass[12pt,a4paper]{article}
\usepackage{amsmath}
\usepackage{amssymb}
\usepackage{graphicx}
\usepackage{tikz}
\usetikzlibrary{calc}
\usepackage{xcolor}
\usepackage{esint}
\usepackage{amsthm}

\setlength{\voffset}{-28.4mm}
\setlength{\hoffset}{-1in}
\setlength{\topmargin}{20mm}
\setlength{\oddsidemargin}{25mm}
\setlength{\evensidemargin}{25mm}
\setlength{\textwidth}{160mm}

\setlength{\parindent}{0pt}

\setlength{\textheight}{235mm}
\setlength{\footskip}{20mm}
\setlength{\headsep}{50pt}
\setlength{\headheight}{0pt}
\voffset=-1.5cm \textheight=23cm \hoffset=-.5cm \textwidth=16cm
\oddsidemargin=1cm \evensidemargin=1cm
\footskip=35pt
\parindent=20pt
\setlength{\parindent}{0pt}

\theoremstyle{plain}
\begingroup
\newtheorem{theorem}{Theorem}[section]
\newtheorem{lemma}[theorem]{Lemma}
\newtheorem{proposition}[theorem]{Proposition}
\newtheorem{corollary}[theorem]{Corollary}
\endgroup

\theoremstyle{definition}
\begingroup
\newtheorem{definition}[theorem]{Definition}
\newtheorem{remark}[theorem]{Remark}

\endgroup

\title{A homogenization result for weak membrane energies }
\author{ Leonard Kreutz\\
University of Vienna, Oskar-Morgenstern-Platz 1,\\ 1090 Vienna, Austria\\ {\tt e-mail: leonard.kreutz@gmail.com}
}
\date{}  

\usepackage[english]{babel}

\begin{document}
\maketitle

\begin{abstract} We prove a $\Gamma$-convergence result for space dependent weak membrane energies, that is for 'truncated quadratic potentials', that are quadratic below some threshold (depending on the pair of points that we are considering) and constant above. We prove that the limit surface energy density coincides with the one for spin systems, while the bulk energy density is not affected by the different levels of truncation and coincides with an purely elastic energy density.
\end{abstract}

\textbf{Keywords:} $\Gamma$-convergence, homogenization, lattice energies, blow-up

\medskip

\textbf{AMS subject classifications:} 49J45, 74Q99, 94A08

\section{Introduction}
In recent years a number of variational models related to reconstruction problems in Computer Vision have been proposed (for a survey see e.g. the monographs \cite{aubert2006mathematical,blake1987visual,morel1996variational}). For the image segmentation problem, Mumford and Shah \cite{mumford1989optimal} proposed to minimize the functional
\begin{align}\label{MSFunctional}
F(u,C) = \int_{\Omega \setminus C} |\nabla u|^2\mathrm{d}x + c_1\int_\Omega|u-g|^2\mathrm{d}x + c_2\mathcal{H}^1(C),
\end{align}
where $\Omega \subset \mathbb{R}^2$ is a bounded open set (the image domain), $\mathcal{H}^1$ denotes the one-dimensional Hausdorff measure, $g \in L^\infty(\Omega)$ is the input image and $c_1,c_2>0$ are tuning parameters. The functional is minimized over all closed sets $C \subset \overline{\Omega}$ and all $u \in C^1(\Omega\setminus C)$. To obtain existence of (\ref{MSFunctional}) it is convenient to rewrite it in a weaker form, as
\begin{align}\label{MSSBV}
F(u) = \int_{\Omega \setminus S(u)} |\nabla u|^2\mathrm{d}x + c_1\int_\Omega|u-g|^2\mathrm{d}x + c_2\mathcal{H}^1(S(u)),
\end{align}
where $u \in SBV(\Omega)$ denotes the space of special functions of bounded variation on $\Omega$ introduced by De Giorgi and Ambrosio (see \cite{ambrosio1989variational},\cite{ambrosio2000functions}), and $S(u)$ denotes the measure theoretic discontinuity set of $u$. A key point is the lower-semicontinuity of (\ref{MSSBV}) with respect to the strong $L^1$-topology with respect to which the functional is coercive.
The function $u$ represents a denoised approximation of the input image, and $S(u)$ represents the boundaries of the segmentation. The discrete counterpart of this minimization problem is the minimization of the Blake Zisserman 'weak membrane' energie (see \cite{chambolle1995image}) given by
\begin{align*}
E(u) = \sum_{(i,j) \in \mathbb{Z}^2\cap \Omega} W\left(u_{i+1,j}-u_{i,j}\right) + W\left(u_{i,j+1}-u_{i,j}\right) + |u_{i,j}-g_{i,j}|^2,
\end{align*}
where $g=g_{i,j}$ it the input image, i.e. a grey level function defined on the matrix of pixels describing $\Omega$ and the potential $W : \mathbb{R} \to \mathbb{R}$ is given by
\begin{align*}
W(x) = \min\left\{\lambda z^2, \alpha\right\}
\end{align*}
is a truncated parabola. Here $\lambda,\alpha >0$ are tuning parameters that have to be adjusted to fit the model to a particular case (in the following we assume $\lambda,\alpha=1$). The particular shape of $W$ has a regularizing effect whenever the threshold $\left( u_{(i,j)+e_k}-u_{(i,j)}\right)^2 \leq 1$ is not exceeded. The two pixels  $u_{(i,j)+e_k}$ and $u_{(i,j)}$ should remain close unless their difference exceeds a certain threshold in which case the spring binding them is broken. The discrete jump set $S(u)$ can then be seen as the set of springs that are broken. Antonin Chambolle proved in  \cite{chambolle1995image} that energies, rescaled in a suitable way approximate well in the sense of $\Gamma$-convergence (see \cite{braides2006handbook,dal2012introduction}) an anistropic version of the (\ref{MSFunctional}). The rescaled energies are given by
\begin{align*}
E_\varepsilon(u) = \sum_{(i,j) \in \varepsilon\mathbb{Z}^2 \cap \Omega} \varepsilon^2\left( W_\varepsilon\left(\varepsilon^{-1}(u_{i+\varepsilon,j}-u_{i,j})\right) + W_\varepsilon\left(\varepsilon^{-1} (u_{i,j+\varepsilon}-u_{i,j})\right) + |u_{i,j}-g_{i,j}|^2\right),
\end{align*}
where \begin{align*}
W_\varepsilon(z) = \min\left\{ z^2, \varepsilon^{-1}\right\}.
\end{align*}
The limit functional is given by
\begin{align}\label{MSSBV}
F(u) = \int_{\Omega \setminus S(u)} |\nabla u|^2\mathrm{d}x + c_1\int_\Omega|u-g|^2\mathrm{d}x + c_2\int_{S(u)}||\nu_u(x)||_1\mathrm{d}\mathcal{H}^1, 
\end{align}
with $u \in SBV(\Omega)$, $\nu_u(x)$ denotes the measure theoretic normal to $S(u)$ at the point $x \in S(u)$ and $||\nu||_1 = |\nu_1|+|\nu_2|$. The result has further been generalized to higher dimensions and to long range interactions by Chambolle in \cite{chambolle1999finite} and by Braides and Gelli in \cite{braides1999limits}. Note however that the results stated in those articles take into account the same interactions at every point of the matrix.

\bigskip

 Homogenization of free discontinuity problems in the continuous setting has been discussed in \cite{braides1996homogenization,cagnetti2017gamma,giacomini2006gamma}.  A crucial point in the analysis is the separation of the surface and the elastic contribution. That is the admissible minimizers in the homogenization formula for the elastic and the surface energy density can be restricted to functions $u \in W^{1,2}$ or piecewise constant functions $u$ respectively. A crucial step in the demonstration of that is the usage of a Coarea formula in order to reduce from function which have vanishing elastic energy to functions which have zero elastic energy, i.e. piecewise constant functions.

 \bigskip

 The scope of this article is for a general class of finite range interaction potentials to derive the limiting theory with focus on the surface energy density and draw comparisons to the surface energies that one obtains by homogenizing spin systems, that is we consider energies of the type
\begin{align}\label{Ourenergies}
F_\varepsilon(u) = \sum_{i \in \varepsilon\mathbb{Z}^d\cap \Omega}\sum_{\xi \in V} \varepsilon^d W_\varepsilon^{i,\xi}(\varepsilon^{-1}(u_{i+\varepsilon\xi} -u_i)),
\end{align}
where $\Omega\subset \mathbb{R}^d$ denotes a bounded open regular set, $V$ is a finite subset of $ \mathbb{Z}^d$ containing the standard orthonormal basis $e_1,\ldots,e_d$, and $W_\varepsilon^{i,\xi} : \mathbb{R} \to \mathbb{R}$ is given by
\begin{align*}
W(z) = \min\left\{ z^2, \varepsilon^{-1}c_{i,\xi}^\varepsilon\right\}
\end{align*}
where $c_{i,\xi}^\varepsilon \geq 0$ and $\inf c_{i,e_k}^\varepsilon >0$, where the infimum is taken over $\varepsilon>0,k\in \{1,\ldots,d\}$ and $i \in \varepsilon\mathbb{Z}^d \cap \Omega$. (We omit the fidelity term, sine it is only a continuous perturbation of (\ref{Ourenergies})). We show that the $\Gamma$-limit (which exists up to subsequences) of (\ref{Ourenergies}) is given by
\begin{align*}
F(u) = \int_{\Omega\setminus S(u)} f(x,\nabla u)\mathrm{d}x + \int_{S(u)}\varphi(x,\nu_u(x))\mathrm{d}\mathcal{H}^{d-1},
\end{align*}
where  $f,\varphi : \Omega \times \mathbb{R}^d \to [0,+\infty]$ are characterized by asymptotic cell formulas. The surface energy density $\varphi$ is shown to agree with the surface energy density of $\Gamma$-limit of spin energies of the form
\begin{align}\label{Spinenergies}
E_\varepsilon(u)= \frac{1}{8}\sum_{i \in \varepsilon\mathbb{Z}^d\cap \Omega}\sum_{\xi \in V} \varepsilon^{d-1} c_{i,\xi}^\varepsilon (u_{i+\varepsilon\xi}-u_i)^2,
\end{align}
where $u_i \in \{\pm 1\}$, $c_{i,\xi}^\varepsilon$ as above. Integral representation formulas of the $\Gamma$-limit are presented in \cite{alicandro2016local,braides2013homogenization,caffarelli2005interfaces} where it is shown, that the energy density can be recovered by
\begin{align}
\varphi(x_0,\nu) = \lim_{\rho \to 0} \frac{1}{\rho^{d-1}}\lim_{\eta \to 0}\limsup_{\varepsilon \to 0} \inf \Bigg\{&E_\varepsilon(v, Q^\rho_\nu(x_0)) : v \in \mathcal{PC}_\varepsilon(\mathbb{R}^d;\{-1,+1\}) \\& \nonumber: v_i = (u_{x_0,\nu})_i \text{ for all } i \in  Z_\varepsilon\left(\left(Q_\nu^\rho(x_0)\right)_\eta \right)\Bigg\}.
\end{align}

\bigskip

Finally in the case of non-degeneracy of the interaction-coefficients, that is there exists $0<c<C<+\infty$ such that $c_{i,\xi}^\varepsilon \in [c,C] \cap \{0\}$ for all $\varepsilon>0,i \in \varepsilon\mathbb{Z}^d$ and $\xi \in V$, we perform a discrete Lusin type approximation of our piecewise constant functions to recover the bulk energy density $f$ as the energy density of purely elastic energies. The elastic energies are given by
\begin{align}\label{ElasticEnergies}
H_\varepsilon(u) = \sum_{i \in \varepsilon\mathbb{Z}^d\cap \Omega}\sum_{\xi \in V} \varepsilon^d \mathrm{1}_{\{c_{i,\xi}^\varepsilon > 0\}} (u_{i+\varepsilon\xi}-u_i)^2
\end{align}
whose $\Gamma$-limit (up to subsequences) is shown to exist in \cite{alicandro2004general} and takes the form
\begin{align*}
H(u) = \int_\Omega h(x,\nabla u)\mathrm{d}x.
\end{align*}
In section 5 we prove that 
\begin{align*}
h(x,\zeta) = f(x,\zeta)
\end{align*}
for almost all $x \in \Omega$ and all $\zeta \in \mathbb{R}^d$.

\bigskip

It is noteworthy that even if the form of the potentials considered as truncated parabolas seems particular their behaviour is in a sense universal that is they describe at least in an approximative sense more general convex-concave energies. The interested reader can check \cite{braides2006effective} for the relation between Lennard-Jones type potentials and the truncated parabolas in dimension 1 or \cite{braides2014global} for the (gradient flow) dynamical case. In \cite{braides2008asymptotic} it is explained how to rigorously to formulate their relationship using the technique of asymptotic expansion.

\bigskip

The paper is organized as follows. In section 2 we recall some notation and introduce the technical tools needed to perform the analysis. In section 3 we state the setting of the problem, recall some already known results and state the main theorem. In section 4 we perform the proof of the main theorem. Finally in section 5 we characterize the bulk energy density.

\section{Notation and Preliminaries}

In this chapter we introduce some notation and recall some results about the theory of functions of bounded variation as well as $\Gamma$-convergence.

\medskip

We assume that $\Omega \subset \mathbb{R}^d$ is a bounded open and Lipschitz set. We set $Q=(-\frac{1}{2},\frac{1}{2})^d$ the open unit cube with side length $1$ centred in $0$. For $\nu \in S^{d-1}$ we define $Q^\nu = R_\nu Q$, where $R_\nu$ is a rotation such that $R_\nu e_d = \nu$, where $e_1,\ldots,e_d$ stands for the canonical basis in $\mathbb{R}^d$. For a borel set $B \in \mathcal{B}(\Omega)$ we denote by $|B|$ the $d$-dimensional lebesgue measure of the set $B$. Finally we set $Q^\nu_\rho(x_0) = \rho Q^\nu + x_0$, where $\rho >0, \nu \in S^{d-1}$ and $x_0 \in \mathbb{R}^d$, we omit $\nu$ (resp. $\rho$) if $\nu=e_d$ (resp $\rho=1$). For the general theory of functions of bounded variation we refer to \cite{ambrosio2000functions,giusti1984minimal}.
Let $\Omega$ be an open bounded subset of $\mathbb{R}^d$. For $A \subset \mathbb{R}^d$ we define
$
A_\eta = \{x \in A :\mathrm{dist}(x,A^c) <\eta\}
$, $
A^+_\eta = \{x \in A :\mathrm{dist}(x,A^c) >\eta\}
$. We set
\begin{align*}
u_{x_0,\nu}^{z_1,z_2}(x) = \begin{cases}
z_2 &(x-x_0)\cdot \nu \geq 0\\
z_1 &\text{otherwise.}
\end{cases}
\end{align*}
We write $u_{x_0,\nu} = u_{x_0,\nu}^{-1,1}$.
For $u :\mathbb{R}^d \to \mathbb{R}$, $\varepsilon>0$ and $\xi \in \mathbb{Z}^d$ we define
\begin{align*}
D^\xi_\varepsilon u(x) = \frac{u(x+\varepsilon\xi)-u(x)}{\varepsilon}.
\end{align*}

 For $u \in L^1(\Omega)$ we define $u_T = (u\vee T)\wedge (-T)$. We say that $u \in L^1(\Omega)$ is a function of bounded variation if its distributional derivative $ Du \in [\mathcal{M}(\Omega)]^d $. 
We say that $u \in L^1(\Omega)$ is approximately continuous at $x \in \Omega$ if 
\begin{align*}
\lim_{\rho \to 0} \fint_{B_{\rho}(x)} |u(z)-u(x)|\mathrm{d}x =0.
\end{align*}
The set $S(u)$ of points where this property does not hold is called the approximately discontinuity set. If $u \in BV(\Omega)$, then $S(u)$ is $(d-1)$-rectifiable, i.e.
\begin{align}\label{rectifiability}
S(u) = N \cup \left(\bigcup_{i \in \mathbb{N}} \Gamma_i\right),
\end{align}
where $\mathcal{H}^{d-1}(N)=0$ and $\{\Gamma_i\}_i$ is a sequence of compact sets each contained in a $C^1$ hypersurface $\Gamma_i$. Moreover there exist borel functions $\nu_u : S(u) \to S^{d-1}$, $u^{\pm} : S(u) \to \mathbb{R}$ such that for $\mathcal{H}^{d-1}$ a.e. $z \in S(u)$ there holds
\begin{align*}
\lim_{\rho \to 0} \fint_{B_\rho(z) \cap H_\nu^+(z)} |u(x)-u^+(z)|\mathrm{d}x=0, \quad \lim_{\rho \to 0} \fint_{B_\rho(z) \cap H_\nu^-(z)} |u(x)-u^-(z)|\mathrm{d}x=0.
\end{align*}
The triplet $(u^+(z),u^-(z),\nu_u(z))$ is uniquely determined up to a change of sign of $\nu_u(z)$ and an interchange of $u^+(z)$ and $u^-(z)$. The vector $\nu$ is normal to $S(u)$ in the sense that, if $S(u)$ is represented by (\ref{rectifiability}), then $\nu(z)$ is the normal to $\Gamma_i$ for $\mathcal{H}^{d-1}$ a.e. $z \in \Gamma_i$. In particular it follows that $\nu_u(z) = \pm \nu_v(z)$ for $\mathcal{H}^{d-1}$ a.e. $z \in S(u) \cap S(v)$ and $u,v \in BV(\Omega)$. 
We denote by $\nabla u$ the approximate differential of $u$ at $z \in \Omega$ in the sense that
\begin{align*}
\lim_{\rho \to 0} \fint_{B_\rho(z)}  \frac{\left| u(x)-u(z) -  \nabla u(z) (x-z)\right|}{\left|x-z\right|} \mathrm{d}x = 0.
\end{align*}
For any function $u \in BV(\Omega)$ there holds
\begin{align*}
Du = \nabla u \mathcal{L}^d +  (u^+-u^-)\otimes\nu_u \mathcal{H}^{d-1}_{\lfloor_{S(u)}} + D^c u.
\end{align*}
We say that $u \in BV(\Omega)$ is a special function of bounded variation if the singular part is given by $(u^+-u^-)\otimes\nu_u \mathcal{H}^{d-1}_{\lfloor_{S(u)}}$, i.e.
\begin{align*}
Du = \nabla u\mathcal{L}^d + (u^+-u^-)\otimes\nu_u \mathcal{H}^{d-1}_{\lfloor_{S(u)}}.
\end{align*}
In other words $D^cu=0$.
We denote by $SBV^2(\Omega)$ the space of functions $u \in SBV(\Omega)$ such that
\begin{align*}
\nabla u \in L^2(\Omega;\mathbb{R}^d) \text{ and } \mathcal{H}^{d-1}(S(u)\cap \Omega) < +\infty.
\end{align*}
We also define the space of $GSBV^2(\Omega)$ of generalized $SBV^2(\Omega)$ as the set of all measurable functions $u : \Omega \to [-\infty,+\infty]$ such that for any $T >0 $ $u_T = (u\vee T)\wedge (-T) \in SBV^2(\Omega) $.
If $u \in GSBV^2(\Omega) \cap L^1(\Omega)$ then $u$ has approximate gradient a.e. in $\Omega$, moreover, as $T \to \infty$,
\begin{align*}
&\nabla u_T(x) \to \nabla u(x) \text{ for } \mathcal{L}^d \text{ a.e. $x$ in } \Omega, \text{   and   } |\nabla u_T(x)| \uparrow |\nabla u(x)| \text{ for } \mathcal{L}^d  \text{ a.e. $x$ in } \Omega ,\\
&S(u_T) \subset S(u), \mathcal{H}^{d-1}(S(u_T)) \to  \mathcal{H}^{d-1}(S(u)) \text{ and } \nu_{u_T} = \nu_u \text{ for }  \mathcal{H}^{d-1}\text{ a.e. $x$ in } S(u_T).
\end{align*}

\medskip

We state Besicovitch's Covering Theorem (see e.g. \cite{evans2015measure,federer2014geometric}), since it will be used in the construction for the upper bound.

\begin{theorem}[Besicovitch's Covering Theorem] Let $\mu $ be a positive radon measure on $\Omega$, and let $\mathcal{Q}$ be a collection of closed cubes which covers finely $\Omega$. Then there exists a disjoint and (finite or) countable family $\{Q_i\}_i \subset \mathcal{Q}$ such that 
\begin{align*}
\left(\Omega \setminus \bigcup_i Q_i\right)=0.
\end{align*}
\end{theorem}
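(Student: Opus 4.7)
The plan is to derive this measure-theoretic version of Besicovitch's covering theorem from its purely geometric (combinatorial) counterpart, which asserts that there is a dimensional constant $N_d$ such that any cover of a bounded set by closed cubes (or balls) with uniformly bounded diameters can be split into $N_d$ subfamilies of pairwise disjoint cubes, each of which still covers the set. This geometric statement is what I would take as the starting point, referring to the proof in Federer or Evans--Gariepy; the hard part is really in that preliminary step, since it requires a delicate combinatorial argument controlling how many cubes of comparable size can fit around a common point without much overlap.

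Granting the geometric version, I would proceed by an exhaustion argument. First I would reduce to the case $\mu(\Omega)<+\infty$ by writing $\Omega=\bigcup_n \Omega_n$ as a countable increasing union of open subsets of finite $\mu$-measure (possible since $\mu$ is Radon) and treating each $\Omega_n$ separately. Since $\mathcal{Q}$ covers $\Omega$ finely, for each fixed $\eta>0$ the subfamily $\mathcal{Q}_\eta\subset\mathcal{Q}$ of cubes of diameter less than $\eta$ also covers $\Omega$ finely, so I may assume uniformly small diameters and apply the geometric version.

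Next, by the pigeonhole principle, among the $N_d$ disjoint subfamilies $\mathcal{F}_1,\dots,\mathcal{F}_{N_d}$ covering $\Omega$ at least one, say $\mathcal{F}_{j_0}$, satisfies
\begin{equation*}
\mu\Bigl(\bigcup_{Q\in\mathcal{F}_{j_0}}Q\Bigr)\ \geq\ \frac{1}{N_d}\,\mu(\Omega).
\end{equation*}
By inner regularity of the Radon measure $\mu$ I can extract a finite subcollection $\{Q_1,\dots,Q_{n_1}\}\subset\mathcal{F}_{j_0}$ of pairwise disjoint cubes with
\begin{equation*}
\mu\Bigl(\bigcup_{k=1}^{n_1} Q_k\Bigr)\ \geq\ \frac{1}{2N_d}\,\mu(\Omega).
\end{equation*}
The set $K_1=\bigcup_{k=1}^{n_1}Q_k$ is closed, so $\Omega\setminus K_1$ is open, and the subfamily of cubes in $\mathcal{Q}$ contained in $\Omega\setminus K_1$ still covers $\Omega\setminus K_1$ finely.

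I would then iterate: at stage $m$ apply the same argument to $\Omega\setminus\bigcup_{\ell<m}K_\ell$ to produce finitely many new pairwise disjoint cubes $\{Q_k\}$ contained in the residual open set (so automatically disjoint from all previously chosen cubes), reducing the residual $\mu$-mass by a factor at most $(1-\tfrac{1}{2N_d})$ at each stage. After countably many iterations the total collection $\{Q_i\}_i$ is pairwise disjoint, at most countable, and satisfies
\begin{equation*}
\mu\Bigl(\Omega\setminus\bigcup_{i}Q_i\Bigr)\ \leq\ \Bigl(1-\tfrac{1}{2N_d}\Bigr)^{\!m}\mu(\Omega)\ \xrightarrow[m\to\infty]{}\ 0,
\end{equation*}
which gives the claim. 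The only genuinely delicate ingredient is the underlying dimensional bound $N_d$; everything else is the standard exhaustion scheme.
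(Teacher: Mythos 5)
The paper states this classical result without proof, citing Evans--Gariepy and Federer; your argument is precisely the standard textbook derivation found there (geometric Besicovitch with dimensional constant $N_d$, pigeonhole, finite exhaustion via continuity of measure, iteration on the open residual set with geometric decay). The proposal is correct and matches the approach the paper implicitly relies on through its references.
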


Next we recall the definition and some basic properties of $\Gamma$-convergence. We refer to Braides \cite{braides2002gamma} or Dal Maso \cite{dal2012introduction} for a more detailed discussion of this topic.

\medskip

Let $X$ be a metric space equipped with a distance $d$. In what follows $\{F_n\}_n$ will be a sequence of functionals on $X$, i.e. $F_n : X \to \overline{\mathbb{R}}$ and $F : X \to \overline{\mathbb{R}}$.

\begin{definition}[\textbf{$\Gamma$-convergence}]\label{Gammaconvergence definition} We say that the sequence $\{F_n\}_n $ $\Gamma$-converges to $F$ if for all $x \in X$ we have
\begin{itemize}
\item[(i)] For every sequence $\{x_n\}_n \subset X $ converging to $x$ we have that 
\begin{align*}
F(x) \leq \liminf_{n \to \infty} F_n(x_n);
\end{align*}
\item[(ii)] There exists a sequence $\{x_n\}_n \subset X$ converging to $x$ such that
\begin{align*}
F(x) \geq \limsup_{n \to \infty} F_n(x_n).
\end{align*}
\end{itemize}
The function $F$ is called the $\Gamma$-limit of $\{F_n\}_n$ and we write
\begin{align*}
\Gamma\text{-}\lim_{n \to \infty} F_n(x) = F(x).
\end{align*}
\end{definition}

\begin{remark}\label{F'Remark} If we define the functionals $\displaystyle F^{\prime}=\Gamma\text{-}\liminf_{n \to \infty} F_n : X \to \overline{\mathbb{R}}$ and  $ \displaystyle F^{\prime\prime}=\Gamma\text{-}\limsup_{n \to \infty} F_n : X \to \overline{\mathbb{R}}$ by 
\begin{align*}
&F^{\prime}(x)=\Gamma\text{-}\liminf_{n \to \infty} F_n(x) = \inf\Big\{\liminf_{n \to \infty} F_n(x_n) : x_n \to x\Big\},\\ &F^{\prime\prime}(x)=\Gamma\text{-}\limsup_{n \to \infty}F_n(x)= \inf\Big\{\limsup_{n \to \infty} F_n(x_n) : x_n \to x\Big\},
\end{align*}
we have that Definition \ref{Gammaconvergence definition} is equivalent to $\displaystyle\Gamma\text{-}\liminf_{n \to \infty} F_n(x)=\Gamma\text{-}\limsup_{n \to \infty} F_n(x)$ for all $x\in X$. This characterization will be important, since $\displaystyle\Gamma\text{-}\liminf_{n \to \infty} F_n(x)$ and $\displaystyle\Gamma\text{-}\limsup_{n \to \infty} F_n(x)$ defined above always exist and they can be studied separately. $\displaystyle\Gamma\text{-}\liminf_{n \to \infty} F_n(x)$ can be thought of as a lower limit and $\displaystyle\Gamma\text{-}\limsup_{n \to \infty} F_n(x)$ can be thought of as an upper limit to $F$. 
\end{remark}

Next we describe the embedding of the discrete functions into a common function space by interpolation. For general treatment of discrete-to-continuum convergence see \cite{ABC,braides2014discrete}.

\begin{definition}[Discrete functions and discrete-to-continuum convergence]\label{DiscreteDefintion} A function $u : \varepsilon\mathbb{Z}^d \to T$ is identified with its piecewise constant interpolation on the lattice $\varepsilon\mathbb{Z}^d$ given by
 \begin{align*}
 u(x) = \begin{cases} u(i) &x \in Q_\varepsilon(i),  i \in \varepsilon\mathbb{Z}^d \cap \Omega \\
 0 &\text{otherwise.}
 \end{cases}
 \end{align*}
 Note that in this way every such function can be seen as an element of $L^1(\Omega;T)$ (or $L^1_{\mathrm{loc}}(\mathbb{R}^d;T)$). We denote the space of piecewise constant functions associated to the lattice $\varepsilon\mathbb{Z}^d \cap \Omega$ taking values in $T$ by
 \begin{align*}
 \mathcal{PC}_{\varepsilon}(\Omega;T):= \left\{u : \Omega \to T : u \text{ is constant on } Q_\varepsilon(i), i \in \varepsilon\mathbb{Z}^d \cap \Omega  \right\}.
\end{align*}  
We say that a sequence of functions $\{u_\varepsilon\}_\varepsilon$, $u_\varepsilon : \varepsilon\mathbb{Z}^d \cap \Omega \to T$ converges to a function $u \in L^1(\Omega;T)$ strongly in $L^1(\Omega)$ if the sequence of piecewise constant interpolations (still denoted by $\{u_\varepsilon\}_\varepsilon$) converges to $u$ strongly in $L^1(\Omega)$. For $i \in \varepsilon\mathbb{Z}^d$ and $u : \varepsilon\mathbb{Z}^d \to T$ we set $u_i = u(i)$.
\end{definition}
\section{The Main Theorem}

In this section we state the setting of the problem, recall some known results and state the main theorem.

\bigskip

 Let $V\subset \mathbb{Z}^d$ containing the standard orthonormal basis $\{e_1,\ldots,e_d\}$ and let $c^\varepsilon_{i,\xi}\geq 0$ satisfy
\begin{itemize}
\item[(H1)] $\displaystyle\inf_{i,k} c^\varepsilon_{i,e_k} \geq c >0$ for all $\varepsilon >0$.
\item[(H2)] $c_{i,\xi}^\varepsilon =0 \quad \forall \xi \in \mathbb{Z}^d \setminus V$. 
\item[(H3)] $\displaystyle\sup_{i} c_{i,\xi}^\varepsilon \leq c^* <+\infty$ for all $\varepsilon>0$.
\end{itemize}
For such coefficients $c_{i,\xi}^\varepsilon$ recall the definition of spin energy $E_\varepsilon : L^1(\Omega) \times \mathcal{A}(\Omega) \to [0,+\infty]$ given by
\begin{align}\label{SpinEnergy}
E_\varepsilon(u,A) = \begin{cases}\displaystyle\frac{1}{4}\sum_{\xi \in V} \underset{i+\varepsilon\xi \in Z_\varepsilon(\Omega)}{\sum_{i \in Z_\varepsilon(A)}}\varepsilon^{d-1} c^\varepsilon_{i,\xi}(u_{i+\varepsilon\xi}-u_i)^2 & u \in \mathcal{PC}_\varepsilon(\Omega;\{-1,+1\})\\+\infty &\text{otherwise,} 
\end{cases}
\end{align}
and we define the 'weak membrane energies' $F_\varepsilon : L^1(\Omega) \times \mathcal{A}(\Omega) \to [0,+\infty]$ by

\begin{align} \label{DiscreteMSENergies}
F_\varepsilon(u,A) = \begin{cases}\displaystyle\sum_{\xi \in V}\underset{i+\varepsilon\xi \in Z_\varepsilon(\Omega)}{\sum_{i \in Z_\varepsilon( A)} }\varepsilon^{d} W^{i,\xi}_\varepsilon\left(D^\xi_\varepsilon u(i)\right) & u \in \mathcal{PC}_\varepsilon(\Omega)\\+\infty &\text{otherwise,} 
\end{cases}
\end{align}
where $W^{i,\xi}_\varepsilon : \mathbb{R} \to [0,+\infty)$ is defined by 
\begin{align*}
W^{i,\xi}_\varepsilon(z) = \frac{c^\varepsilon_{i,\xi}}{\varepsilon} \wedge z^2.
\end{align*}
We write $F_\varepsilon(u) = F_\varepsilon(u,\Omega)$. Moreover for $A \in \mathcal{A}^{reg}(\Omega)$,  $\eta,\varepsilon >0$ and $u \in \mathcal{PC}_\varepsilon(\Omega)$ we define
\begin{align*}
m_{\varepsilon,\eta}^F(u,A) = \inf\left\{F(v,A) : v \in \mathcal{PC}_\varepsilon(\Omega),  v_i = u_i \text{ for all } i \in Z_\varepsilon (A_\eta \cup A^c)\right\}.
\end{align*}

 By \cite{braides2016optimal} we have that up to subsequences $E_\varepsilon $ $\Gamma$-converges with respect to the strong $L^1(\Omega)$ topology to an energy $E :L^1(\Omega) \times \mathcal{A}(\Omega) \to [0,+\infty]$ defined by
\begin{align}\label{limit energy}
E(u,A) = \begin{cases}\displaystyle\int_{S(u)\cap A} \varphi(x,\nu_u(x))\mathrm{d}\mathcal{H}^{d-1} & u \in BV(\Omega;\{-1,+1\})\\
+\infty &\text{otherwise,}
\end{cases}
\end{align}
where $\varphi : \Omega \times S^{d-1} \to [0,+\infty)$ is given by
\begin{align}\label{Spindensity}
\varphi(x_0,\nu) = \lim_{\rho \to 0} \frac{1}{\rho^{d-1}}\lim_{\eta \to 0}\limsup_{\varepsilon \to 0} \inf \Bigg\{&E_\varepsilon(v, Q^\rho_\nu(x_0)) : v \in \mathcal{PC}_\varepsilon(\mathbb{R}^d;\{-1,+1\}) \\& \nonumber: v_i = (u_{x_0,\nu})_i \text{ for all } i \in  Z_\varepsilon\left(\left(Q_\nu^\rho(x_0)\right)_\eta \right)\Bigg\}.
\end{align}

The goal of this article is to prove the following theorem

\begin{theorem} \label{MainTheorem} Let $c_{i,\xi}^\varepsilon \geq 0$ satisfy (H1)-(H3) and let $F_\varepsilon : L^1(\Omega) \to [0,+\infty]$ be given by (\ref{DiscreteMSENergies}). Then there exists a subsequence $\{\varepsilon_k\}_k \subset \{\varepsilon\}$ such that $F_{\varepsilon_k}$ $\Gamma$-converges with respect to the strong $L^1(\Omega)$-topology to the functional $F : L^1(\Omega) \to [0,+\infty]$ defined by
\begin{align}\label{AnistropicMSEnergy}
F(u)=  \begin{cases} \displaystyle\int_\Omega f(x,\nabla u) \mathrm{d}x + \int_{S(u)}\varphi(x,\nu_u(x))\mathrm{d}\mathcal{H}^{d-1} &u \in GSBV^2(\Omega)\cap L^1(\Omega)\\+\infty &\text{otherwise,}
\end{cases}
\end{align}
where $f : \Omega \times\mathbb{R}^d  \to [0,+\infty)$ is a quasiconvex Carath\'eodory function satisfying 
\begin{align} \label{fbounds}
c(|\zeta|^2 -1) \leq f(x_0,\zeta) \leq C (|\zeta|^2+1)
\end{align}
for some $0<c<C$ and is given by
\begin{align}\label{Definition f}
f(x_0,\zeta)= \lim_{\rho \to 0} \frac{1}{\rho^d} \lim_{\eta \to 0} \limsup_{k \to\infty} m_{\varepsilon_k,\eta}^{F_\varepsilon}(\zeta \cdot, Q_\rho^\nu(x_0))
\end{align}
 and  $\varphi : \Omega \times  S^{d-1} \to [0,+\infty)$ is given by (\ref{Spindensity}) with $\{\varepsilon_k\}$ in place of $\{\varepsilon\}$.
\end{theorem}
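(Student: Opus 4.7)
The plan is to (i) establish compactness in $GSBV^2$ via a truncation / splitting of bonds, (ii) apply an abstract integral-representation theorem to the $\Gamma$-limit, and (iii) identify the surface density with $\varphi$ via a coarea-type projection onto $\{\pm 1\}$.

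First I establish compactness. For a sequence $(u_\varepsilon)$ with $\sup_\varepsilon F_\varepsilon(u_\varepsilon)<+\infty$, I truncate to $u_\varepsilon^T=(u_\varepsilon\wedge T)\vee(-T)$ (which does not increase $F_\varepsilon$) and split the pairs $(i,i+\varepsilon\xi)$ into \emph{elastic} bonds where $(D_\varepsilon^\xi u_\varepsilon^T(i))^2<c_{i,\xi}^\varepsilon/\varepsilon$ and \emph{broken} ones. By (H1) each broken $e_k$-bond costs at least $\varepsilon^{d-1}c$, so their number is $O(\varepsilon^{1-d})$ and the jump set of the associated interpolation is uniformly bounded in $\mathcal{H}^{d-1}$; elastic pairs control the $L^2$-norm of the approximate gradient. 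Ambrosio's $SBV^2$-compactness and a diagonal argument in $T$ give, up to a subsequence, $u_\varepsilon\to u\in GSBV^2(\Omega)\cap L^1(\Omega)$, and the abstract compactness of $\Gamma$-convergence applied to the localized family $F_\varepsilon(\cdot,A)$ extracts $\varepsilon_k$ and a candidate $\Gamma$-limit $F(u,A)$.

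I then verify the hypotheses of a free-discontinuity integral-representation theorem (of Bouchitt\'e--Fonseca--Mascarenhas / Cagnetti--Dal Maso--Scardia--Zeppieri type): locality, inner regularity, the measure property in $A$ (obtained via De~Giorgi--Letta from a \emph{fundamental estimate} for $F_\varepsilon$ based on cut-off gluing on an overlap layer of width $\eta\gg\varepsilon$, whose extra interactions are controlled by (H2)--(H3)), lower semicontinuity in $u$, and the quadratic growth bounds $c(|\zeta|^2-1)\leq f(x_0,\zeta)\leq C(|\zeta|^2+1)$ (upper bound by testing with $x\mapsto\zeta\cdot x$, lower bound via (H1) and the elastic-bond estimate on the coordinate directions). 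This delivers
\begin{align*}
F(u,A) = \int_A f(x,\nabla u)\,\mathrm{d}x + \int_{S(u)\cap A}\tilde\varphi(x,\nu_u)\,\mathrm{d}\mathcal{H}^{d-1},
\end{align*}
with $f$ given by (\ref{Definition f}) and $\tilde\varphi$ by the analogous surface cell formula for $F_{\varepsilon_k}$.

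The central step is to show $\tilde\varphi(x_0,\nu)=\varphi(x_0,\nu)$. The inequality $\tilde\varphi\leq\varphi$ is immediate: any $\{\pm 1\}$-valued competitor $v$ satisfies $(D_\varepsilon^\xi v(i))^2\in\{0,4/\varepsilon^2\}$, so for small $\varepsilon$ the cap $c_{i,\xi}^\varepsilon/\varepsilon$ lies below $4/\varepsilon^2$ on every jumping bond and a bond-by-bond comparison yields $F_\varepsilon(v,A)=E_\varepsilon(v,A)$, whence $m_\varepsilon^F\leq m_\varepsilon^E$. For the converse, given a competitor $v$ for the $F_\varepsilon$-cell problem at $(x_0,\nu)$ with boundary datum $u_{x_0,\nu}\in\{\pm 1\}$, I truncate to $\tilde v=(v\wedge 1)\vee(-1)$ (preserving the boundary datum and not increasing $F_\varepsilon$) and define the level-set projections $\tilde v^s_i=1$ if $\tilde v_i>s$ and $-1$ otherwise. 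The coarea identity
\begin{align*}
\int_{-1}^{1}E_\varepsilon(\tilde v^s,Q_\nu^\rho(x_0))\,\mathrm{d}s = \sum_{\xi\in V}\sum_i \varepsilon^{d-1} c_{i,\xi}^\varepsilon |\tilde v_{i+\varepsilon\xi}-\tilde v_i|
\end{align*}
is bounded, after splitting broken vs.\ elastic bonds, by $2F_\varepsilon(v,Q_\nu^\rho(x_0))$ (using $|\tilde v|\leq 1$) plus a Cauchy--Schwarz term $C\sqrt{F_\varepsilon(v,Q_\nu^\rho(x_0))}\,\rho^{d/2}$. A mean-value choice $s^\ast\in(-1,1)$ then gives $E_\varepsilon(\tilde v^{s^\ast})\leq F_\varepsilon(v)+O(\rho^{d-1/2})$; dividing by $\rho^{d-1}$ the error is $O(\rho^{1/2})\to 0$, yielding $\varphi\leq\tilde\varphi$.

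The main obstacle I anticipate is verifying the fundamental estimate for $F_\varepsilon$ and simultaneously controlling the elastic part of the coarea sum in the presence of the long-range interactions in $V$: one must glue competitors on overlap layers of width $\eta\gg\varepsilon$ while ensuring the new bonds created by the cut-off stay in the elastic regime, so that their cost is quadratic in the gradient rather than jump-type. This is handled by bounding the $L^\infty$-oscillation of the competitors via truncation and by averaging the gluing layer over the finitely many translates permitted by (H3) and the finiteness of $V$.
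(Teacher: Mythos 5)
Your proposal takes a genuinely different route from the paper: you invoke an abstract integral-representation theorem (Bouchitt\'e--Fonseca--Mascarenhas/Cagnetti--Dal Maso--Scardia--Zeppieri type) after verifying the De Giorgi--Letta measure property via a fundamental estimate, and then identify the densities $f$ and $\tilde\varphi$ through cell-formula comparisons. The paper instead avoids the abstract representation machinery entirely: it proves the $\Gamma$-liminf inequality by the Fonseca--M\"uller blow-up method (decomposing the weak-$*$ limit of the energy measures into a Lebesgue part, a surface part concentrated on $S(u)$, and a singular part), and the $\Gamma$-limsup inequality by an explicit Besicovitch covering argument after a density reduction to the class $\mathcal{D}_2(\Omega)$. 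Both strategies ultimately hinge on the same two local ingredients --- a coarea/level-set projection to reduce to two-valued competitors near a jump, and a cut-off gluing estimate near points of approximate differentiability --- but the paper carries out these estimates directly inside the blow-up/covering framework rather than delegating them to a black-box theorem. What your route buys is brevity at the level of structure; what the paper's route buys is self-containment and, importantly, the fact that the cut-off averaging argument is done once, in Lemma~\ref{LiminfBulkLemma}, precisely in the place where it is needed, rather than having to be established in the stronger form of a full fundamental estimate.

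There are two concrete gaps you should address. First, the fundamental estimate for these energies is not a triviality you can dismiss in a sentence: a naive count of the new bonds created by a cut-off on a layer of width $\eta$ gives a cost of order $\rho^{d-1}\eta/\varepsilon$, which blows up. The remedy is exactly the averaging in $k$ that the paper performs in Lemma~\ref{LiminfBulkLemma} (choosing the least-expensive of $K$ disjoint annular slabs, combined with the quadratic-in-$\nabla\varphi_k$ estimate and the fact that the competitors are $L^\infty$-close after truncation), and you would need to reproduce that argument in full to make your step (ii) rigorous, plus control the long-range $\xi\in V$ bonds crossing the slab. Second, the abstract representation theorem produces a surface density that a priori depends on the two traces, $\tilde\varphi=\tilde\varphi(x,u^+,u^-,\nu)$, via the cell problem with boundary datum $u^{u^-,u^+}_{x_0,\nu}$. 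Your coarea/truncation argument is written only for the jump amplitude $[u]=2$, i.e.\ the datum $u_{x_0,\nu}\in\{\pm1\}$; to conclude that $\tilde\varphi$ is actually independent of the traces and equals $\varphi$, you must carry out the same projection and bond-by-bond comparison for general $z_1<z_2$ --- truncating to $(v\wedge z_2)\vee z_1$, using the level-set slice in $(z_1,z_2)$, and observing that for $\varepsilon$ small a $\{z_1,z_2\}$-valued competitor satisfies $F_\varepsilon(v,A)=E_\varepsilon(\tilde v,A)$ for the $\{\pm1\}$-valued reparametrization $\tilde v$. This is precisely the paper's identity (\ref{infeq}) and should be stated and proved as such. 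Finally, a minor point: the subsequence $\{\varepsilon_k\}$ along which the abstract compactness gives your candidate $\Gamma$-limit must be the same subsequence along which the spin energies $E_{\varepsilon_k}$ $\Gamma$-converge (the paper extracts it from the spin-energy compactness result first), otherwise the claimed identification of $\tilde\varphi$ with the $\varphi$ in (\ref{Spindensity}) does not refer to the same object.
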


The proof of this theorem follows once we have proved Propositions \ref{GammaliminfMS} and \ref{GammalimsupLemma}, which will be established in the following section.

\begin{figure}[htp]
\centering
\includegraphics{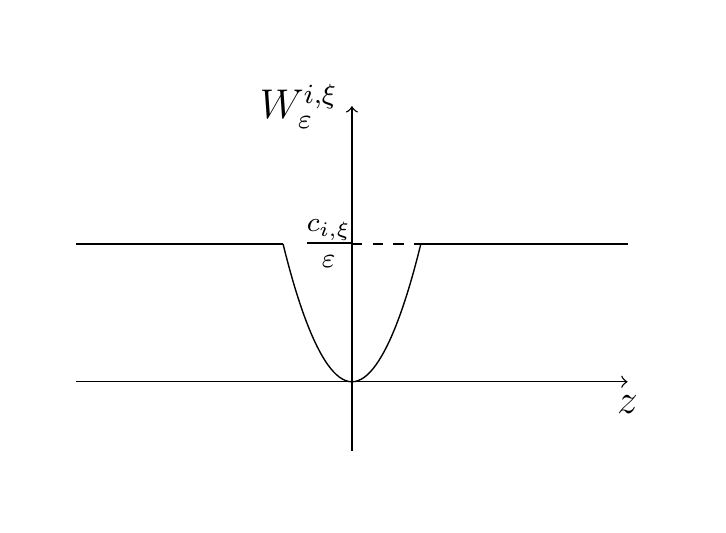}
\caption{The interaction potential for $\varepsilon >0$ between the points $ i$ and $i+\varepsilon\xi$ }
\end{figure}

\begin{remark} \label{RemarkDensityAC} If we assume that $\inf c_{i,\xi}^\varepsilon \geq c$, where the $\inf$ is taken over $\xi \in V, \varepsilon>0$, then the density of the absolutely continuous part can be computed explicitly.
In fact, since we have that $c\leq c^\varepsilon_{i,\xi} \leq c^*$ for all $\xi \in V$, with $0<c<c^*$ we have that
\begin{align}\label{densityAC}
f(x,\zeta) = \sum_{\xi \in V} | \xi\cdot\zeta|^2.
\end{align}
This follows from comparison with the energies
\begin{align*}
F^*_\varepsilon(u) = \sum_{\xi \in V} \underset{i+\varepsilon\xi \in Z_\varepsilon(\Omega)}{\sum_{i \in Z_\varepsilon( \Omega)}}\varepsilon^d W^{\varepsilon,*}_\xi(D^\xi_\varepsilon u(i)), \quad F_{\varepsilon,*}(u) = \sum_{\xi \in V} \underset{i+\varepsilon\xi \in Z_\varepsilon(\Omega)}{\sum_{i \in Z_\varepsilon(\Omega)}}\varepsilon^d W^{\varepsilon}_{\xi,*}(D^\xi_\varepsilon u(i))
\end{align*}

 whose interaction potentials are 
\begin{align*}
W^{\varepsilon,*}_\xi(z) = \frac{c^*}{\varepsilon}\wedge z^2, \quad W^{\varepsilon}_{\xi,*}(z) = \frac{c}{\varepsilon}\wedge z^2,
\end{align*}
whose $\Gamma$-limits are known by \cite{chambolle1999finite} Theorem 1, and whose density of the absolutely continuous part is given by (\ref{densityAC}). We have 
\begin{align*}
F^*_\varepsilon(u) \leq F_\varepsilon(u) \leq F_{\varepsilon,*}(u),
\end{align*}
thus the same relation hold for the $\Gamma$-limits. Fixing a point $x_0 \in \Omega$, that is a Lebesgue point for the measures $F^*(u,\cdot),F_*(u,\cdot),F(u,\cdot)$ and such that the Radon-Nikodym derivative converges to the density of the absolutely continuous part (note that this property is satisfied for all points but a Lebesgue-Null-set) (\ref{densityAC}) follows. This completely characterizes $f$. Section 5 is devoted to the characterization of $f$ in the case, where the number of interactions may vary from point to point. In that case a more careful analysis is needed.
\end{remark}
We set $H_\varepsilon : \mathcal{PC}_\varepsilon(\Omega) \times \mathcal{A}(\Omega) \to [0,+\infty]$   
\begin{align}\label{Heps}
H_\varepsilon(u,A) = \sum_{i \in Z_\varepsilon(A)}  \sum_{\xi \in V}\varepsilon^d \mathrm{1}_{c_{i,\xi}^\varepsilon >0} |D^\xi_\varepsilon u(i)|^2.
\end{align}
Note that $ F_\varepsilon(u,A) \leq H_\varepsilon(u,A)$ for all $(u,A) \in \mathcal{PC}_\varepsilon(\Omega) \times \mathcal{A}(\Omega)$. By \cite{alicandro2004general} we have that there exists a subsequence $\{\varepsilon_k\}_k$ and a caratheodory function $h$ such that
\begin{align*}
\Gamma\text{-}\lim_{k \to \infty} H_{\varepsilon_k}(u,A) =  \int_A h(x,\nabla u)\mathrm{d}x = H(u,A),
\end{align*}
where by \cite{alicandro2004general}, Theorem 3.1 and Corollary 3.11
\begin{align}\label{Definition h}
h(x_0,\zeta) = \lim_{\rho \to 0} \frac{1}{\rho^d} \limsup_{k \to \infty}  m^{H_{\varepsilon_k}}_{\varepsilon_k,\varepsilon_k}\left(\zeta\cdot,Q^\nu_\rho(x_0)\right) 
\end{align}
for all $\zeta \in \mathbb{R}^d$ and a.e. $x \in \Omega$.

\begin{remark} Note that $\varphi$ and $f$ may depend on the subsequence $\{\varepsilon_k\}_k \subset \{\varepsilon\}$ that has been chosen. By the compactness properties of $\Gamma$-convergence (see \cite{dal2012introduction}) the $\Gamma$-limit is known to always exist under subtraction of a subsequence. Since the coefficients depend on $\varepsilon$ one can construct examples like  $V=\{e_1,e_2,e_1+e_2,e_1-e_2\}$ and
\begin{align*}
c_{i,\xi}^{\varepsilon_n} = \begin{cases} 1 &\text{if } \xi\in\{e_1,e_2\} \text{ or } n \text{ odd} \\
0 &\text{otherwise},
\end{cases}
\end{align*}
where $\varepsilon = \varepsilon_n \to 0$ as $n\to \infty$.
The surface energy densities for the even and for the odd subsequence are pictured in Fig.~\ref{FigEnergyDensities} on the left and on the right respectively.
\begin{figure}[htp]
\centering
\includegraphics{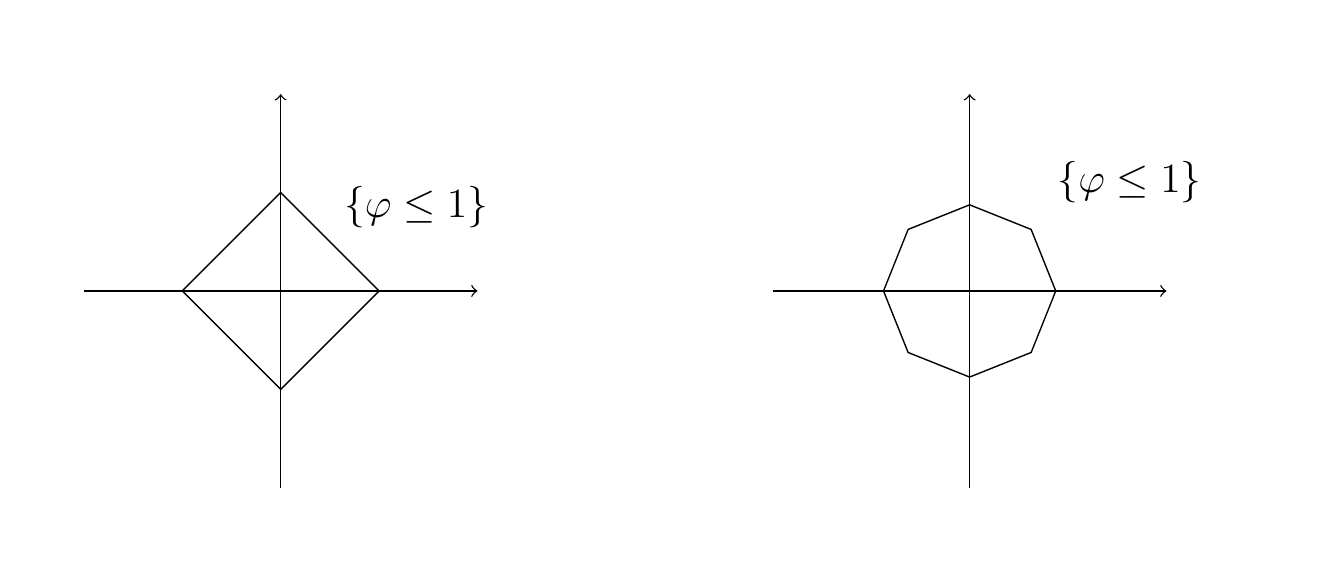}
\caption{The different surface energy densities for different subsequences}\label{FigEnergyDensities}
\end{figure}
\end{remark}
\begin{definition}
Let $c_{i,\xi}, i \in \mathbb{Z}^d, \xi \in V$, $V \subset \mathbb{Z}^d$ finite and containing the standard orthonormal basis be given. We say that the $c_{i,\xi}$ are periodic for some period $T\in \mathbb{N}$ if 
\begin{align*}
c_{i+Tz,\xi}=c_{i,\xi} \text{ for all } i,z \in \mathbb{Z}^d \text{ and all } \xi \in V.
\end{align*}
For such coefficients we set $c_{i,\xi}^\varepsilon = c_{\frac{i}{\varepsilon},\xi}$
\end{definition}
\begin{corollary} Let $T \in \mathbb{N}$ and let $c_{i,\xi}\geq 0$ satisfy (H1)-(H3) and be $T$-periodic. Then the family of functionals $F_\varepsilon:L^1(\Omega) \to [0,+\infty]$ $\Gamma$-converges with respect to the strong $L^1(\Omega)$-topology to the functional $F: L^1(\Omega)\to [0,+\infty]$ given by (\ref{AnistropicMSEnergy}), where $f(x_0,\zeta)=f(\zeta)$ does not depend on $x_0$ and is given by
\begin{align} \label{fhom}
f(\zeta) = \lim_{T \to \infty} \frac{1}{T^{d}}\inf\left\{H_1(v,Q_T) : v \in \mathcal{PC}_1(\mathbb{R}^d), v_i = \zeta i \text{ for all } i \notin Z_1(Q_T) \right\}
\end{align}
 and 
$
\varphi(x_0,\nu) = \varphi(\nu)
$ does not depend on $x_0$ and is given by 
\begin{align}\label{Varphihom}
\varphi(\nu) = \lim_{T\to \infty} \frac{1}{T^{d-1}}\inf \Bigg\{&E_1(v, Q_T^\nu) : v \in \mathcal{PC}_1(\mathbb{R}^d;\{-1,+1\}),\\&\nonumber \qquad v_i = (u_{0,\nu})_i \text{ for all } i \notin  Z_1\left(Q_T^\nu\right)\Bigg\}.
\end{align}
\end{corollary}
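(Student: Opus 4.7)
The strategy is to apply Theorem \ref{MainTheorem} to obtain subsequential $\Gamma$-convergence, then exploit periodicity in three ways: to show the densities do not depend on $x_0$, to convert the asymptotic cell formulas (\ref{Definition f}) and (\ref{Spindensity}) into single-scale cell formulas on $Q_T$, and finally to upgrade subsequential to full convergence.

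First, by Theorem \ref{MainTheorem}, extract a subsequence $\{\varepsilon_k\}$ for which $F_{\varepsilon_k} \stackrel{\Gamma}{\to} F$ with densities $f(x_0,\zeta)$ and $\varphi(x_0,\nu)$ given by the asymptotic cell formulas. To show the densities are independent of $x_0$, I would use the crucial observation that since $c_{i,\xi}^\varepsilon = c_{i/\varepsilon,\xi}$ with $c_{\cdot,\xi}$ being $T$-periodic, the functional $F_\varepsilon$ is invariant under shifts by $\varepsilon T z$ for $z\in\mathbb{Z}^d$. Hence $m^{F_\varepsilon}_{\varepsilon,\eta}(\zeta\cdot,Q_\rho^\nu(x_0 + \varepsilon T z)) = m^{F_\varepsilon}_{\varepsilon,\eta}(\zeta\cdot,Q_\rho^\nu(x_0))$ for all admissible $z$, and as $\varepsilon \to 0$ the set of permitted translations $\varepsilon T\mathbb{Z}^d$ becomes dense in $\mathbb{R}^d$. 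Because $f$ is Carathéodory, translation invariance on a dense set extends to translation invariance a.e., giving $f(x_0,\zeta) = f(\zeta)$; the same argument applied to (\ref{Spindensity}) yields $\varphi(x_0,\nu) = \varphi(\nu)$.

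Next, to derive (\ref{fhom}) and (\ref{Varphihom}), I would perform the standard discrete change of variables $j = i/\varepsilon$, $\tilde v(j) = \varepsilon^{-1} v(\varepsilon j)$. Under this scaling, the minimum problems in (\ref{Definition f}) and (\ref{Spindensity}) are recast as unscaled problems on cubes of side $\rho/\varepsilon$ with exactly the $T$-periodic coefficients $c_{j,\xi}$. For the surface density, this places us in the setting of periodic homogenization of spin systems of the type (\ref{SpinEnergy}); the single-scale formula (\ref{Varphihom}) then follows by the standard periodic limit (cf.\ \cite{braides2016optimal}) applied to the rescaled problem, after sending $\varepsilon \to 0$ (which makes $\rho/\varepsilon \to \infty$) and $\rho \to 0$. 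For the bulk density, I would invoke the identification $f = h$ (which is the content of Section 5 in the non-degenerate periodic regime, and also covered by Remark \ref{RemarkDensityAC} when all $c^\varepsilon_{i,\xi} \geq c > 0$); this replaces the truncated cell formula (\ref{Definition f}) by the cell formula (\ref{Definition h}) for the elastic energies $H_\varepsilon$. The same rescaling, combined with the classical periodic homogenization formula of \cite{alicandro2004general} for $H_1$, then yields (\ref{fhom}).

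Finally, since $f$ and $\varphi$ are characterized explicitly by (\ref{fhom}) and (\ref{Varphihom}) and are therefore the same for every subsequence extracted, Urysohn's property of $\Gamma$-convergence upgrades the subsequential convergence to full convergence of $F_\varepsilon$. I expect the main obstacle to be the bulk step: faithfully bridging the truncated formula (\ref{Definition f}) to the untruncated formula (\ref{fhom}), since the passage through $h$ relies on the Lusin-type approximation of Section 5, and one must also verify that the scaling $j = i/\varepsilon$ is compatible with the nested limits in $\eta$, $\rho$, and $\varepsilon$ so that the iterated limit collapses to a single limit in $T$ as required by (\ref{fhom}).
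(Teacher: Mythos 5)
Your proposal is correct and takes essentially the same route as the paper: extract a subsequential $\Gamma$-limit from Theorem \ref{MainTheorem}, identify $f=h$ via Proposition \ref{Prop Char BulkDensity}, appeal to the periodic homogenization results of \cite{alicandro2004general} (Theorem 4.1) for the elastic cell formula (\ref{fhom}) and of \cite{alicandro2016local} (Theorem 4.7) for the spin surface formula (\ref{Varphihom}), and conclude by a Urysohn argument; your explicit shift-invariance and rescaling steps simply unpack what those citations already deliver. One bookkeeping point worth making explicit, which both you and the paper pass over: Proposition \ref{Prop Char BulkDensity} is stated under (H1)--(H4), while the corollary assumes only (H1)--(H3); the gap is harmless because $T$-periodicity forces $c_{i,\xi}$ to take finitely many values, so the nonzero ones are automatically bounded away from zero and (H4) holds for free.
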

\begin{proof} By Theorem \ref{MainTheorem} we have that there exists a subsequence $\{\varepsilon_k\}_k$ and $F:L^1(\Omega)\to [0,+\infty]$ of the form (\ref{AnistropicMSEnergy}) the such that $F_{\varepsilon_k}$ $\Gamma$-converges with respect to the strong $L^1(\Omega)$ topology to the functional $F$. By Proposition \ref{Prop Char BulkDensity} we have that $f=h$ with $h$ given by (\ref{Definition h}). By \cite{alicandro2004general}, Theorem 4.1 we have that $h$ and therefore $f$ is given by (\ref{fhom}). $\varphi$ coincides with the density of the $\Gamma$-limit of the spin energies given by (\ref{limit energy}). By \cite{alicandro2016local},Theorem 4.7 we have that $\varphi$ is independent of the first variable and given by (\ref{Varphihom}). Since the $F$ is independent of the chosen subsequence we have that actually the whole sequence $F_\varepsilon$ $\Gamma$-converges with respect to the strong $L^1(\Omega)$-topology to $F$.
\end{proof}

\begin{remark} Note that if we have periodic interaction coefficients of finite range by \cite{kreutz2018thesis} Theorem 3.0.5 we have that the surface energy density is crystalline, that is the set $\{\varphi\leq 1\}$ is a convex polyhedron. This implies that for fixed period of the coefficients $c_{i,\xi}$ the (isotropic) Mumford Shah functional can only be approximated up to a certain error (depending on the period), since certain directions are preferred due to the crystallinity of the surface energy density. In \cite{braides2016optimal} it is proved that there exist periodic microstructures whose periods tend to $\infty$ and whose homogenized surface energy densities approximate arbitrarily well the energy density $\varphi(\nu)=|\nu|$.
\end{remark}
\section{Asymptotic Analysis}

This section contains the proof of the main theorem. The equi-coercivity follows by using (H1) and estimating from below with funtionals that are coercive with respect to the strong $L^1$-topology. The lower bound follows by a blowup-argument while using a discrete coarea formula to reduce the class of admissible competitors for the cell formula to piecewise constant functions taking only two values. The upper bound is done in two steps. First we prove a density result and for that class we construct an (explicit) recovery sequence using a Besicovitch covering argument.
\begin{lemma}\label{Compactnesslemma} Let $\{u_\varepsilon\}_\varepsilon \subset \mathcal{PC}_\varepsilon(\Omega)$ be such that
\begin{align*}
\sup_{\varepsilon >0} F_\varepsilon(u_\varepsilon) < +\infty,\quad \sup_{\varepsilon >0} ||u_\varepsilon||_\infty < +\infty.
\end{align*}
Then there exists a subsequence $\{u_{\varepsilon_k}\}_k \subset \{u_\varepsilon\}_\varepsilon$ and a function $u \in SBV^2(\Omega) \cap L^\infty(\Omega)$ such that $u_{\varepsilon_k}$ converges to $u$ with respect to the strong $L^1(\Omega)$ topology.
\end{lemma}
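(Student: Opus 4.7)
The plan is to reduce the problem to a standard application of Ambrosio's $SBV$ compactness theorem by constructing an auxiliary sequence of $SBV^2$ functions $\tilde u_\varepsilon$ that are close to $u_\varepsilon$ in $L^1$ and whose bulk and surface energies are controlled by $F_\varepsilon(u_\varepsilon)$.

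The first step is to exploit hypothesis (H1) to isolate the nearest-neighbour coordinate interactions. Since $c_{i,e_k}^\varepsilon\geq c>0$, we have $W^{i,e_k}_\varepsilon(z)\geq (c/\varepsilon)\wedge z^2$, and therefore
$$F_\varepsilon(u_\varepsilon)\geq \sum_{k=1}^d\sum_{i\in Z_\varepsilon(\Omega)}\varepsilon^d\bigl((c/\varepsilon)\wedge (D^{e_k}_\varepsilon u_\varepsilon(i))^2\bigr).$$
For each $\varepsilon$ and each nearest-neighbour bond $(i,i+\varepsilon e_k)$, call the bond \emph{elastic} if $(u_\varepsilon)_{i+\varepsilon e_k}-(u_\varepsilon)_i|^2\leq c\varepsilon$ and \emph{broken} otherwise. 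The above inequality then gives
$$\sum_{\text{elastic bonds}}\varepsilon^d\bigl(D^{e_k}_\varepsilon u_\varepsilon(i)\bigr)^2\;+\;c\,\varepsilon^{d-1}\#\{\text{broken bonds}\}\;\leq\;F_\varepsilon(u_\varepsilon)\leq C.$$

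Next I construct $\tilde u_\varepsilon$ as follows: on each cube $Q_\varepsilon(i)$ decomposed into simplices by a standard triangulation whose $1$-skeleton uses only the nearest-neighbour bonds, define $\tilde u_\varepsilon$ by linear interpolation of the vertex values on every simplex whose bonds are all elastic, and insert a piecewise flat discontinuity hypersurface (lying inside the cube and of $\mathcal{H}^{d-1}$-measure $\leq C\varepsilon^{d-1}$) crossing each broken bond. This construction is classical (see, e.g., the proof of equi-coercivity in \cite{chambolle1999finite,braides1999limits}). Then, by the elementary estimates
$$\int_\Omega|\nabla\tilde u_\varepsilon|^2\,\mathrm{d}x\;\leq\;C\sum_{\text{elastic}}\varepsilon^d\bigl(D^{e_k}_\varepsilon u_\varepsilon(i)\bigr)^2\;\leq\;CF_\varepsilon(u_\varepsilon),$$
$$\mathcal{H}^{d-1}(S(\tilde u_\varepsilon))\;\leq\;C\,\varepsilon^{d-1}\#\{\text{broken bonds}\}\;\leq\;\frac{C}{c}F_\varepsilon(u_\varepsilon),$$
together with $\|\tilde u_\varepsilon\|_\infty\leq \|u_\varepsilon\|_\infty$, we have uniform bounds on $\|\nabla \tilde u_\varepsilon\|_{L^2}$, on $\mathcal{H}^{d-1}(S(\tilde u_\varepsilon))$, and on $\|\tilde u_\varepsilon\|_\infty$.

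By Ambrosio's $SBV$ compactness theorem (see \cite{ambrosio2000functions}), there is a subsequence $\tilde u_{\varepsilon_k}\to u$ in $L^1(\Omega)$ with $u\in SBV^2(\Omega)\cap L^\infty(\Omega)$ and $\nabla\tilde u_{\varepsilon_k}\rightharpoonup \nabla u$ in $L^2$, $\mathcal{H}^{d-1}(S(u))<+\infty$. Finally one checks that $\|u_\varepsilon-\tilde u_\varepsilon\|_{L^1(\Omega)}\to 0$: on any cube the interpolation error is controlled by $\varepsilon\cdot|(u_\varepsilon)_{i+\varepsilon e_k}-(u_\varepsilon)_i|$, and summing separately over elastic bonds (using Cauchy--Schwarz and the $L^2$-bound on the discrete gradient) and over broken bonds (using $\|u_\varepsilon\|_\infty\leq C$ and the bound on the number of broken bonds) yields a total of order $\varepsilon\to 0$. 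Hence $u_{\varepsilon_k}\to u$ in $L^1(\Omega)$ as well.

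The main obstacle is purely bookkeeping: writing out an explicit $SBV^2$ interpolation $\tilde u_\varepsilon$ of the piecewise constant function $u_\varepsilon$ whose gradient and jump set are simultaneously controlled with the right scalings. All the analytic work beyond that reduces to Ambrosio's compactness theorem and elementary Cauchy--Schwarz estimates, and (H1) is what makes the reduction to coordinate bonds legitimate.
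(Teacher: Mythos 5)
Your proposal is correct and follows essentially the same route as the paper: both reduce via (H1) to the uniform nearest-neighbour truncated-quadratic functional, and from there obtain $L^1$-compactness with $SBV^2$ limits. The paper simply delegates the remaining work (the piecewise-affine interpolation with jump insertion at broken bonds, the gradient/surface estimates, Ambrosio's theorem, and the $L^1$-closeness of $u_\varepsilon$ and $\tilde u_\varepsilon$) to Lemma~1 of \cite{chambolle1999finite}, which is exactly the argument you spell out.
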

\begin{proof} The proof follows by applying  \cite{chambolle1999finite}, Lemma 1 and noting that
\begin{align*}
F_\varepsilon(u_\varepsilon) \geq \sum_{k=1}^d \sum_{i \in Z_\varepsilon(\Omega)} \varepsilon^d W_\varepsilon(D^{e_k}_\varepsilon u(i)),
\end{align*}
where $W_\varepsilon : \mathbb{R} \to \mathbb{R}$ is defined by 
\begin{align*}
W_\varepsilon(z) = \frac{c_* }{\varepsilon}\wedge z^2,
\end{align*}
with $c_* = \displaystyle \inf_{i \in \varepsilon\mathbb{Z}^d,k\in \{1,\ldots,d\} }c^\varepsilon_{i,e_k} >0$.

\end{proof}

\begin{proposition}\label{GammaliminfMS}
\begin{align*}
F^{\prime}(u) \geq F(u).
\end{align*}
\end{proposition}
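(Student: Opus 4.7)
The plan is to apply the Fonseca--M\"uller blow-up method, localised via an auxiliary Radon measure. Take an arbitrary $u_\varepsilon \to u$ in $L^1(\Omega)$ with $L := \liminf_\varepsilon F_\varepsilon(u_\varepsilon) < +\infty$; truncating at height $T$ and applying Lemma \ref{Compactnesslemma} to $u_\varepsilon^T$ (whose energy does not exceed that of $u_\varepsilon$, since $W^{i,\xi}_\varepsilon$ is nondecreasing in $|z|$), together with a diagonal argument in $T$, gives $u \in GSBV^2(\Omega)\cap L^1(\Omega)$. Attach to $u_\varepsilon$ the positive Radon measure $\mu_\varepsilon = F_\varepsilon(u_\varepsilon, \cdot)$ concentrating each interaction's $\varepsilon^d$-mass on $Q_\varepsilon(i)$; pass to a subsequence so that $\mu_\varepsilon \rightharpoonup^* \mu$ with $\mu(\Omega) \leq L$ and decompose $\mu = \mu^a\,\mathcal{L}^d + \mu^j\,\mathcal{H}^{d-1}\lfloor_{S(u)} + \mu^s$. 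It suffices to prove $\mu^a(x_0) \geq f(x_0, \nabla u(x_0))$ for $\mathcal{L}^d$-a.e. $x_0 \in \Omega\setminus S(u)$ and $\mu^j(x_0) \geq \varphi(x_0, \nu_u(x_0))$ for $\mathcal{H}^{d-1}$-a.e. $x_0 \in S(u)$.

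For the bulk part, fix $x_0$ a point of approximate differentiability of $u$ at which $\mu^a(x_0) = \lim_{\rho\to 0}\mu(Q_\rho(x_0))/\rho^d$, and let $\ell(x) = u(x_0) + \zeta\cdot(x-x_0)$ with $\zeta = \nabla u(x_0)$. A standard slicing / fundamental-estimate argument modifies $u_\varepsilon$ inside the annulus $Q_\rho(x_0)\setminus (Q_\rho(x_0))_{\eta\rho}^+$ to agree with the discretisation of $\ell$ on $(Q_\rho(x_0))_{\eta\rho}\cup Q_\rho(x_0)^c$, at the cost of a remainder bounded by $(\eta\rho)^{-2}\int_{Q_\rho(x_0)}|u_\varepsilon-\ell|^2\,dx$ plus a boundary layer term of order $\eta\mu(Q_\rho(x_0))$. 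The first vanishes as $\varepsilon\to 0$, $\rho\to 0$ by approximate differentiability, the second as $\eta\to 0$. The resulting function is admissible in $m^{F_{\varepsilon}}_{\varepsilon, \eta\rho}(\ell, Q_\rho(x_0))$, and the definition (\ref{Definition f}) gives directly $\mu^a(x_0) \geq f(x_0,\nabla u(x_0))$.

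For the surface part, the plan is to reduce the arbitrary piecewise constant competitor $u_\varepsilon$ to a spin competitor admissible in (\ref{Spindensity}) by a discrete coarea argument. Fix $x_0 \in S(u)$ with $\nu = \nu_u(x_0)$ and traces $u^\pm = u^\pm(x_0)$. For each $t\in(u^-,u^+)$ the level-set function $w^t_\varepsilon(i) := \operatorname{sign}(u_\varepsilon(i)-t) \in \{-1,+1\}$ converges in $L^1_{loc}$ to $u_{x_0,\nu}$. Call an edge $(i,i+\varepsilon\xi)$ \emph{fractured} when $|u_\varepsilon(i+\varepsilon\xi)-u_\varepsilon(i)| \geq \sqrt{c^\varepsilon_{i,\xi}\varepsilon}$, i.e. when $W^{i,\xi}_\varepsilon$ saturates at $c^\varepsilon_{i,\xi}/\varepsilon$. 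On fractured edges
\begin{align*}
\varepsilon^d W^{i,\xi}_\varepsilon(D^\xi_\varepsilon u_\varepsilon(i)) = \varepsilon^{d-1}c^\varepsilon_{i,\xi} \geq \frac{\varepsilon^{d-1}c^\varepsilon_{i,\xi}}{4}\bigl(w^t_\varepsilon(i+\varepsilon\xi) - w^t_\varepsilon(i)\bigr)^2,
\end{align*}
so summing over edges in $Q^\nu_\rho(x_0)$ yields $F_\varepsilon(u_\varepsilon, Q^\nu_\rho(x_0)) \geq E_\varepsilon(w^t_\varepsilon, Q^\nu_\rho(x_0)) - R^t_\varepsilon(\rho)$ with $R^t_\varepsilon(\rho)$ coming from non-fractured edges on which $w^t_\varepsilon$ flips. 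Averaging $t$ over an interval $I\Subset(u^-,u^+)$ and applying Cauchy--Schwarz with the inequality $|u_\varepsilon(i+\varepsilon\xi)-u_\varepsilon(i)| < \sqrt{c^\varepsilon_{i,\xi}\varepsilon}$ on non-fractured edges one obtains
\begin{align*}
\frac{1}{|I|}\int_I R^t_\varepsilon(\rho)\,dt \leq C\,F_\varepsilon(u_\varepsilon, Q^\nu_\rho(x_0))^{1/2}\rho^{d/2} = o(\rho^{d-1})
\end{align*}
as $\varepsilon\to 0$ and then $\rho\to 0$, using $F_\varepsilon(u_\varepsilon,Q^\nu_\rho(x_0)) = O(\rho^{d-1})$ at a surface point. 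Selecting $t_\varepsilon\in I$ realising the average, gluing $w^{t_\varepsilon}_\varepsilon$ to $u_{x_0,\nu}$ on a thin boundary layer by flipping $o(\rho^{d-1}/\varepsilon^{d-1})$ spins, and invoking (\ref{Spindensity}) produces $\mu^j(x_0) \geq \varphi(x_0,\nu)$.

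The main technical obstacle is precisely this coarea / choice-of-$t$ argument: one must ensure that the spin lift $w^{t_\varepsilon}_\varepsilon$ does not incur substantial extra surface energy on non-fractured edges, which hinges on the Cauchy--Schwarz bound against $F_\varepsilon$ and on the surface-scale control $F_\varepsilon(u_\varepsilon, Q^\nu_\rho(x_0)) = O(\rho^{d-1})$ provided by the measure $\mu$. Once the reduction to spin competitors is in place, the surface lower bound is inherited from the known integral representation of the spin $\Gamma$-limit, while the bulk inequality is a direct adaptation of the standard arguments.
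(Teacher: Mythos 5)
Your proposal follows the same blueprint as the paper: truncate, apply Fonseca--M\"uller blow-up to the Radon measures $\mu_\varepsilon$ attached to $F_\varepsilon(u_\varepsilon,\cdot)$, decompose the weak$^*$ limit $\mu$, and prove the two pointwise density bounds at $\mathcal{L}^d$-generic bulk points and $\mathcal{H}^{d-1}$-generic jump points. For the surface part the paper builds a two-valued function $w_\varepsilon\in\{z_1,z_2\}$ from a good level $t_\varepsilon$ (found by Fubini and H\"older on the cardinality of $I_{t,\varepsilon}$, exactly your coarea estimate on the $t$-average of the fractured/non-fractured remainder), then glues to $u_{x_0,\nu}^{z_1,z_2}$ and finally invokes the identity $m^{F_\varepsilon}_{\varepsilon,\eta}(u_{x_0,\nu}^{z_1,z_2},\cdot)=m^{E_\varepsilon}_{\varepsilon,\eta}(u_{x_0,\nu},\cdot)$, valid once $|z_1-z_2|>\sqrt{c^*\varepsilon}$, to land in the spin cell problem. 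You skip the two-valued intermediary and compare $F_\varepsilon(u_\varepsilon)$ directly against $E_\varepsilon$ of the $\pm1$ lift $w^t_\varepsilon$; your Cauchy--Schwarz bound on $\frac{1}{|I|}\int_I R^t_\varepsilon\,dt$ is the paper's estimate on $\#I_{t_\varepsilon,\varepsilon}$ in disguise, so the two are essentially the same argument written in different coordinates.

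One step in your bulk part does not close as written. You claim the cut-off remainder $(\eta\rho)^{-2}\int_{Q_\rho(x_0)}|u_\varepsilon-\ell|^2\,dx$ vanishes after dividing by $\rho^d$ ``by approximate differentiability.'' For $u\in GSBV^2$, approximate differentiability only gives $\fint_{Q_\rho}|u-\ell|\,dx=o(\rho)$, i.e.\ $\int_{Q_\rho}|u-\ell|\,dx=o(\rho^{d+1})$; the global $T$-truncation you use only yields $\int|u_\varepsilon-\ell|^2\le C\int|u_\varepsilon-\ell|=o(\rho^{d+1})$, and hence $\rho^{-d}(\eta\rho)^{-2}\int|u_\varepsilon-\ell|^2=\eta^{-2}\,o(\rho^{-1})$ diverges. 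The paper repairs this by truncating $u_\varepsilon$ at the $\rho$-dependent levels $m_\rho(x_0)$, $M_\rho(x_0)$ of the affine competitor $u_0$ on $Q_\rho(x_0)$, giving $|u_\varepsilon^\rho-u_0|\le C\rho$ and therefore $\int|u_\varepsilon^\rho-u_0|^2\le C\rho\int|u_\varepsilon^\rho-u_0|=o(\rho^{d+2})$, after which the remainder is killed by the De Giorgi averaging over $K$ cut-off layers. You need this $\rho$-dependent $L^\infty$-truncation (or an equivalent device) for the bulk lower bound to go through.
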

\begin{proof} It suffices to consider $u_\varepsilon \to u$ in $L^1(\Omega)$ such that 
\begin{align*}
\liminf_{\varepsilon \to 0} F_\varepsilon(u) <+\infty.
\end{align*}
Up to subsequences we may suppose that $\displaystyle \liminf_{n \to \infty} F_{\varepsilon_n}(u_{\varepsilon_n})= \lim_{\varepsilon \to 0} F_{\varepsilon_n}(u_{\varepsilon_n})$. Since 
$
F_\varepsilon((u_\varepsilon)_T) \leq F_\varepsilon(u)
$ and $\displaystyle\lim_{T \to \infty} F(u_T) =F(u)$
we assume furthermore, that $\displaystyle\sup_{\varepsilon >0} || u_\varepsilon ||_\infty \leq C <+\infty $. By Lemma \ref{Compactnesslemma} we have that $u \in SBV^2(\Omega)$. Consider now the family of measures
\begin{align*}
\mu_n = \sum_{\xi \in V}\sum_{i \in Z_\varepsilon(\Omega) } \varepsilon_n^d W_{i,\xi}^{\varepsilon_n}\left(D^\xi_{\varepsilon_n} u(i)\right) \delta_{i}.
\end{align*}
Note that $\displaystyle \sup_{n} \mu_n(\Omega) = F_{\varepsilon_n}(u_{\varepsilon_n}) <+\infty$ and therefore up to passing to a further subsequence (not relabbeled), we may suppose that there exists $\mu \in \mathcal{M}_b(\Omega)$ such that $\mu_n \overset{*}{\rightharpoonup} \mu$. By the Radon-Nikodym Theorem we may decompose $\mu$ into three mutually disjoint non-negative measures such that
\begin{align*}
\mu = g\mathcal{L}^d + q\mathcal{H}^{d-1}\lfloor_{S(u)} + \mu_s.
\end{align*}
We complete the proof if we show that

\begin{itemize}
\item[i)] $q(x_0) \geq \varphi(x_0,\nu_u(x_0)) \text{ for } \mathcal{H}^{d-1} \text{-a.e. } x_0 \in S(u). $
\item[ii)] $g(x_0) \geq f(x_0,\nabla u(x_0)) \text{ for } \mathcal{L}^d \text{-a.e. } x_0 \in \Omega$.
\end{itemize}
The claim follows using Lemma \ref{LiminfSurfaceLemma} and Lemma \ref{LiminfBulkLemma}.
\end{proof}
\begin{lemma} \label{LiminfSurfaceLemma}
\begin{align*}
q(x_0) \geq \varphi(x_0,\nu_u(x_0)) \text{ for } \mathcal{H}^{d-1} \text{-a.e. } x _0\in S(u). 
\end{align*}
\end{lemma}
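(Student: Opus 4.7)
The plan is a blow-up at $\mathcal{H}^{d-1}$-a.e.\ $x_0 \in S(u)$ combined with a discrete coarea identity that converts the scalar competitor $u_{\varepsilon_n}$ into an admissible spin competitor for the cell formula (\ref{Spindensity}). At $\mathcal{H}^{d-1}$-a.e.\ $x_0 \in S(u)$, Besicovitch differentiation together with the usual blow-up properties of $u \in SBV^2(\Omega)$ gives
\[
q(x_0) = \lim_{\rho \to 0} \frac{\mu(Q^\nu_\rho(x_0))}{\rho^{d-1}},\qquad \nu = \nu_u(x_0),
\]
the upper derivative of the Lebesgue and singular parts of $\mu$ at $x_0$ vanish, and $u$ has one-sided traces $a := u^-(x_0) < u^+(x_0) =: b$ (WLOG). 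For $\mu$-continuity radii $\rho$ one also has $\mu(Q^\nu_\rho(x_0)) = \lim_n F_{\varepsilon_n}(u_{\varepsilon_n}, Q^\nu_\rho(x_0))$, so the task reduces to producing, for each such $\rho$, a function $\tilde v_{\varepsilon_n} \in \mathcal{PC}_{\varepsilon_n}(\mathbb{R}^d;\{\pm 1\})$ that equals $u_{x_0,\nu}$ on $Z_{\varepsilon_n}((Q^\nu_\rho(x_0))_\eta)$ and satisfies
\[
\limsup_n E_{\varepsilon_n}(\tilde v_{\varepsilon_n}, Q^\nu_\rho(x_0)) \le \mu(Q^\nu_\rho(x_0)) + o(\rho^{d-1}).
\]
Dividing by $\rho^{d-1}$ and letting $n\to\infty$, $\eta\to 0$, $\rho\to 0$ in that order, the definition (\ref{Spindensity}) then yields $\varphi(x_0,\nu) \le q(x_0)$.

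For the interior construction I set $v^t_\varepsilon(i) := 1$ if $u_\varepsilon(i) > t$ and $-1$ otherwise, for $t \in (a,b)$; since $u_\varepsilon \to u$ in $L^1$, each such level-set spin function satisfies $v^t_\varepsilon \to u^{-1,+1}_{x_0,\nu}$ in $L^1_{\mathrm{loc}}$. Using that $(v^t_\varepsilon(i+\varepsilon\xi) - v^t_\varepsilon(i))^2 = 4$ exactly when $t$ lies strictly between $u_\varepsilon(i)$ and $u_\varepsilon(i+\varepsilon\xi)$, integration in $t$ gives the coarea estimate
\[
\int_a^b E_\varepsilon(v^t_\varepsilon, Q^\nu_\rho(x_0))\,dt \le \sum_{\xi \in V}\sum_{i \in Z_\varepsilon(Q^\nu_\rho(x_0))} \varepsilon^{d-1} c^\varepsilon_{i,\xi}\bigl(|u_\varepsilon(i) - u_\varepsilon(i+\varepsilon\xi)| \wedge (b-a)\bigr).
\]
I split the right-hand side by whether the bond is broken ($|D^\xi_\varepsilon u_\varepsilon(i)|^2 > c^\varepsilon_{i,\xi}/\varepsilon$), for which $\varepsilon^{d-1} c^\varepsilon_{i,\xi}(b-a) \le (b-a)\varepsilon^d W^{i,\xi}_\varepsilon(D^\xi_\varepsilon u_\varepsilon(i))$, or intact, for which I apply Cauchy--Schwarz $cd \le \delta d^2 + c^2/(4\delta)$ with $\delta = b-a$ together with (H3). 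Summing, the right-hand side is bounded by $(b-a)F_\varepsilon(u_\varepsilon, Q^\nu_\rho(x_0)) + (c^*)^2 |V| \rho^d/(4(b-a))$, and the mean value theorem supplies $t^*_\varepsilon \in (a,b)$ with
\[
E_\varepsilon(v^{t^*_\varepsilon}_\varepsilon, Q^\nu_\rho(x_0)) \le F_\varepsilon(u_\varepsilon, Q^\nu_\rho(x_0)) + \frac{(c^*)^2 |V| \rho^d}{4(b-a)^2}.
\]
After $\rho^{-(d-1)}$-scaling the error is $O(\rho)$, which is negligible as $\rho \to 0$.

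To promote $v^{t^*_\varepsilon}_\varepsilon$ to an admissible competitor in (\ref{Spindensity}), I would modify it inside the layer $(Q^\nu_\rho(x_0))_\eta$ to equal $u_{x_0,\nu}$. Using a Fubini slicing over the one-parameter family of nested sub-cubes $Q^\nu_s(x_0)$ with $s \in (\rho - \eta, \rho)$, the $L^1$-convergence $v^{t^*_\varepsilon}_\varepsilon \to u_{x_0,\nu}$ selects a radius at which the splicing interface has $(d-1)$-measure $o_n(1)$, so that the extra spin cost of the swap, controlled via (H3) by $c^*$ times this measure, is $o(\rho^{d-1})$ after letting $n \to \infty$ and $\eta \to 0$; the spin energy of $u_{x_0,\nu}$ on the remaining annular strip is of order $\eta\rho^{d-2}$ and likewise vanishes. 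Combined with $F_{\varepsilon_n}(u_{\varepsilon_n}, Q^\nu_\rho(x_0)) \to \mu(Q^\nu_\rho(x_0))$ and $\rho \to 0$ along $\mu$-continuity radii, this yields $\varphi(x_0, \nu) \le q(x_0)$. The main technical obstacle is this boundary correction: the interior coarea step is elementary once one uses Cauchy--Schwarz with the optimal parameter $\delta = b-a$, but making the splicing cost negligible forces one to choose the modification radius carefully by an averaged slicing argument, leveraging only (H3) and the $L^1$-convergence of the level-set functions $v^t_\varepsilon$.
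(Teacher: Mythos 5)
Your proposal is essentially correct and takes a slightly different, arguably more streamlined route than the paper. You construct a $\{\pm 1\}$-valued spin competitor $v^t_\varepsilon$ directly by thresholding and use the exact coarea identity
\begin{align*}
\int_a^b E_\varepsilon(v^t_\varepsilon,A)\,\mathrm{d}t = \sum_{\xi,i}\varepsilon^{d-1}c^\varepsilon_{i,\xi}\,\bigl|\,(a,b)\cap\bigl(u_\varepsilon(i)\wedge u_\varepsilon(i+\varepsilon\xi),\,u_\varepsilon(i)\vee u_\varepsilon(i+\varepsilon\xi)\bigr)\bigr|,
\end{align*}
then split broken/intact bonds and apply Young's inequality $c d \le \delta d^2 + c^2/(4\delta)$ with $\delta=b-a$ to absorb the intact contribution into $(b-a)F_\varepsilon$ plus an $O(\rho^d)$ error, which is $O(\rho)$ after the $\rho^{-(d-1)}$-scaling. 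The paper instead constructs a $\{z_1,z_2\}$-valued function $w_\varepsilon$ by thresholding, estimates the \emph{cardinality} $\#I_{t_\varepsilon,\varepsilon}$ of bonds in the thin level-set region via Fubini and Cauchy--Schwarz to get an error of order $\rho^{d-1/2}$, and only at the end passes from $F_\varepsilon$ to $E_\varepsilon$ via the identification $m_{\varepsilon,\eta}^{F_\varepsilon}(u_{x_0,\nu}^{z_1,z_2},\cdot)=m_{\varepsilon,\eta}^{E_\varepsilon}(u_{x_0,\nu},\cdot)$ valid for $|z_1-z_2|>\sqrt{c^*\varepsilon}$. Your approach bypasses that identification, which is a genuine simplification; the boundary-splicing step you sketch is the same discrete De Giorgi slicing as in the paper and works for the same reason.

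There is, however, a real gap in your handling of the selected level $t^*_\varepsilon$. The mean value theorem only gives $t^*_\varepsilon\in(a,b)$, but to conclude $v^{t^*_\varepsilon}_\varepsilon\to u_{x_0,\nu}$ in $L^1(Q^\nu_\rho(x_0))$ (uniformly in $\varepsilon$ across the chosen levels) you need $t^*_\varepsilon$ to stay in a fixed compact subinterval $(a+\delta',b-\delta')$, since $|\{u_\varepsilon>t\}\triangle\{u_0>t\}|$ is controlled by $\frac{1}{\operatorname{dist}(t,\{a,b\})}\int|u_\varepsilon-u_0|$. The paper addresses exactly this by choosing $t_\varepsilon$ with $\operatorname{dist}(\{z_1,z_2\},t_\varepsilon)\ge c>0$. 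The fix in your framework is to restrict the mean-value integral to $(a+\delta',b-\delta')$, getting the prefactor $\tfrac{b-a}{b-a-2\delta'}$ in front of $F_\varepsilon$, and send $\delta'\to0$ at the very end, after $\varepsilon\to0$, $\eta\to0$, $\rho\to0$; the lemma still follows. Also, the inequality you invoke is Young's inequality, not Cauchy--Schwarz, and for intact bonds $|u_\varepsilon(i)-u_\varepsilon(i+\varepsilon\xi)|\le\sqrt{c^*\varepsilon}<b-a$ for $\varepsilon$ small, so the truncation $\wedge(b-a)$ is automatic there — worth stating.
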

\begin{proof}
Fix $x_0 \in S(u)$ and assume $q(x_0) < +\infty$, since otherwise there is nothing to prove. To simplify notation we write $\nu=\nu_u(x_0)$, $\varepsilon=\varepsilon_n$,$u^+(x_0)=z_1,u^-(x_0)=z_2$ and $u_{0}= u_{x_0,\nu}^{u^+(x_0),u^-(x_0)}$.
By the properties of $SBV$ functions and radon measures we have that
\begin{itemize}
\item[a)] $\displaystyle \lim_{\rho \to 0}\fint_{Q^\nu_\rho(x_0)} |u-u_{0}|\mathrm{d}x=0$, 
\item[b)] $\displaystyle q(x_0)= \lim_{\rho \to 0} \frac{\mu(Q_\rho^\nu(x_0))}{\rho^{d-1}} $.
\end{itemize}
for $\mathcal{H}^{d-1}$-a.e. $x_0 \in S(u)$. Thus it suffices to prove Lemma \ref{GammaliminfMS} for points satisfying a) and b).
Fix such a $x_0 \in S(u)$ and $\rho \to 0$ such that $\mu(\partial Q^\nu_\rho(x_0))=0$. By the weak convergence of $\mu_n$ to $\mu$ we have that
\begin{align*}
q(x_0)&= \lim_{\rho \to 0} \frac{\mu(Q_\rho^\nu(x_0))}{\rho^{d-1}} = \lim_{\rho \to 0} \frac{1}{\rho^{d-1}} \lim_{n \to \infty} \mu_n(Q^\nu_\rho(x_0)) \\&= \lim_{\rho \to 0 } \frac{1}{\rho^{d-1}} \lim_{\varepsilon \to 0}\sum_{\xi \in V} \sum_{i \in Z_\varepsilon( Q^\nu_\rho(x_0))} \varepsilon^d W^\varepsilon_{i,\xi}\left(D^\xi_\varepsilon u(i)\right).
\end{align*}
Since the limit exist and is finite we have that for $\rho $ and $\varepsilon $ small enough there holds
\begin{align*}
\sum_{\xi \in V} \sum_{i \in Z_\varepsilon( Q^\nu_\rho(x_0))} \varepsilon^d W^\varepsilon_{i,\xi}\left(D^\xi_\varepsilon u(i)\right) \leq C\rho^{d-1}.
\end{align*}
Fix $\eta >0$ We construct $v_\varepsilon \in \mathcal{PC}_\varepsilon(\mathbb{R}^d;\{z_1,z_2\})$ such that $v_\varepsilon = u_{0}$ on $(Q^\nu_\rho(x_0))_\eta $ and
\begin{align}\label{wboundv} 
F_\varepsilon(v_\varepsilon,Q^\nu_\rho(x_0)) \leq F_\varepsilon(u_\varepsilon,Q^\nu_\rho(x_0)) + o(\rho^{d-1})
\end{align}
Assume without loss of generality, that $z_1 <z_2$ and define for $t \in (z_1,z_2)$ 
\begin{align*}
I_{t,\varepsilon} = \Bigg\{(i,\xi) \in Z_\varepsilon( Q_\nu^\rho(x_0)) \times V, (u_{\varepsilon})_{i+\xi} \wedge (u_{\varepsilon})_i \leq t \leq&  (u_{\varepsilon})_{i+\xi} \vee (u_{\varepsilon})_i,\\& |(u_{\varepsilon})_{i+\xi}-(u_{\varepsilon})_i|\leq \sqrt{c^\varepsilon_{i,\xi} \varepsilon}  \Bigg\}.
\end{align*}
Using Fubini's Theorem and H\"older's Inequality we obtain that there exists $t_\varepsilon \in (z_1,z_2) $ such that $\mathrm{dist}(\{z_1,z_2\},t_\varepsilon)\geq c>0$ and
\begin{align*}
\frac{1}{2}(z_2-z_1)\#I_{t_\varepsilon,\varepsilon} &\leq \int_{z_1}^{z_2} \#I_{t,\varepsilon}\mathrm{d}t = \int_{z_1}^{z_2} \sum_{\xi \in V}\sum_{i \in Z_\varepsilon( Q^\rho_\nu(x))} \mathrm{1}_{I_{t,\varepsilon}}(i,\xi) \mathrm{d}t \\&= \sum_{\xi\in V }\underset{|(u_{\varepsilon})_{i+\xi}-(u_{\varepsilon})_i|\leq \sqrt{c_{i,\xi}^\varepsilon \varepsilon}}{\sum_{i \in Z_\varepsilon( Q^\rho_\nu(x))} } |(u_{\varepsilon})_{i+\xi}-(u_{\varepsilon})_i| \\& \leq \varepsilon^{1-d} \Bigg(\sum_{\xi\in V}\sum_{i \in Z_\varepsilon( Q^\rho_\nu(x))} \varepsilon^d \Bigg)^{\frac{1}{2}} \cdot \Bigg(\sum_{\xi\in V}\underset{|(u_{\varepsilon})_{i+\xi}-(u_{\varepsilon})_i|\leq \sqrt{c^\varepsilon_{i,\xi} \varepsilon}}{\sum_{i \in Z_\varepsilon( Q^\rho_\nu(x))} } \varepsilon^d \left| D^\xi_\varepsilon u(i)\right|^2 \Bigg)^{\frac{1}{2}}\\&\leq C\varepsilon^{1-d} \rho^{\frac{d}{2}} F_\varepsilon(u_\varepsilon,Q_\nu^\rho(x))^{\frac{1}{2}} \left(\#V\right)^{\frac{1}{2}} \leq C\varepsilon^{1-d} \rho^{d-\frac{1}{2}}\left(\#V\right)^{\frac{1}{2}}
\end{align*}
Now defining $w_\varepsilon \in \mathcal{PC}_\varepsilon(\mathbb{R}^d;\{z_1,z_2\})$ by
\begin{align*}
w_\varepsilon(i) = (z_2-z_1) \mathrm{1}_{\{v_i > t_\varepsilon\}}(i) + z_1
\end{align*}
we have that 
\begin{align*}
 &F_\varepsilon(w_\varepsilon,Q^\nu_\rho(x_0))= \sum_{\xi \in V} \sum_{i \in Z_\varepsilon( Q_\nu^\rho(x))} \varepsilon^d W^{i,\xi}_\varepsilon\left(D^\xi_\varepsilon w(i)\right) \\=& \sum_{\xi \in V} \underset{(i,\xi) \in I_{t_\varepsilon,\varepsilon}}{\sum_{i \in Z_\varepsilon( Q_\nu^\rho(x))}} \varepsilon^d W^{i,\xi}_\varepsilon\left(D^\xi_\varepsilon w(i)\right) + \sum_{\xi \in V} \underset{(i,\xi) \notin I_{t_\varepsilon,\varepsilon}}{\sum_{i \in Z_\varepsilon( Q_\nu^\rho(x))}} \varepsilon^d W^{i,\xi}_\varepsilon\left(D^\xi_\varepsilon w(i)\right) \\ \leq& C \rho^{d-\frac{1}{2}}\left(\#V\right)^{\frac{1}{2}} +\sum_{\xi \in V} \underset{(i,\xi) \notin I_{t_\varepsilon,\varepsilon}}{\sum_{i \in Z_\varepsilon ( Q_\nu^\rho(x))}} \varepsilon^d W^{i,\xi}_\varepsilon\left(D^\xi_\varepsilon w(i)\right). 
\end{align*}
Now if $(i,\xi) \notin I_{t_\varepsilon,\varepsilon}$ we have either $ t_\varepsilon \notin ((u_{\varepsilon})_{i+\xi} \wedge (u_{\varepsilon})_i, (u_{\varepsilon})_{i+\xi} \vee (u_{\varepsilon})_i)$ or $|(u_{\varepsilon})_{i+\xi} - (u_{\varepsilon})_i| > \sqrt{c_{i,\xi}\varepsilon}$. In either case we have that
\begin{align*}
 W^{i,\xi}_\varepsilon\left(D^\xi_\varepsilon w(i)\right) \leq  W^{i,\xi}_\varepsilon\left(D^\xi_\varepsilon u(i)\right).
\end{align*}
By $u_\varepsilon \to u$ in $L^1(Q^\nu_\rho(x_0))$  and a) we have that
\begin{align*}
 \lim_{\rho \to 0}\lim_{\varepsilon \to 0}\fint_{Q^\nu_\rho(x_0)} |u_\varepsilon-u_{0}|\mathrm{d}x =0.
\end{align*}
Now
\begin{align*}
\fint_{Q^\nu_\rho(x_0)} |w_\varepsilon-u_{x_0,\nu}|\mathrm{d}x &= \frac{|z_2-z_1|}{\rho^d} \left( \left|\left\{ u_\varepsilon > t_\varepsilon\right\} \cap \left\{ u_{0}=z_1\right\}\right| +  \left|\left\{ u_\varepsilon \leq t_\varepsilon\right\} \cap \left\{ u_{0}=z_2\right\}\right| \right) \\& \leq \frac{|z_2-z_1|}{\rho^d} \left| \left\{|u_\varepsilon-u_{0}|>c\right\} \right| \leq \frac{|z_2-z_1|}{c\rho^d}\fint_{Q^\nu_\rho(x_0)} |u_\varepsilon-u_{0}|\mathrm{d}x
\end{align*}
and therefore 
\begin{align}\label{wconvergence}
 \lim_{\rho \to 0}\lim_{\varepsilon \to 0}\fint_{Q^\nu_\rho(x_0)} |w_\varepsilon-u_{0}|\mathrm{d}x =0.
\end{align}
We now construct $v_\varepsilon \in \mathcal{PC}_\varepsilon(\mathbb{R}^d;\{z_1,z_2\})$ such that $v_\varepsilon =u_{0}$ on $(\partial Q^\nu_\rho(x_0))_\eta$ and 
\begin{align}\label{vlessthanw}
F_\varepsilon(v_\varepsilon,Q^\nu_\rho(x_0)) \leq F_\varepsilon(w_\varepsilon,Q^\nu_\rho(x_0)) + o(\rho^{d-1}).
\end{align}
To this end let $R=\sup_{\xi \in V} |\xi|$, $K_\varepsilon^\delta = \lfloor \frac{\delta\rho}{3R\varepsilon}\rfloor \in \mathbb{N}$ and for $k \in \{0,\ldots,3K_\varepsilon^\delta-1\}$ we set $Q_k = Q_{R\varepsilon k +\eta}^\nu(x_0)$ and $S_k = Q_{3R\varepsilon (k+1) +\eta}^\nu(x_0)\setminus \overline{Q}_{3R\varepsilon k+\eta}^\nu(x_0)$. For any $\varepsilon >0$ there exists $k_\varepsilon\in\{0,\ldots,3K_\varepsilon^\delta-1\}$  such that
\begin{align}\label{cardinalityestimate}\begin{split}
C \int_{Q^\nu_\rho(x_0)}|w_\varepsilon -u_{0}|\mathrm{d}x &= \sum_{k=0}^{K^\delta_\varepsilon-1} \int_{S_k}|w_\varepsilon -u_{0}|\mathrm{d}x \geq  K^\delta_\varepsilon \sum_{i \in Z_\varepsilon( S_{k_\varepsilon})} \varepsilon^d|(w_\varepsilon)_i-(u_{0})_i|  \\&\geq \varepsilon^d |z_2-z_1| K^\delta_\varepsilon \#\{i \in \varepsilon\mathbb{Z}^d \cap  S_{k_\varepsilon} : (w_\varepsilon)_i \neq (u_{0})_i \}.
\end{split}
\end{align}
Now defining 
\begin{align*}
v_\varepsilon(i)=\begin{cases} w_\varepsilon(i) &i \in Z_\varepsilon( Q^\nu_\rho(x_0) \setminus \overline{Q}_{ (3k_\varepsilon+1) }) \\
u_{0}(i) &\text{otherwise}
\end{cases}
\end{align*}
we have for $\varepsilon >0$ small enough
\begin{align*}
F_\varepsilon(v_\varepsilon,Q^\nu_\rho(x_0)) = \sum_{\xi \in V} \sum_{i \in Z_\varepsilon( Q^\nu_\rho(x_0))}\varepsilon^d W^\varepsilon_{i,\xi}\left(D^\xi_\varepsilon v(i)\right) = \sum_{i=1}^4 I_\varepsilon^i
\end{align*}
where
\begin{align*}
\sum_{i=1}^4 I_\varepsilon^i&=\sum_{\xi \in V}\underset{i+\varepsilon\xi \in Z_\varepsilon( Q^\nu_\rho(x_0) \setminus \overline{Q}_{3k_\varepsilon+1}) }{\sum_{i \in Z_\varepsilon( Q^\nu_\rho(x_0) \setminus \overline{Q}_{3k_\varepsilon+1})}} \varepsilon^d W^\varepsilon_{i,\xi}\left(D^\xi_\varepsilon v(i)\right)+ \sum_{\xi \in V}\underset{i+\varepsilon\xi \notin Z_\varepsilon( Q^\nu_\rho(x_0) \setminus \overline{Q}_{3k_\varepsilon+1}) }{\sum_{i \in Z_\varepsilon( Q^\nu_\rho(x_0) \setminus \overline{Q}_{3k_\varepsilon+1})}} \varepsilon^d W^\varepsilon_{i,\xi}\left(D^\xi_\varepsilon v(i)\right) \\&\quad+ \sum_{\xi \in V}\underset{i+\varepsilon\xi \in Z_\varepsilon( Q^\nu_\rho(x_0) \setminus \overline{Q}_{3k_\varepsilon+1}) }{\sum_{i \notin Z_\varepsilon( Q^\nu_\rho(x_0) \setminus \overline{Q}_{3k_\varepsilon+1})}} \varepsilon^d W^\varepsilon_{i,\xi}\left(D^\xi_\varepsilon v(i)\right) + \sum_{\xi \in V}\underset{i+\varepsilon\xi \in Z_\varepsilon( \overline{Q}_{3k_\varepsilon+1}) }{\sum_{i \in Z_\varepsilon( \overline{Q}_{3k_\varepsilon+1})}} \varepsilon^d W^\varepsilon_{i,\xi}\left(D^\xi_\varepsilon v(i)\right) 
\end{align*}
For the first term we have
\begin{align} \label{I1estimate}
I_\varepsilon^1 \leq F_\varepsilon(w_\varepsilon,Q^\nu_\rho(x_0)),
\end{align}
since $w_\varepsilon=v_\varepsilon$ in $Q^\nu_\rho(x_0) \setminus \overline{Q}_{3k_\varepsilon+1}$. For the second term and third term we we have, using (\ref{cardinalityestimate}), 
\begin{align}\label{I2I3estimate}
I_\varepsilon^2 +I^3_\varepsilon \leq O(\varepsilon) +  \frac{C \# V}{\delta\rho} \int_{Q^\nu_\rho(x_0)}|w_\varepsilon- u_{x_0,\nu}|\mathrm{d}x
\end{align}
and, since $v_\varepsilon = u_{x_0,\nu}$ on $\overline{Q}_{3k_\varepsilon+1}$ we have
\begin{align} \label{I4estimate}
I_\varepsilon^4 = \sum_{\xi \in V}\underset{i+\varepsilon\xi \in Z_\varepsilon( \overline{Q}_{3k_\varepsilon+1}) }{\sum_{i \in Z_\varepsilon( \overline{Q}_{3k_\varepsilon+1})}} \varepsilon^d W^\varepsilon_{i,\xi}\left(D^\xi_\varepsilon u_{x_0,\nu}(i)\right) \leq C \eta \rho^{d-2}.
\end{align}
Noting that $\eta <<\rho$, using (\ref{wconvergence}) and (\ref{I1estimate})-(\ref{I4estimate}) we obtain (\ref{vlessthanw}). Note that we have  
\begin{align*}
F_\varepsilon(v_\varepsilon,Q^\nu_\rho(x_0)) \geq m_{\varepsilon,\eta}^{F_\varepsilon}(u_{0},Q^\nu_\rho(x_0))
\end{align*}
and
\begin{align}\label{infeq}
m_{\varepsilon,\eta}^{F_\varepsilon}(u_{0},Q^\nu_\rho(x_0)) = m_{\varepsilon,\eta}^{E_\varepsilon}(u_{x_0,\nu},Q^\nu_\rho(x_0)) 
\end{align}
for all $z_1,z_2 \in \mathbb{R}, z_1\neq z_2$ and $\varepsilon>0$ small enough. (Exactly when $2 \wedge |z_1-z_2| > \sqrt{c^*\varepsilon}$). Now using (\ref{wboundv}), dividing by $\rho^{d-1}$ sending $\varepsilon \to 0$, $\eta \to 0$ and $\rho \to 0$ the definition of $\varphi(x_0,\nu_u(x_0))$ and $q(x_0)$ we obtain the claim.
\end{proof}
\begin{lemma}\label{LiminfBulkLemma}
\begin{align*}
g(x_0) \geq f(x_0,\nabla u(x_0)) \text{ for } \mathcal{L}^d \text{-a.e. } x \in \Omega.
\end{align*}
\end{lemma}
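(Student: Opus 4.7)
The plan is to carry out a Fonseca--Müller-type blow-up at a Lebesgue point of $\mu$, paralleling the structure of Lemma \ref{LiminfSurfaceLemma} but at the bulk scale. I would first fix $x_0 \in \Omega \setminus S(u)$ enjoying the following $\mathcal{L}^d$-a.e.\ properties:
\begin{itemize}
\item $u$ is approximately differentiable at $x_0$ with $\zeta := \nabla u(x_0)$, so that $\fint_{Q^\nu_\rho(x_0)}|u(x) - \ell(x)|\,\mathrm{d}x = o(\rho)$ with $\ell(x) := u(x_0) + \zeta \cdot (x - x_0)$;
\item $g(x_0) = \lim_{\rho \to 0}\mu(Q^\nu_\rho(x_0))/\rho^d$ is finite, $\mu_s$ has zero $\mathcal{L}^d$-density at $x_0$, and $\mathcal{H}^{d-1}(S(u) \cap Q^\nu_\rho(x_0)) = o(\rho^d)$.
\end{itemize}
For generic $\rho$ with $\mu(\partial Q^\nu_\rho(x_0)) = 0$, the weak-$\ast$ convergence $\mu_n \overset{*}{\rightharpoonup} \mu$ then yields
\[g(x_0) = \lim_{\rho \to 0}\rho^{-d}\lim_{n\to\infty}F_{\varepsilon_n}(u_{\varepsilon_n}, Q^\nu_\rho(x_0)).\]

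Next, I would construct admissible competitors for $m^{F_{\varepsilon_n}}_{\varepsilon_n, \eta}(\zeta\cdot, Q^\nu_\rho(x_0))$. By the translation invariance of $F_{\varepsilon_n}$ in the scalar field, the boundary condition $v(i) = \zeta\cdot i$ may equivalently be posed as $v(i) = \ell(i)$. For fixed $\eta > 0$, pick $\delta \in (\eta, \rho/4)$ and subdivide the annular transition region $Q^\nu_{\rho-\eta}(x_0) \setminus Q^\nu_{\rho-\delta}(x_0)$ into $K_{\varepsilon_n} = \lfloor(\delta-\eta)/(3R\varepsilon_n)\rfloor$ concentric shells $S_k$ of thickness $3R\varepsilon_n$, exactly as in the proof of Lemma \ref{LiminfSurfaceLemma}. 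For each $k$, let $v^k_{\varepsilon_n}$ equal $u_{\varepsilon_n}$ on lattice points inside the inner boundary of $S_k$ and equal $\ell$ outside; a pigeonhole choice yields a shell $k^\ast$ whose gluing cost $G_{k^\ast}$ is bounded by the average $K_{\varepsilon_n}^{-1}\sum_k G_k$. Setting $v_{\varepsilon_n} := v^{k^\ast}_{\varepsilon_n}$, one obtains an admissible competitor with
\[m^{F_{\varepsilon_n}}_{\varepsilon_n, \eta}(\zeta\cdot, Q^\nu_\rho(x_0)) \leq F_{\varepsilon_n}(u_{\varepsilon_n}, Q^\nu_\rho(x_0)) + C|\zeta|^2 \delta \rho^{d-1} + G_{k^\ast}.\]

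The hard part is to show that $G_{k^\ast}$ becomes negligible after dividing by $\rho^d$ and letting $n \to \infty$, $\eta \to 0$, $\rho \to 0$. Each gluing bond satisfies $W^{\varepsilon_n}_{i,\xi}(D^\xi v_{\varepsilon_n}(i)) \leq \min\bigl((u_{\varepsilon_n}(i) - \ell(i))^2/\varepsilon_n^2,\, c^\ast/\varepsilon_n\bigr)$, and I would split bonds according to whether $|u_{\varepsilon_n}(i) - \ell(i)|$ exceeds $\sqrt{c^\ast\varepsilon_n}$. The ``broken-spring'' bonds are the discrete analogue of the jump set: their total contribution is bounded by $C\,\mathcal{H}^{d-1}(S(u) \cap Q^\nu_\rho(x_0)) = o(\rho^d)$ using the vanishing jump-set density at $x_0$. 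The ``elastic'' bonds are controlled via an averaged quadratic estimate combined with the approximate-differentiability bound $\int_{Q^\nu_\rho(x_0)}(u - \ell)^2\,\mathrm{d}x = o(\rho^{d+1})$. The technical subtlety with no counterpart in the surface argument---where the two-valued reduction circumvents any such delicate control---is that the naive quadratic estimate generates a diverging $1/\varepsilon_n$ factor; its resolution rests on exploiting the freedom, guaranteed by the ordering of the limits in (\ref{Definition f}), of sending $\varepsilon_n \to 0$ (whence $K_{\varepsilon_n} \to \infty$) at fixed $\eta > 0$, together with a discrete Poincaré-type inequality on each shell $S_k$ (after optimally shifting $\ell$ by a constant $c_k$) to bound $\int_{S_k}(u_{\varepsilon_n}-\ell_k)^2\,\mathrm{d}x$ by $C\varepsilon_n^2\, F_{\varepsilon_n}(u_{\varepsilon_n}, S_k)$ plus a controlled error from broken bonds inside $S_k$. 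Once this is done, the averaging yields $G_{k^\ast}^{\mathrm{el}} \leq (C/K_{\varepsilon_n})F_{\varepsilon_n}(u_{\varepsilon_n}, Q^\nu_\rho(x_0)) = O(\varepsilon_n/\eta)\cdot g(x_0)\rho^d$, which vanishes as $n \to \infty$, and the three-step limit delivers $g(x_0) \geq f(x_0, \zeta)$.
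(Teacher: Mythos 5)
Your blow-up setup is exactly the paper's, but the construction of the competitor diverges substantially, and the key step in your version does not go through. The paper \emph{first} truncates $u_\varepsilon$ to the range $[m_\rho(x_0),M_\rho(x_0)]$ of the affine function $u_0(x)=u(x_0)+\nabla u(x_0)(x-x_0)$, producing $u_\varepsilon^\rho$ with the pointwise bound $|u_\varepsilon^\rho - u_0|\leq C\rho$ on $Q^\nu_\rho(x_0)$; it \emph{then} glues with a \emph{smooth} cut-off $\varphi_k$ whose gradient is of order $K/(\rho\delta)$ (not $1/\varepsilon$), using the elementary subadditivity $\tfrac{1}{\varepsilon}\wedge(a+b)^2\leq 2(\tfrac1\varepsilon\wedge a^2)+2(\tfrac1\varepsilon\wedge b^2)$ together with $W^\varepsilon_{i,\xi}(z)\leq z^2$. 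The crucial estimate (\ref{intbound}) is $|u_\varepsilon^\rho(i)-u_0(i)|^2\leq C\rho\,|u_\varepsilon^\rho(i)-u_0(i)|$, which turns the quadratic cut-off error into $C(K/\delta)^2\,\fint_{Q^\nu_\rho}|u_\varepsilon^\rho-u_0|/\rho\,\mathrm{d}x\cdot\rho^d=o(\rho^d)$ by approximate differentiability; no $1/\varepsilon$ ever appears. Averaging is over a \emph{fixed} number $K$ of smooth cut-offs of width $\sim\rho\delta/K$, with $K\to\infty$ only at the very end. Your proposal instead copies the sharp $\varepsilon$-shell gluing from the surface lemma, which there works only because $w_\varepsilon$ has been reduced to two values and every crossing mismatch is a single broken bond counted via the $L^1$ distance; with a real-valued $u_\varepsilon$ this counting collapses.

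Two concrete gaps in your argument. First, the claimed discrete Poincar\'e bound $\int_{S_k}(u_\varepsilon-\ell_k)^2\lesssim \varepsilon_n^2\,F_\varepsilon(u_\varepsilon,S_k)$ is false: $S_k$ is a shell of thickness $\sim\varepsilon_n$ but of tangential diameter $\sim\rho$, so the Poincar\'e constant after optimally choosing $c_k$ is of order $\rho^2$, not $\varepsilon_n^2$ (the function can vary of order one in the tangential directions at fixed radial layer). Even granting the shift $c_k$, you would then have to remove it to restore the prescribed boundary datum $\ell$, which your sketch does not address. Second, you never truncate $u_\varepsilon$, so a crossing bond with $|u_{\varepsilon_n}(i)-\ell(i)|>\sqrt{c^*\varepsilon_n}$ is \emph{not} tied to the jump set of the limit $u$; large pointwise discrepancies between $u_{\varepsilon_n}$ and $\ell$ occur off $S(u)$ and are controlled only through $\|u_{\varepsilon_n}-\ell\|_{L^1}$, not through $\mathcal{H}^{d-1}(S(u)\cap Q^\nu_\rho(x_0))$. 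Hence your bound $o(\rho^d)$ on the broken bonds is unjustified. Both obstacles are precisely what the paper's truncation-plus-smooth-cut-off construction is designed to avoid; you should adopt that device rather than try to push the sharp $\varepsilon$-shell argument into the bulk regime.
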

\begin{proof} Let $x_0 \in \Omega$ be a point of approximate differentiability of $u$. Set $u_0 = u(x_0) + \nabla u(x_0)(x-x_0)$. By \cite{ambrosio2000functions}, Theorem 3.83, we have that this property is satisfied for a.e. $x_0 \in \Omega$ and we have
\begin{itemize}
\item[a)]
$ \displaystyle
\lim_{\rho \to 0} \fint_{Q^\nu_\rho(x_0)} \frac{\left|u(x)-u_0(x) \right|}{\rho}\mathrm{d}x=0.
$
\item[b)] $g(x_0)=\displaystyle \lim_{\rho \to 0} \frac{\mu(Q^\nu_\rho(x_0))}{\rho^d} $.
\end{itemize}
It suffices to prove the inequality for points in $\Omega$ satisfying a) and b) with $g(x_0) < +\infty$. Fix such a $x_0 \in \Omega$ and a sequence $\rho \to 0$ such that $\mu(\partial Q^\nu_\rho(x_0))=0$. By the weak convergence of $\mu_n $ to $\mu$ we have that
\begin{align*}
g(x_0) &= \lim_{\rho \to 0} \frac{\mu(Q^\nu_\rho(x_0))}{\rho^d} = \lim_{\rho \to 0} \frac{1}{\rho^d}  \lim_{n \to \infty} \mu_n(Q^\nu_\rho(x_0)) \\&= \lim_{\rho \to 0 } \frac{1}{\rho^{d}} \lim_{\varepsilon \to 0}\sum_{\xi \in V} \sum_{i \in Z_\varepsilon( Q^\nu_\rho(x_0))} \varepsilon^d W^\varepsilon_{i,\xi}\left(D^\xi_\varepsilon u(i)\right).
\end{align*}
Since the limit exist and are finite we have that for $\rho $ and $\varepsilon $ small enough there holds
\begin{align}\label{Energybound}
F_\varepsilon(u_\varepsilon,Q^\nu_\rho(x_0))\leq C\rho^{d}.
\end{align}
Set
$\displaystyle  M_\rho(x_0) =\max_{x\in Q^\nu_\rho(x_0)}u_0(x)$ and $\displaystyle m_\rho(x_0) = \min_{x\in Q^\nu_\rho(x_0)}u_0(x)$. Furthermore we define
\begin{align*}
u_\varepsilon^\rho(x) = \left( u_\varepsilon(x) \vee M_\rho(x_0)\right) \wedge m_\rho(x_0).
\end{align*}
Note that $M_\rho(x_0) -m_\rho(x_0) \leq C\rho$,  $\left|u^\rho_\varepsilon(x)-u_0(x)\right| \leq C\rho$
and by using a) and $u_\varepsilon^\rho \to u $ in $L^1(\Omega)$ we have that
\begin{align}\label{int0}
\lim_{\rho \to 0}\lim_{\varepsilon \to 0} \fint_{Q^\nu_\rho(x_0)} \frac{\left|u^\rho_\varepsilon(x)-u_0(x)\right|}{\rho}\mathrm{d}x=0
\end{align}
and since truncation lowers the energy we have that
\begin{align*}
F_\varepsilon(u_\varepsilon^\rho,Q^\nu_\rho(x_0)) \leq F_\varepsilon(u_\varepsilon,Q^\nu_\rho(x_0)).
\end{align*}
Fix $K\in \mathbb{N}$, $\delta>0$ and define for $k \in \{ K,\ldots,2K-1\}$ a cut-off function $\varphi_k \in C^\infty(\mathbb{R}^d)$ between $ (Q^\nu_\rho(x_0))^+_{\frac{k+1}{K}\rho\delta} $ and $ (Q^\nu_\rho(x_0))^+_{\frac{k}{K}\rho\delta}$, i.e. 
\begin{align*}
||\nabla \varphi||_\infty \leq \frac{CK}{\rho\delta}, (Q^\nu_\rho(x_0))^+_{\frac{k+1}{K}\rho\delta} \subset\{\varphi=1\},\mathrm{supp}(\varphi_k) \subset (Q^\nu_\rho(x_0))^+_{\frac{k}{K}\rho\delta}.
\end{align*}
For $k \in \{K,\ldots,2K-1\}$ we define  $w_{\varepsilon,\rho}^k \in \mathcal{PC}_\varepsilon(\Omega)$ by
\begin{align*}
w_{\varepsilon,\rho}^k(i) = \varphi_k(i)u_\varepsilon^\rho(i) + (1-\varphi_k(i))u_0(i).
\end{align*}
Note that
\begin{align} \label{decompositiongradient}
D^\xi_\varepsilon w_{\varepsilon,\rho}^k(i)=\varphi_k(i) D^\xi_\varepsilon u^\rho_\varepsilon(i) + (1-\varphi_k(i))\nabla u(x_0)\xi + D^\xi_\varepsilon \varphi_k(i)(u^\rho_\varepsilon(i+\varepsilon\xi)-u_0(i))
\end{align}
and for $a,b \geq 0$ we have that
\begin{align}\label{Potentialsubadd}
\frac{1}{\varepsilon} \wedge (a+b)^2 \leq 2\left((\frac{1}{\varepsilon}\wedge a^2)+ (\frac{1}{\varepsilon}\wedge b^2)\right).
\end{align}
Setting $ \displaystyle R = \max_{\xi \in V} ||\xi||_\infty$, $S_{k,\varepsilon} = (Q^\nu_\rho(x_0))_{\frac{k+1}{K}\rho +R\varepsilon}^+\setminus  (Q^\nu_\rho(x_0))_{\frac{k}{K}\rho -R\varepsilon}^+ $, splitting the energy into three contributions, the set where $\varphi_k(i),\varphi(i+\xi)=1 $, the set where $\varphi_k(i),\varphi(i+\xi)=0$ and the set where neither holds true, we obtain
\begin{align*}
F_\varepsilon(w^k_{\varepsilon,\rho},Q^\nu_\rho(x_0)) \leq &\sum_{\xi \in V} \sum_{i \in  Z_\varepsilon\left( (Q^\nu_\rho(x_0))_{\frac{k+1}{K}\rho\delta +R\varepsilon}^+\right) } \varepsilon^d  W^\varepsilon_{i,\xi}\left(D^\xi_\varepsilon w^k_{\varepsilon,\rho}(i)\right) \\+& \sum_{\xi \in V} \sum_{i \in Z_\varepsilon( S_{k,\varepsilon})}\varepsilon^d W^\varepsilon_{i,\xi}\left(D^\xi_\varepsilon w^k_{\varepsilon,\rho}(i)\right) \\+& \sum_{\xi \in V} \sum_{i \in Z_\varepsilon\left((Q^\nu_{\rho}(x_0))_{\frac{k}{K}\rho\delta -R\varepsilon}\right)} \varepsilon^d W^\varepsilon_{i,\xi}\left(D^\xi_\varepsilon w^k_{\varepsilon,\rho}(i)\right).
\end{align*}
For the first term we have, since $w_{\varepsilon,\rho}^k = u^\rho_\varepsilon$ for all $i,i+\varepsilon\xi$ that appear in the sum,
\begin{align} \label{Bulkbound}
F_\varepsilon(w^k_{\varepsilon,\rho}, (Q^\nu_\rho(x_0))^+_{\frac{k+1}{K}\delta\rho +\varepsilon R}) \leq F_\varepsilon(u_\varepsilon^\rho,Q^\nu_\rho(x_0)).
\end{align}
For the second term we have, noting (\ref{decompositiongradient}), (\ref{Potentialsubadd}), the definition of $\varphi_k$, 
\begin{align*}
|D^\xi_\varepsilon w^k_{\varepsilon,\rho}(i)| \leq |D^\xi_\varepsilon u^\rho_\varepsilon(i)|+|D^\xi_\varepsilon u_0(i)| + |D^\xi_\varepsilon \varphi_k(i)| |u_\varepsilon^\rho(i+\xi) - u_0(i+\xi)|
\end{align*}
and $W^\varepsilon_{i,\xi}(z) \leq z^2$,
\begin{align}\label{Stripebound}
\begin{split}
F_\varepsilon(w^k_{\varepsilon,\rho},S_{k,\varepsilon}) \leq &C\left( F_\varepsilon(u^\rho_\varepsilon,S_{k,\varepsilon}) + F_\varepsilon(u_0(i),S_{k,\varepsilon})\right) \\ +& \frac{CK^2}{\rho^2\delta^2}\sum_{i\in Z_\varepsilon( Q^\nu_\rho(x_0))}\varepsilon^d |u_\varepsilon^\rho(i) - u_0(i)|^2.
\end{split}
\end{align}
By the definition of $u_\varepsilon^\rho$ we have that
\begin{align*}
 |u_\varepsilon^\rho(i) - u_0(i)|^2 \leq C\rho |u_\varepsilon^\rho(i) -u_0(i)|
\end{align*}
for all $i \in \varepsilon\mathbb{Z}^d \cap Q^\nu_\rho(x_0)$.
and therefore 
\begin{align}\label{intbound}
\frac{CK^2}{\rho^2\delta^2}\sum_{i\in Z_\varepsilon( Q^\nu_\rho(x_0))}\varepsilon^d |u_\varepsilon^\rho(i) - u_0(i)|^2 \leq C\frac{K^2}{\delta^2} \int_{Q^\nu_\rho(x_0)} \frac{\left|u^\rho_\varepsilon(x)-u_0(x) \right|}{\rho}\mathrm{d}x.
\end{align}
The third term can be estimated by
\begin{align*}
\sum_{\xi \in V} \sum_{i \in Z_\varepsilon\left((Q^\nu_{\rho}(x_0))_{\frac{k}{K}\rho -R\varepsilon}\right)} \varepsilon^d W^\varepsilon_{i,\xi}\left(D^\xi_\varepsilon w^k_{\varepsilon,\rho}(i)\right) \leq \sum_{\xi \in V} \sum_{i \in Z_\varepsilon\left( (Q^\nu_{\rho}(x_0))_{\frac{k}{K}\delta\rho -R\varepsilon}\right)}\varepsilon^d  |D^\xi_\varepsilon u_0(i)|^2  \leq C\delta\rho^{d-1}
\end{align*}
Note that for $\varepsilon >0$ small enough $S_{k,\varepsilon} \cap S_{j,\varepsilon}=\emptyset$ for all $|k-j| \geq 2$ and therefore, averaging over $k \in \{K,\ldots 2K-1\}$, we obtain
\begin{align}\label{averagebound}
\begin{split}
\frac{1}{K}\sum_{k=1}^K F_\varepsilon(w^k_{\varepsilon,\rho},Q^\nu_\rho(x_0)) &\leq F_\varepsilon(u_\varepsilon^\rho,Q^\nu_\rho(x_0)) + \frac{C}{K}\left( F_\varepsilon(u^\rho_\varepsilon,Q^\nu_\rho(x_0)) + F_\varepsilon(u_0(i),Q^\nu_\rho(x_0))\right) \\&+ C\frac{K}{\delta^2} \int_{Q^\nu_\rho(x_0)} \frac{\left|u^\rho_\varepsilon(x)-u_0(x) \right|}{\rho}\mathrm{d}x + o(\rho^{d})\\&\leq  F_\varepsilon(u_\varepsilon^\rho,Q^\nu_\rho(x_0)) +  \frac{C}{K}\rho^d + C\left(\frac{K}{\delta^2}+1\right) o(\rho^d)
\end{split}
\end{align}
where we used (\ref{Energybound}), (\ref{int0}) and (\ref{Bulkbound})-(\ref{intbound}). Now choosing $k(\varepsilon) \in \{K,\ldots2K-1\}$ such that 
\begin{align*}
 F_\varepsilon(w^{k(\varepsilon)}_{\varepsilon,\rho},Q^\nu_\rho(x_0))\leq\frac{1}{K}\sum_{k=1}^K F_\varepsilon(w^k_{\varepsilon,\rho},Q^\nu_\rho(x_0)).
\end{align*} 
Now dividing by $\rho^d$, sending $\varepsilon \to 0$,$\rho \to 0$ and $K \to \infty$ we have that
\begin{align}\label{limineq}
\lim_{K\to \infty}\lim_{\rho \to 0} \frac{1}{\rho^d}\lim_{\varepsilon\to 0}F_\varepsilon(w^{k(\varepsilon)}_{\varepsilon,\rho},Q^\nu_\rho(x_0)) \leq \lim_{\rho \to 0} \frac{1}{\rho^d} \lim_{\varepsilon \to 0} F_\varepsilon(u_\varepsilon,Q^\nu_\rho(x_0)) = g(x_0).
\end{align}
Now note that for fixed $\varepsilon,\rho>0$ and $k \in \{K,\ldots,2K-1\}$ we have that if $\eta < \rho\delta$ it holds $w^{k}_{\varepsilon,\rho}(i)= u_0(i) $ for all $i \in Z_\varepsilon\left( (Q^\nu_\rho(x_0))_\eta\right)$ and therefore, by noting that
\begin{align*}
F_\varepsilon(v+c,Q^\rho_\nu(x)) = F_\varepsilon(v,Q^\rho_\nu(x))
\end{align*} 
for all $c \in \mathbb{R}$, we have
\begin{align*}
m_{\varepsilon,\eta}^{F_\varepsilon}(\nabla u(x_0)\cdot,Q_\rho^\nu(x_0))\leq F_\varepsilon(w^{k(\varepsilon)}_{\varepsilon,\rho},Q^\nu_\rho(x_0)).
\end{align*}
Noting (\ref{limineq}) the claim follows.
\end{proof}

Now we introduce some notation in order to prove the limsup inequality. This is done in two steps - In the first step we use a density argument to reduce to a smooth class of functions (defined in the following) and in the second step we use the cell-formulas to construct a recovery sequence for that class.

\medskip

Let $R\subset\subset \Omega$ be a $(d-1)$-dimensional compact $C^1$ manifold with $C^1$ boundary. For $\mathcal{H}^{d-1}$-a.e. $x_0 \in R$, $\rho >0, \nu = \nu_R(x_0) \in S^{d-1}$ there exists $f: \mathbb{R}^{d-1} \to \mathbb{R}  $ such that after rotation, writing $x = (x^{\prime},x_d)$, we have
\begin{align}\label{manifold}
R \cap &Q_\rho(x_0) \subset \{(x^{\prime},x_d) \in Q_\rho(x_0) : x_d = f(x^{\prime}) \}.
\end{align}
and we set
\begin{align} \label{Cubepm}
Q^\pm_\rho(x_0)= \{(x^{\prime},x_d) \in Q_\rho(x_0) : \pm x_d > f(x^{\prime}) \} \subset \Omega.
\end{align}
The functions in $\mathcal{D}_2(\Omega)$, which we prove to be dense, are functions that except for a finite union of $C^1$-manifolds $M$ are smooth up to the boundary of $\Omega\setminus M$ and may only jump along $M$. We strongly make use of \cite{de2016approximation}, where the main approximation result, that we use, is stated. However if we localize at a point $x_0\in S(u)$ we further need the property that our functions are $C^\infty$ up to the boundary of $Q^\pm_\rho(x_0)$.
We define $\mathcal{D}_2(\Omega) \subset SBV^2(\Omega)$ by
\begin{align} \label{D2def}
\mathcal{D}_2(\Omega)= \Bigg\{&u \in SBV^2(\Omega) \cap L^\infty(\Omega) : \,\exists \,  M \text{ finite union of compact } C^1\text{-manifolds with}\\\nonumber &C^1\text{-boundary}, M \subset\subset \Omega,  S(u) \subset M, \mathcal{H}^{d-1}(M\setminus S(u))=0, u \in C^\infty( \Omega \setminus  M) \\&\nonumber\text{ and for } \mathcal{H}^{d-1}\text{-a.e. } x \in S(u) \text{ and } \rho >0 \text{ small enough }  u_j \in C^\infty(\overline{Q_\rho^\pm(x_0)}) \Bigg\}
\end{align}

\begin{lemma}[Approximation Lemma]\label{ApproximationLemma} Let $\Omega \subset \mathbb{R}^d$ be an open, bounded and Lipschitz set and let $u \in SBV^2(\Omega)\cap L^\infty(\Omega)$. Then there exists a sequence of functions $\{u_j\}_j \subset \mathcal{D}_2(\Omega)$ such that
\begin{align*}
||u_j-u||_{BV(\Omega)} \to 0,\quad  \nabla u_j \to \nabla u \text{ in } L^2(\Omega;\mathbb{R}^d), \quad \mathcal{H}^{d-1}(S(u_j)\triangle S(u)) \to 0.
\end{align*}
\end{lemma}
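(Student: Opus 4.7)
The plan is to obtain the approximating sequence in two stages: first invoke the main density theorem of \cite{de2016approximation} to produce a sequence with the required convergences and with piecewise $C^\infty$ structure away from a $C^1$-jump set, then locally refine this sequence to upgrade interior smoothness to one-sided smoothness up to the jump set.

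In the first step I apply the cited result to the given $u \in SBV^2(\Omega) \cap L^\infty(\Omega)$ and obtain $\{v_j\} \subset SBV^2(\Omega) \cap L^\infty(\Omega)$ with $\|v_j - u\|_{BV} \to 0$, $\nabla v_j \to \nabla u$ in $L^2$ and $\mathcal{H}^{d-1}(S(v_j) \triangle S(u)) \to 0$, such that there is a finite union $M_j$ of compact $C^1$-manifolds with $C^1$-boundary, $M_j \subset\subset \Omega$, with $S(v_j) \subset M_j$, $\mathcal{H}^{d-1}(M_j \setminus S(v_j)) = 0$, and $v_j \in C^\infty(\Omega \setminus M_j)$. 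All conditions defining $\mathcal{D}_2(\Omega)$ are met except possibly the one-sided regularity $v_j \in C^\infty(\overline{Q_\rho^\pm(x_0)})$ for $\mathcal{H}^{d-1}$-a.e. $x_0 \in S(v_j)$.

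In the second step I secure this one-sided smoothness. For fixed $j$, I cover $M_j$ by finitely many small cubes $Q_{\rho_k}(x_k)$ in which $M_j$ is a $C^1$-graph, as in \eqref{manifold}, and on each side $Q_{\rho_k}^\pm(x_k)$ the restriction $v_j$ lies in $H^1$. Straighten the local graph by a $C^1$-diffeomorphism $\Phi_k$ that flattens $M_j$ to a hyperplane, extend $v_j \circ \Phi_k^{-1}$ on each half by reflection across the flat interface, and mollify at a scale $\delta \ll \rho_k$. Pulling back by $\Phi_k$ and gluing through a smooth partition of unity supported in a thin tube $\{\mathrm{dist}(\cdot,M_j) < \delta\}$, one obtains $u_{j,\delta}$ that agrees with $v_j$ off the tube, lies in $C^\infty(\overline{Q_\rho^\pm(x_0)})$ for $\mathcal{H}^{d-1}$-a.e. point of $M_j$ and $\rho$ small, and still satisfies $u_{j,\delta} \in C^\infty(\Omega \setminus M_j)$. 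A diagonal choice $u_j := u_{j,\delta_j}$ with $\delta_j \to 0$ sufficiently fast then delivers the required sequence in $\mathcal{D}_2(\Omega)$.

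The main obstacle is to preserve all three convergences through the refinement. The $BV$-convergence is easy since $u_{j,\delta}$ differs from $v_j$ only in a set of measure $O(\delta \,\mathcal{H}^{d-1}(M_j))$ on which both are bounded uniformly. The $L^2$ gradient convergence is the delicate point: because $M_j$ is only $C^1$, the straightening diffeomorphism has bounded but non-smooth derivative, so one must rely on the $H^1$-convergence of the mollified one-sided extensions toward the original $v_j$ on each side (standard mollifier estimates applied to $H^1$-functions on the flattened half-space) and on the uniform $C^1$-control of $\Phi_k$ to transfer this to $L^2$-closeness of $\nabla u_{j,\delta}$ to $\nabla v_j$ in the tube. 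The surface control $\mathcal{H}^{d-1}(S(u_j)\triangle S(u)) \to 0$ follows because $S(u_{j,\delta}) \subset M_j$, and the portion of $M_j$ on which the two-sided traces of $u_{j,\delta}$ coincide (so the jump disappears) has $\mathcal{H}^{d-1}$-measure tending to $0$ as $\delta \to 0$, since $v_j$ has a genuine jump $\mathcal{H}^{d-1}$-a.e. on $S(v_j)$ and the mollifications of the one-sided traces converge to these traces in $L^1(M_j)$.
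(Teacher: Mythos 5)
The overall blueprint (first invoke the density theorem of \cite{de2016approximation}, then locally upgrade to one-sided regularity near the jump manifold by a covering and gluing argument) is the same as the paper's, and the three convergences are handled in the same spirit. However, there are two concrete gaps in the second step.

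First, the straightening-and-pullback step does not deliver the required regularity. The diffeomorphism $\Phi_k$ that flattens a $C^1$-graph is only $C^1$. After mollifying in the flat coordinates you obtain a genuinely $C^\infty$ function $w_\delta$, but composing with $\Phi_k$ yields $w_\delta\circ\Phi_k$, which is merely $C^1$ (a second derivative of $w_\delta\circ\Phi_k$ would require a second derivative of $\Phi_k$, which does not exist). Hence the functions you produce are only $C^1$ up to $\overline{Q_\rho^\pm(x_0)}$, not $C^\infty$, and therefore do not belong to $\mathcal{D}_2(\Omega)$ as defined in \eqref{D2def}. You even observe that $\Phi_k$ ``has bounded but non-smooth derivative'' but use it only to worry about $L^2$-estimates, not about smoothness. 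The fix is to drop $\Phi_k$ entirely: reflect directly across the $C^1$-graph (a $C^1$ map is perfectly adequate as a measurable change of variables), mollify the reflected $W^{1,2}$ function in the ambient coordinates, and restrict; the restriction of a globally $C^\infty$ function to a $C^1$ subgraph domain is $C^\infty$ up to its boundary. Alternatively, and this is what the paper does, one avoids any explicit construction and appeals to density of $C^\infty(\overline{D})\cap W^{1,2}(D)$ in $W^{1,2}(D)$ for a Lipschitz domain $D$, applied to $\varphi_{n,\varepsilon}u$ on each side $\Omega_n^{\varepsilon,\pm}$, followed by summation.

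Second, you do not address the boundary $\partial M_j$ of the jump manifold. Theorem C of \cite{de2016approximation} produces a $C^1$-manifold with (possibly nonempty) $C^1$-boundary, and near a point of $\partial M_j$ the manifold is not a graph over an entire cube, so the covering by cubes $Q_{\rho_k}(x_k)$ with the property \eqref{manifold} does not extend to a full neighborhood of $M_j$. The paper handles this by excising a tube $(\partial M)_\varepsilon$, setting the approximation to zero there, and then showing via the three estimates \eqref{Dsu1}--\eqref{Dsu3} and the choice \eqref{epschosen} that both the extra jump created along $\partial(\partial M)_\varepsilon$ and the lost jump inside $M\cap(\partial M)_\varepsilon$ have small $\mathcal{H}^{d-1}$-measure, hence small contribution to $\|u_j-u\|_{BV}$ and $\mathcal{H}^{d-1}(S(u_j)\triangle S(u))$. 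This excision and the accompanying double-limit ($j\to\infty$, then $\varepsilon\to0$) are a nontrivial part of the argument that your sketch omits entirely.
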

\begin{proof}  By \cite{de2016approximation} Theorem C it suffices to prove the claim for $u \in SBV^2(\Omega) \cap L^\infty(\Omega)$ such that there exists a $C^1$-manifold $M$ with (possibly empty) $C^1$-boundary, $M\subset\subset \Omega$, such that $J(u) \subset M$, $\mathcal{H}^{d-1}(M \setminus J(u))=0$ and $u \in C^\infty(\Omega \setminus \overline{J(u)})$. Let $\delta>0$ and let $\varepsilon>0$ be such that $\partial (\partial M)_\varepsilon$ is a $C^1$-manifold  such that $\mathcal{H}^{d-1}(M \cap \partial (\partial M)_\varepsilon)=0$ and
\begin{align} \label{epschosen}
(2||u||_\infty+1) (\mathcal{H}^{d-1}(\partial(\partial M)_\varepsilon) + \mathcal{H}^{d-1}(M\cap (\partial M)_\varepsilon) < \delta, ||u||_{W^{1,2}((\partial M)_\varepsilon)} <\delta.
\end{align}
Let 
\begin{align*}
\mathcal{Q} = \Big\{ Q\subset \Omega : Q=Q^\nu_\rho(x_0), x_0 \in M, \nu = \nu_M(x_0), \rho >0 \text{ and } (\ref{manifold}) \text{ is satisfied}\Big\}.
\end{align*}
Since $M \setminus (\partial M)_{\frac{\varepsilon}{2}}$ is compact, there exists $\{\Omega_1^\varepsilon,\ldots,\Omega_{N_\varepsilon}^\varepsilon\}$ such that $\Omega_n^\varepsilon \in \mathcal{Q}$ for all $n \in\{1,\ldots,N_\varepsilon\}$ (i.e. $\Omega_n^\varepsilon = Q^{\nu_n}_{\rho_n}(x_n)$, with properties as above) and
\begin{align*}
M \setminus (\partial M)_{\frac{\varepsilon}{2}} \subset \bigcup_{n=1}^{N_\varepsilon} \Omega_n^\varepsilon.
\end{align*}
Let $\mathrm{d}_\varepsilon =\min \{\{\rho_n\}_{n=1}^{N_\varepsilon},\varepsilon\}$, set 
$\Omega_0^\varepsilon = \{x \in \Omega : \mathrm{dist}(x,M) > \frac{\mathrm{d}_\varepsilon}{2}\}$ be such that $\Omega_0^\varepsilon$ is a set with Lipschitz boundary and $\Omega_{N_\varepsilon+1}^\varepsilon = (\partial M)_\varepsilon$ . We have
\begin{align*}
\Omega \subset \bigcup_{n=0}^{N_\varepsilon+1} \Omega_n^\varepsilon
\end{align*}
and therefore there exists a partition of unity $\{\varphi_{n,\varepsilon}\}_{n=0}^{N_\varepsilon+1}$, i.e. 
\begin{align*}
\varphi_{n,\varepsilon} \in C_c^\infty(\Omega_n^\varepsilon),\quad 0\leq \varphi_{n,\varepsilon}\leq  1\text{ and } \sum_{n=0}^{N_\varepsilon+1} \varphi_{n,\varepsilon}=1.
\end{align*}
Since $\Omega_0^\varepsilon$ is a set with Lipschitz boundary and $|D^s(\varphi_{0,\varepsilon} u)|(\Omega_0^\varepsilon)=0$ we have that $\varphi_{0,\varepsilon} u \in W^{1,2}(\Omega_0^\varepsilon)$ and therefore there exists $\{u_{j,0}^\varepsilon\}_j \subset W^{1,2}(\Omega_0^\varepsilon) \cap C^\infty(\overline{\Omega_0^\varepsilon})$ such that $||u_{j,0}^\varepsilon ||_\infty \leq  ||u||_\infty$ and $u_{j,0}^\varepsilon \to \varphi_{0,\varepsilon}u$ strongly in $W^{1,2}(\Omega_0^\varepsilon)$. Now let $n \in \{1,\ldots,N_\varepsilon\}$. By property (\ref{manifold}) we have that 
\begin{align*}
\Omega_n^\varepsilon  =\Omega_n^{\varepsilon,+}\cup \Omega_n^{\varepsilon,-} \cup (\Omega_n^\varepsilon \cap M),
\end{align*}
where there exists $R_n \in SO(d)$ such that 
\begin{align*}
\Omega_n^{\varepsilon,\pm}&= R_n\{(x^{\prime},x_d) \in Q_\rho(x_0) : x_d \gtrless f(x^{\prime}) \}, \\(\Omega_n^\varepsilon \cap M) &= R_n\{(x^{\prime},x_d) \in Q_\rho(x_0) :  x_d = f(x^{\prime}) \}.
\end{align*}
Note that $\Omega_n^{\varepsilon,\pm}$ has a Lipschitz boundary and $|D^s( \varphi_{n,\varepsilon} u)|(\Omega_n^{\varepsilon,\pm})=0$. Therefore $\varphi_{n,\varepsilon}u \in W^{1,2}(\Omega_n^{\varepsilon,\pm})$ and we have that there exists $\{u_{j,n}^{\varepsilon,\pm}\}_j \subset W^{1,2}(\Omega_n^{\varepsilon,\pm}) \cap C^\infty(\overline{\Omega_n^{\varepsilon,\pm}})$ such that $||u_{j,n}^{\varepsilon,\pm} ||_\infty \leq  ||u||_\infty$ and $u_{j,n}^{\varepsilon,\pm} \to \varphi_{n,\varepsilon}u$ strongly in $W^{1,2}(\Omega_n^{\varepsilon,\pm})$. We define $u_{j,n}^{\varepsilon} \in SBV^2(\Omega_n^\varepsilon) \cap L^\infty(\Omega)$ by
\begin{align*}
u_{j,n}^{\varepsilon}(x) = \begin{cases}
u_{j,n}^{\varepsilon,+}(x) &x \in \Omega_n^{\varepsilon,+}\\
u_{j,n}^{\varepsilon,-}(x) &x \in \Omega_n^{\varepsilon,-}.
\end{cases}
\end{align*}
Now $S(u_{j,n}^\varepsilon) \subset M$ and $u_{j,n}^\varepsilon \in C^\infty(\overline{\Omega_n^{\varepsilon,\pm}})$. We define $u_{j}^\varepsilon \in SBV^2(\Omega) \cap L^\infty(\Omega)$ by
\begin{align*}
u_j^\varepsilon(x) = \begin{cases} \displaystyle \sum_{n=0}^{N_\varepsilon} u_{j,n}^\varepsilon(x) &x \in \Omega \setminus (\partial M)_\varepsilon \\
0 &\text{otherwise.}
\end{cases}
\end{align*}
Since $\mathrm{supp}(\varphi_{N_\varepsilon+1,\varepsilon}) \subset (\partial M)_\varepsilon$ and therefore $\sum_{n=1}^{N_\varepsilon} \varphi_{n,\varepsilon}(x)=1$ for all $x \in \Omega \setminus (\partial M)_\varepsilon$ we have that $u_{j}^\varepsilon \to u$ in $W^{1,2}(\Omega \setminus ((\partial M)_\varepsilon \cup M))$, therefore for $j $ big enough, using (\ref{epschosen}),
\begin{align*}
||u - u^\varepsilon_j||_{L^1(\Omega)} \leq C||u - u^\varepsilon_j||_{L^2(\Omega)} \leq C||u||_{L^2((\partial M)_\varepsilon)} + C||u - u^\varepsilon_j||_{L^1(\Omega \setminus (\partial M)_\varepsilon)} < C\delta
\end{align*}
and 
\begin{align*}
||\nabla u - \nabla u^\varepsilon_j||_{L^2(\Omega;\mathbb{R}^d)} \leq ||\nabla u||_{L^2((\partial M)_\varepsilon;\mathbb{R}^d)} +||u - u^\varepsilon_j||_{L^2(\Omega \setminus (\partial M)_\varepsilon;\mathbb{R}^d)} < C\delta.
\end{align*}
By H\"older's Inequality we also have that $||\nabla u - \nabla u^\varepsilon_j||_{L^1(\Omega;\mathbb{R}^d)} < C\delta$. Note that $S(u_j^\varepsilon) \subset (M \setminus (\partial M)_\varepsilon) \cup \partial (\partial M)_\varepsilon $. Since we have the strong $W^{1,2}(\Omega \setminus ((\partial M)_\varepsilon \cup M))$-convergence of $u_\varepsilon^j \to u$ we can apply locally the trace theorem and we have that $(u_j^{\varepsilon})^\pm \to u^\pm$ in $L^2(M \setminus (\partial M)_\varepsilon)$ and therefore, since $\mathcal{H}^{d-1}(M) < +\infty$ in $L^1(M \setminus (\partial M)_\varepsilon)$. By the same reasoning we also have $(u_j^{\varepsilon})^+ \to u^+$ in $L^1(\partial (\partial M)_\varepsilon)$.  We therefore have 
\begin{align*}
|D^s( u^\varepsilon_j -u)|(\Omega) &= \int_{S(u^\varepsilon_j-u)} |(u_j^{\varepsilon}-u)^+- (u_j^{\varepsilon}-u)^-|\mathrm{d}\mathcal{H}^{d-1} \\&\leq \int_{M\setminus (\partial M)_\varepsilon} |(u_j^{\varepsilon}-u)^+|+| (u_j^{\varepsilon}-u)^-|\mathrm{d}\mathcal{H}^{d-1}  \\&\quad +\int_{M\cap (\partial M)_\varepsilon} |u^+-u^-|\mathrm{d}\mathcal{H}^{d-1}  + \int_{\partial(\partial M)_\varepsilon} |(u_j^{\varepsilon})^+-(u_j^{\varepsilon})^-|\mathrm{d}\mathcal{H}^{d-1}
\end{align*}
Now for the first term we have that for $j $ big enough there holds
\begin{align}\label{Dsu1}
\begin{split}
\int_{M\setminus (\partial M)_\varepsilon} |(u_j^{\varepsilon}-u)^+|+| (u_j^{\varepsilon}-u)^-|\mathrm{d}\mathcal{H}^{d-1} = &||(u_j^{\varepsilon})^+-u^+||_{L^1(M\setminus (\partial M)_\varepsilon)} \\ + &||(u_j^{\varepsilon})^--u^-||_{L^1(M\setminus (\partial M)_\varepsilon)} < \delta.
\end{split}
\end{align}
For the second term we have by (\ref{epschosen})
\begin{align}\label{Dsu2}
\begin{split}
\int_{M\cap (\partial M)_\varepsilon} |u^+-u^-|\mathrm{d}\mathcal{H}^{d-1} \leq 2||u||_\infty \mathcal{H}^{d-1}(M\cap (\partial M)_\varepsilon) <\delta
\end{split}
\end{align}
whereas for the last term we have for $j$ big enough
\begin{align}\label{Dsu3}
\begin{split}
\int_{\partial(\partial M)_\varepsilon} |(u_j^{\varepsilon})^+-(u_j^{\varepsilon})^-|\mathrm{d}\mathcal{H}^{d-1} &=  ||(u^{\varepsilon}_j)^+||_{L^1(\partial(\partial M)_\varepsilon)} \leq ||u^+||_{L^1(\partial(\partial M)_\varepsilon)} \\&\leq 2||u||_\infty \mathcal{H}^{d-1}(\partial(\partial M)_\varepsilon) < \delta.
\end{split}
\end{align}
Hence for $j$ big enough we have that
$
|D^s( u^\varepsilon_j -u)|(\Omega) < C\delta.
$ Now
\begin{align*}
\mathcal{H}^{d-1}(S(u^\varepsilon_j) \triangle S(u)) &\leq  \mathcal{H}^{d-1}(S(u^\varepsilon_j) \setminus S(u)) + \mathcal{H}^{d-1}(S(u)\setminus S(u^\varepsilon_j) ) \\&\leq \mathcal{H}^{d-1}(\partial (\partial M)_\varepsilon)) + \mathcal{H}^{d-1}(M\cap (\partial M)_\varepsilon)) \\&\quad+ \mathcal{H}^{d-1}(( M \setminus  \overline{(\partial M)_\varepsilon})\setminus  S(u^\varepsilon_j) ) \\&\leq C\delta + \mathcal{H}^{d-1}((M \setminus \overline{(\partial M)_\varepsilon})\setminus  S(u^\varepsilon_j) ).
\end{align*}
Now since $u_j^\varepsilon \to u$ in $L^1(\Omega \setminus (\partial M)_\varepsilon)$, $\displaystyle\sup_{j}(|Du^\varepsilon_j|(\Omega \setminus (\partial M)_\varepsilon) + ||\nabla u_j^\varepsilon||_{L^2(\Omega \setminus (\partial M)_\varepsilon)}) <+\infty$,  and $S(u_j^\varepsilon) \subset (M \setminus (\partial M)_\varepsilon) \cup \partial (\partial M)_\varepsilon $ we have that
\begin{align*}
\mathcal{H}^{d-1}(S(u) \cap \Omega\setminus \overline{(\partial M)_\varepsilon})&=\mathcal{H}^{d-1}(M\setminus \overline{(\partial M)_\varepsilon})\geq\limsup_{j \to \infty} \mathcal{H}^{d-1}(S(u_j^\varepsilon) \cap \Omega \setminus \overline{(\partial M)_\varepsilon})\\&\geq \liminf_{j \to \infty} \mathcal{H}^{d-1}(S(u_j^\varepsilon) \cap \Omega \setminus \overline{(\partial M)_\varepsilon}) \\&\geq \mathcal{H}^{d-1}(S(u) \cap \Omega \setminus \overline{(\partial M)_\varepsilon}).
\end{align*}
Hence, noting $S(u_j^\varepsilon) \cap (\Omega \setminus \overline{(\partial M)_\varepsilon}) \subset S(u) \cap (\Omega \setminus \overline{(\partial M)_\varepsilon}) $ we have for $j$ big enough that
\begin{align*}
\mathcal{H}^{d-1}((M \setminus\overline{ (\partial M)_\varepsilon})\setminus  S(u^\varepsilon_j) ) &= \mathcal{H}^{d-1}((S(u) \setminus S(u^\varepsilon_j)) \cap (\Omega \setminus \overline{(\partial M)_\varepsilon})) \\&\leq \mathcal{H}^{d-1}(S(u) \setminus (\Omega\setminus \overline{(\partial M)_\varepsilon})) - \mathcal{H}^{d-1}(S(u_j^\varepsilon) \setminus (\Omega\setminus \overline{(\partial M)_\varepsilon})) \\& <\delta.
\end{align*}
It remains to prove that $u_j^\varepsilon \in \mathcal{D}_2(\Omega)$. By construction $u_j^\varepsilon \in SBV^2(\Omega) \cap L^\infty(\Omega)$. And setting as the union of manifolds in the definition of $\mathcal{D}_2(\Omega)$ the finite union of compact manifolds given by $K^j_\varepsilon = (M \cup \partial (\partial M)_\varepsilon) \setminus \{[u_j^\varepsilon] =0\} \subset\subset \Omega$ we see that $u_j^\varepsilon \in C^\infty(\Omega \setminus K_\varepsilon^j)$, $S(u_\varepsilon^j) \subset K_\varepsilon^j$, $\mathcal{H}^{d-1}(K_\varepsilon^j \setminus S(u_j^\varepsilon))=0$ and for $\mathcal{H}^{d-1}$-a.e. $ x \in S(u_j^\varepsilon)$ and $ \rho >0$  small enough $  u_j^\varepsilon \in C^\infty(\overline{Q_\rho^\pm(x_0)}) $. The claim follows by letting first $j \to \infty$ and then $\varepsilon \to 0$. 
\end{proof}
Before we prove Proposition \ref{GammalimsupLemma} we state some useful Lemmas and Propositions that will be used in the demonstration of it. We postpone their proves until after the proof of Proposition \ref{GammalimsupLemma}.

\medskip

Proposition \ref{Neihbourhoodcardlemma} will be used to estimate the cardinality of the points close to $M$.

\begin{proposition}\label{Neihbourhoodcardlemma} Let $\rho,\varepsilon >0$ and let $E \in\mathcal{B}(\mathbb{R}^d)$. Then
\begin{align*}
\#\left( E_\rho \cap \varepsilon\mathbb{Z}^d\right) \leq C\frac{|E_\rho|}{(\varepsilon\wedge \rho)^d}
\end{align*}
where $C$ depends only on $d$.
 \end{proposition}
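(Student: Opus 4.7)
The plan is a standard packing-plus-doubling argument. Set $\tau = \varepsilon \wedge \rho$. First I would observe that distinct points of $\varepsilon\mathbb{Z}^d$ are at distance at least $\varepsilon \ge \tau$, so the family of open balls $\{B(i,\tau/2) : i \in E_\rho \cap \varepsilon\mathbb{Z}^d\}$ is pairwise disjoint. In line with the intended application of the proposition to counting lattice points close to the jump-manifold $M$ (a set of Lebesgue measure zero, for which an inner layer reading would be useless), I read $E_\rho$ here as the open $\rho$-enlargement $\{x : \operatorname{dist}(x,E) < \rho\}$ of $E$. Each ball $B(i,\tau/2)$ then lies in the $(\tau/2)$-enlargement of $E_\rho$, which equals $E_{\rho+\tau/2}$ and in particular is contained in $E_{2\rho}$, since $\tau \le \rho$.

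Next I would establish the doubling inequality $|E_{2\rho}| \le 6^d|E_\rho|$ by a one-line covering argument. Pick a maximal $\rho$-separated family $\{y_j\}_{j\in J}$ in $E$. On the one hand the balls $B(y_j,\rho/2)$ are pairwise disjoint and all contained in $E_\rho$, giving $|E_\rho| \ge \#J\cdot \omega_d (\rho/2)^d$ where $\omega_d$ denotes the volume of the unit ball; on the other hand maximality yields $E \subset \bigcup_j B(y_j,\rho)$, hence $E_{2\rho} \subset \bigcup_j B(y_j,3\rho)$ and $|E_{2\rho}| \le \#J\cdot \omega_d(3\rho)^d$. Dividing the two inequalities produces the claimed factor $6^d$.

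Combining the two steps,
\[
\omega_d(\tau/2)^d\,\#(E_\rho \cap \varepsilon\mathbb{Z}^d) \;=\; \sum_i |B(i,\tau/2)| \;\le\; |E_{2\rho}| \;\le\; 6^d|E_\rho|,
\]
which rearranges to $\#(E_\rho\cap\varepsilon\mathbb{Z}^d) \le (12^d/\omega_d)|E_\rho|/\tau^d$, a constant depending only on $d$. I do not anticipate any genuine obstacle: every ingredient is elementary. The only point one must watch is the overloaded notation --- $E_\rho$ is to be read as the Minkowski $\rho$-enlargement of $E$ (which is what gives a meaningful bound when $E$ is, for instance, a hypersurface) rather than as the inner boundary layer from Section~2.
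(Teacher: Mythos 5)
Your proof is correct, and it is a genuinely different argument from the one in the paper. The paper isolates an auxiliary lemma: if every point of a set $E$ is contained in some ball of radius $r$ that is itself contained in $E$, then $\#(E\cap\varepsilon\mathbb{Z}^d)\leq C|E|/(\varepsilon\wedge r)^d$; the proposition follows by applying this to $E_\rho=\bigcup_{x\in E}B_\rho(x)$ with $r=\rho$. The lemma is proved by tiling $\mathbb{R}^d$ with the disjoint cubes $Q_\varepsilon(i)$ and showing, for each $i\in E\cap\varepsilon\mathbb{Z}^d$, that the piece $B_r^i\cap Q_\varepsilon(i)$ has measure at least $2^{-2d}|B_1|(\varepsilon\wedge r)^d$; summing and using that the cubes (hence these pieces) stay inside $E$ gives the bound with no enlargement of the set. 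You instead pack disjoint balls $B(i,\tau/2)$ with $\tau=\varepsilon\wedge\rho$, which is also a clean use of the lattice spacing for disjointness, but these balls spill into $E_{2\rho}$ rather than $E_\rho$, so you have to pay for that with the $6^d$-doubling estimate $|E_{2\rho}|\leq 6^d|E_\rho|$. What each route buys: the paper's cube-tiling keeps everything inside $E_\rho$ and so needs no doubling, at the cost of the somewhat fiddly lower bound on $|B_r\cap Q_\varepsilon(x)|$; your route replaces that geometry with balls (trivially of known measure) but then needs the doubling lemma, and ends up with a slightly larger constant ($12^d/\omega_d$ versus $4^d/\omega_d$), which is irrelevant here. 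You also correctly flagged the notational clash: Section~2 defines $A_\eta$ as an inner layer, but in this proposition $E_\rho$ must be the Minkowski enlargement, which is exactly what the paper's own one-line proof ($E_\rho=\bigcup_{x\in E}B_\rho(x)$) confirms.
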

 
The next two lemmas allows to construct the recovery sequence for a function $u \in \mathcal{D}_2(\Omega)$ for cubes which do not intersect the jump set and for cubes which do intersect the jump set respectively.
 
 \begin{lemma}\label{EqualityBulkLemma} Let $u \in \mathcal{D}_2(\Omega)$. For $\mathcal{L}^d$-a.e. $x \in \Omega$ and all $\nu \in S^{d-1}$ there holds
\begin{align*}
\lim_{\rho \to 0} \frac{1}{\rho^d}\lim_{\eta \to 0} \lim_{\varepsilon \to 0} m_\varepsilon^\eta(u,Q^\nu_\rho(x_0)) = \lim_{\rho \to 0} \frac{1}{\rho^d}\lim_{\eta \to 0} \lim_{\varepsilon \to 0} m_\varepsilon^\eta(u(x_0)+\nabla u(x_0)(\cdot-x_0),Q^\nu_\rho(x_0)).
\end{align*}
\end{lemma}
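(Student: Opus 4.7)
The plan is to exploit the regularity of $u \in \mathcal{D}_2(\Omega)$ off the jump set and to compare the two minimisation problems by a Taylor-type small perturbation of an admissible competitor. By the very definition of $\mathcal{D}_2(\Omega)$, one has $u \in C^\infty(\Omega\setminus M)$ with $M$ a finite union of compact $(d-1)$-dimensional manifolds, hence $\mathcal{L}^d(M)=0$, so it is enough to fix $x_0 \in \Omega\setminus M$. Setting $v_{x_0}(x) := u(x_0)+\nabla u(x_0)(x-x_0)$, Taylor's theorem gives $\|u-v_{x_0}\|_{L^\infty(Q^\nu_\rho(x_0))} \leq C\rho^2$ and $\|\nabla u-\nabla u(x_0)\|_{L^\infty(Q^\nu_\rho(x_0))} \leq C\rho$ for $\rho$ small, and in particular the discrete difference quotients satisfy $|D^\xi_\varepsilon(u-v_{x_0})(i)|\leq C\rho$ uniformly on $Z_\varepsilon(Q^\nu_\rho(x_0))$.

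The algebraic core of the argument is the pointwise inequality
\begin{align*}
W^{i,\xi}_\varepsilon(a+b) \leq (1+\alpha)\, W^{i,\xi}_\varepsilon(a) + \bigl(1+\tfrac{1}{\alpha}\bigr) b^2, \qquad \forall\,\alpha>0,\, a,b \in \mathbb{R},
\end{align*}
obtained by combining Young's inequality $(a+b)^2 \leq (1+\alpha)a^2+(1+1/\alpha)b^2$ with the two scalar observations $\min\{X+Y,M\} \leq \min\{X,M\}+Y$ (valid for $Y\geq 0$) and $\min\{(1+\alpha)X,M\} \leq (1+\alpha)\min\{X,M\}$. This is a sharpening of the $2$-subadditivity (\ref{Potentialsubadd}) already used in Lemma \ref{LiminfBulkLemma}, and it is essential because an uncancellable factor of $2$ would prevent the identification of the two cell formulas in the limit.

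For the $\leq$ direction, I would, for each $\varepsilon>0$, pick a competitor $v_\varepsilon \in \mathcal{PC}_\varepsilon(\Omega)$ almost attaining $m_{\varepsilon,\eta}^{F_\varepsilon}(v_{x_0},Q^\nu_\rho(x_0))$, so that $v_\varepsilon=v_{x_0}$ on $Z_\varepsilon((Q^\nu_\rho(x_0))_\eta \cup (Q^\nu_\rho(x_0))^c)$, and set $\tilde v_\varepsilon(i):=v_\varepsilon(i)+(u-v_{x_0})(i)$. Then $\tilde v_\varepsilon=u$ on the same boundary layer, so $\tilde v_\varepsilon$ is admissible for $m_{\varepsilon,\eta}^{F_\varepsilon}(u,Q^\nu_\rho(x_0))$. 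Applying the key inequality with $a=D^\xi_\varepsilon v_\varepsilon(i)$, $b=D^\xi_\varepsilon(u-v_{x_0})(i)$, summing, and using $\sum_{i \in Z_\varepsilon(Q^\nu_\rho(x_0))}\varepsilon^d \leq C\rho^d$ together with $|b|\leq C\rho$, one gets
\begin{align*}
\frac{m_{\varepsilon,\eta}^{F_\varepsilon}(u,Q^\nu_\rho(x_0))}{\rho^d} \leq (1+\alpha)\,\frac{F_\varepsilon(v_\varepsilon,Q^\nu_\rho(x_0))}{\rho^d} + C\bigl(1+\tfrac{1}{\alpha}\bigr)\rho^2.
\end{align*}
Sending $\varepsilon\to 0$, then $\eta\to 0$, then $\rho\to 0$, and finally $\alpha\to 0$ in this order yields the $\leq$ inequality. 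The reverse inequality is obtained symmetrically, starting from an almost-optimal $w_\varepsilon$ for $m_{\varepsilon,\eta}^{F_\varepsilon}(u,Q^\nu_\rho(x_0))$ and defining $\tilde w_\varepsilon(i):=w_\varepsilon(i)-(u-v_{x_0})(i)$.

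The main technical obstacle I anticipate is keeping the order of limits straight: the remainder $C(1+1/\alpha)\rho^2$ contains a $1/\alpha$-divergence, so one is forced to exhaust $\rho\to 0$ for fixed $\alpha$ \emph{before} sending $\alpha\to 0$ to kill the $(1+\alpha)$ factor. This is precisely why the sharp $(1+\alpha)$-splitting is needed rather than the cleaner $2$-subadditivity (\ref{Potentialsubadd}). Everything else is bookkeeping: the smoothness of $u$ near $x_0$ makes the pointwise evaluation $(u-v_{x_0})(i)$ at lattice points innocuous, and extending $v_\varepsilon$ outside $Q^\nu_\rho(x_0)$ by $v_{x_0}$ preserves the discrete boundary-compatibility required by the definition of $m_{\varepsilon,\eta}^{F_\varepsilon}$.
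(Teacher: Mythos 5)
Your proof is correct, but it departs from the paper's argument in an interesting way. The paper proves this lemma by reusing the De~Giorgi slicing construction from Lemma~\ref{LiminfBulkLemma}: starting from a near-optimal competitor $v$ for $m^{F_\varepsilon}_{\varepsilon,\eta}(u,Q^\nu_\rho(x_0))$, it interpolates between $v$ and $u_0(x)=u(x_0)+\nabla u(x_0)(x-x_0)$ via cut-off functions $\varphi_k$ supported in thin annuli, and then averages over the annuli to select one with small energy contribution. The only quantitative input it uses is $\|u-u_0\|_{L^\infty(Q^\nu_\rho(x_0))}\leq C\rho$, and the cut-off penalty $\sim K/\delta^2 \int |u-u_0|/\rho$ is then driven to zero. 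Your route instead performs a global additive shift $\tilde v_\varepsilon = v_\varepsilon + (u - u_0)$, which exactly matches the new boundary datum without any gluing, and then controls the energy by the sharp pointwise inequality $W^{i,\xi}_\varepsilon(a+b)\leq(1+\alpha)W^{i,\xi}_\varepsilon(a)+(1+1/\alpha)b^2$. Your verification of that inequality is correct (both the case check and the two scalar monotonicity lemmas for $\min\{\cdot,M\}$ are valid), and you are right that the sharp $(1+\alpha)$ constant is essential: the cruder $2$-subadditivity (\ref{Potentialsubadd}) would only give the two cell densities up to a factor of $2$. The shift approach trades the cut-off bookkeeping for a stronger hypothesis -- it needs the \emph{discrete gradients} $D^\xi_\varepsilon(u-u_0)$ to be uniformly $O(\rho)$, i.e.\ $C^1$-type closeness, which the $C^\infty$ regularity of $\mathcal{D}_2$ functions off $M$ does provide -- whereas the paper's slicing argument gets by with mere $L^\infty$-closeness. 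Your ordering of limits ($\varepsilon\to0$, then $\eta\to0$, then $\rho\to0$ to absorb the $(1+1/\alpha)\rho^2$ term, and only then $\alpha\to0$ to remove the $(1+\alpha)$ prefactor, which is legitimate because $f(x_0,\cdot)$ is finite by (\ref{fbounds})) is correct, and the reverse inequality obtained by shifting by $-(u-u_0)$ closes the argument.
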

\begin{lemma}\label{EqualitySurfaceLemma} Let $u \in \mathcal{D}_2(\Omega)$. For $\mathcal{H}^{d-1}$-a.e. $x \in S(u)$ there holds
\begin{align*}
\lim_{\rho \to 0} \frac{1}{\rho^{d-1}}\lim_{\eta \to 0} \lim_{\varepsilon \to 0} m_\varepsilon^{\frac{\eta}{2}}(u,Q^\nu_\rho(x_0)) \leq \lim_{\rho \to 0} \frac{1}{\rho^{d-1}}\lim_{\eta \to 0} \lim_{\varepsilon \to 0} m_\varepsilon^\eta(u_{x_0,\nu},Q^\nu_\rho(x_0)).
\end{align*}
\end{lemma}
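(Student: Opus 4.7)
The strategy is to build a competitor $w_\varepsilon$ for the LHS minimum from a near-minimizer $v_\varepsilon$ of the RHS minimum by a "color substitution": where $v_\varepsilon$ takes the value $u^+(x_0)$, I put a smooth extension $\tilde u^+$ of $u|_{\overline{Q^+_\rho(x_0)}}$; where $v_\varepsilon$ takes $u^-(x_0)$, I put $\tilde u^-$. This uses crucially that $u\in\mathcal{D}_2(\Omega)$ guarantees, for $\mathcal{H}^{d-1}$-a.e.\ $x_0\in S(u)$ and all small $\rho$, that $u|_{\overline{Q^\pm_\rho(x_0)}}\in C^\infty(\overline{Q^\pm_\rho(x_0)})$, so Whitney-type extensions $\tilde u^\pm\in C^\infty(\overline{Q^\nu_\rho(x_0)})$ with uniformly bounded gradient exist.

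I first reduce the RHS via the equivalence \eqref{infeq}: up to replacing $u_{x_0,\nu}$ by the true-value step function $u_0:=u_{x_0,\nu}^{u^+(x_0),u^-(x_0)}$, the RHS equals $\lim_{\rho\to 0}\rho^{1-d}\lim_{\eta\to 0}\lim_{\varepsilon\to 0} m_{\varepsilon,\eta}^{F_\varepsilon}(u_0,Q^\nu_\rho(x_0))$. For an almost-minimizer $v_\varepsilon$ of this problem, a discrete coarea argument exactly analogous to the one in the proof of Lemma \ref{LiminfSurfaceLemma} (Fubini plus H\"older to pick a threshold $t_\varepsilon\in(u^-(x_0),u^+(x_0))$ bounded away from the endpoints) allows me to assume, up to an energy error of $o(\rho^{d-1})$, that $v_\varepsilon$ takes only the two values $u^\pm(x_0)$. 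Then define $w_\varepsilon(i):=\tilde u^+(i)$ if $v_\varepsilon(i)=u^+(x_0)$ and $w_\varepsilon(i):=\tilde u^-(i)$ otherwise.

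Admissibility is immediate: since $v_\varepsilon=u_0$ on $(Q^\nu_\rho(x_0))_\eta\supset(Q^\nu_\rho(x_0))_{\eta/2}$ and $\tilde u^\pm=u$ on $Q^\pm_\rho(x_0)$, the substitution produces $w_\varepsilon=u$ on the required strip, so $w_\varepsilon$ is admissible for $m_{\varepsilon,\eta/2}^{F_\varepsilon}(u,Q^\nu_\rho(x_0))$. For the energy estimate, I split the interaction bonds $(i,i+\varepsilon\xi)$ into "same-side" bonds (where $v_\varepsilon(i)=v_\varepsilon(i+\varepsilon\xi)$) and "cross-side" bonds. On same-side bonds, $|D^\xi_\varepsilon w_\varepsilon(i)|\leq \|\nabla\tilde u^\pm\|_\infty |\xi|\leq C$, so each contributes at most $C\varepsilon^d$; summing over the $O(\rho^d/\varepsilon^d)$ bonds in $Q^\nu_\rho(x_0)$ yields a total contribution $\leq C\rho^d$. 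On cross-side bonds, $|D^\xi_\varepsilon w_\varepsilon(i)|\geq(|u^+(x_0)-u^-(x_0)|-C\rho)/\varepsilon$, which for $\rho$ and $\varepsilon$ small lies above $\sqrt{c_{i,\xi}^\varepsilon/\varepsilon}$, so $W^{i,\xi}_\varepsilon$ saturates at $c_{i,\xi}^\varepsilon/\varepsilon$ — exactly as for $v_\varepsilon$. Combining,
\[
F_\varepsilon(w_\varepsilon,Q^\nu_\rho(x_0))\leq F_\varepsilon(v_\varepsilon,Q^\nu_\rho(x_0))+C\rho^d ;
\]
dividing by $\rho^{d-1}$ and passing to the nested limits $\varepsilon\to 0$, $\eta\to 0$, $\rho\to 0$ the extra $C\rho$ disappears and the inequality of the lemma follows.

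The step I expect to be most delicate is the coarea reduction to a two-valued $v_\varepsilon$. One must redo the Fubini/H\"older computation of Lemma \ref{LiminfSurfaceLemma} so that the energy increase upon thresholding is bounded by $o(\rho^{d-1})$ after normalisation, exploiting the a priori bound $F_\varepsilon(v_\varepsilon,Q^\nu_\rho(x_0))\leq C\rho^{d-1}$ coming from near-minimality together with the fact that $t_\varepsilon$ stays bounded away from $u^\pm(x_0)$; the rest of the argument, including the substitution itself, is routine once this reduction is in place.
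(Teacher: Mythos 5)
Your overall strategy — build a competitor by a ``color substitution'' that replaces the two spin values of a near-minimizer of the right-hand cell problem with Lipschitz extensions of $u|_{Q^\pm_\rho(x_0)}$ — is exactly the paper's construction (compare the definition of $v_\varepsilon^{\eta,\rho}$ via $u_1,u_2$ in the paper). But your admissibility claim is wrong, and this is a genuine gap. Your near-minimizer $v_\varepsilon$ satisfies $v_\varepsilon = u_0$ on the boundary strip $(Q^\nu_\rho(x_0))_\eta$, so there it takes the value $u^+(x_0)$ precisely on the half-space $\{(x-x_0)\cdot\nu\geq 0\}$. After substitution, on the strip $w_\varepsilon = \tilde u^+\chi_{\{(x-x_0)\cdot\nu\geq 0\}}+\tilde u^-\chi_{\{(x-x_0)\cdot\nu< 0\}}$, whereas $u = \tilde u^+\chi_{Q^+_\rho(x_0)}+\tilde u^-\chi_{Q^-_\rho(x_0)}$, and $Q^\pm_\rho(x_0)$ are cut by the \emph{curved} manifold $M=\{x_d=f(x')\}$, not by the flat hyperplane. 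Since $f$ need not vanish away from $x_0$ (only $f(x_0')=0$, $\nabla f(x_0')=0$), there is a mismatch region of positive measure on which $w_\varepsilon\neq u$, so $w_\varepsilon$ is not admissible for $m_{\varepsilon,\eta/2}^{F_\varepsilon}(u,Q^\nu_\rho(x_0))$.

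This is precisely the issue the paper's proof spends most of its effort on: before the color substitution, it glues the spin near-minimizer $u_\varepsilon^{\eta,\rho}$ (boundary datum $u_{x_0,\nu}$, flat interface) to the spin field $w = 2\chi_{Q^+_\rho(x_0)}-1$ (boundary datum with the \emph{manifold} interface) across a suitably chosen annular stripe $S_{k(\varepsilon),\varepsilon}$. The cost of this gluing is controlled by an averaging/Fubini argument over stripes together with the Taylor-expansion estimate $\int_{Q^\nu_\rho\setminus(Q^\nu_\rho)^+_\eta}|w-u_{x_0,\nu}| = o(\eta\rho^{d-1})$, which exploits $\nabla f(x_0')=0$, and by the bound $F_\varepsilon(w,(Q^\nu_\rho)_\eta)\leq C\eta\rho^{d-1}$. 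Only then is the color substitution applied to the resulting $v^{\mathrm{aux}}_{\varepsilon,\eta,\rho}$, whose boundary values now agree with $w$ and hence, after substitution, with $u$. Your proof omits this gluing step entirely. (A lesser point: the coarea reduction you flag as delicate can be sidestepped — by \eqref{infeq} the cell problem on the right-hand side already has a two-valued spin near-minimizer, which the paper takes directly — but the real gap is the boundary mismatch, not the thresholding.)
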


Finally we use a coarser estimate for points, that do not lie inside one of the cubes of the covering, since for those it suffices to know that their contribution is negligible for sufficiently smooth whose measure tends to zero.

\begin{lemma}\label{UpperboundDxi} Let $u \in \mathcal{D}_2(\Omega)$. Let $i \in \varepsilon\mathbb{Z}^d \cap \Omega$ be such that $\mathrm{dist}([i,i+\varepsilon\xi], M) > \varepsilon $, then
\begin{align*}
 \varepsilon^d |D^\xi_\varepsilon u(i)|^2 \leq C\int_{([i,i+\varepsilon\xi])_\varepsilon}|\nabla u(x)|^2\mathrm{d}x
\end{align*}
for some constant $C$ depending only $d$.
\end{lemma}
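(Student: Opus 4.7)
My plan is to exploit the hypothesis $\mathrm{dist}([i,i+\varepsilon\xi], M) > \varepsilon$ to ensure enough smoothness for a pointwise fundamental-theorem-of-calculus argument, and then use a perpendicular averaging trick to convert a one-dimensional line integral of $|\nabla u|^2$ into the $d$-dimensional tube integral on the right-hand side.

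The first step is to observe that the closed $\varepsilon$-tube $\overline{([i,i+\varepsilon\xi])_\varepsilon}$ is contained in $\Omega\setminus M$: for any $x$ in this closed tube and any $z\in M$, there is $y\in [i,i+\varepsilon\xi]$ with $|x-y|\le \varepsilon$, and by the hypothesis $|y-z|>\varepsilon$, so $|x-z|\ge |y-z|-|x-y|>0$. Since $u\in\mathcal{D}_2(\Omega)$ is $C^\infty$ on $\Omega\setminus M$, this gives $u\in C^\infty$ in a neighbourhood of every translated segment $[i+y,i+y+\varepsilon\xi]$ with $|y|<\varepsilon/2$. Applying the FTC to $\phi(t)=u(i+y+t\xi)$ and then Cauchy–Schwarz in $t$ gives
\begin{equation*}
|u(i+y+\varepsilon\xi)-u(i+y)|^2 \;\le\; \varepsilon|\xi|^2\int_0^\varepsilon |\nabla u(i+y+t\xi)|^2\,dt,
\end{equation*}
for every such $y$. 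Specialising to $y=0$ and multiplying by $\varepsilon^{d-2}$ already yields the intermediate estimate
\begin{equation*}
\varepsilon^d|D^\xi_\varepsilon u(i)|^2 \;\le\; |\xi|^2\,\varepsilon^{d-1}\int_0^\varepsilon |\nabla u(i+t\xi)|^2\,dt.
\end{equation*}

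The second step, which I expect to be the main technical point, is to dominate this one-dimensional slice integral by the $d$-dimensional tube integral. To do this I would average the pointwise FTC identity over $y$ in a suitable set: letting $D\subset\xi^\perp$ be the $(d-1)$-dimensional disk of radius $\varepsilon/2$, integration over $D$ of the inequality displayed above, together with the change of variables $(y,t)\mapsto x=i+y+t\xi$ (Jacobian $|\xi|$, whose image is a cylinder contained in the tube), converts the right-hand side into $\varepsilon|\xi|\int_{\mathrm{tube}}|\nabla u|^2\,dx$. It remains to reconstruct $|u(i+\varepsilon\xi)-u(i)|^2$ from the family of shifted differences $|u(i+y+\varepsilon\xi)-u(i+y)|^2$. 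For this I would write the telescoping identity
\begin{equation*}
u(i+\varepsilon\xi)-u(i) = [u(i+\varepsilon\xi)-u(i+y+\varepsilon\xi)] + [u(i+y+\varepsilon\xi)-u(i+y)] + [u(i+y)-u(i)],
\end{equation*}
valid because the three connecting segments all lie in the tube (where $u$ is smooth); then $|u(i+\varepsilon\xi)-u(i)|^2 \le 3(|A|^2+|B|^2+|C|^2)$ with each term estimated by FTC, Cauchy–Schwarz, and the bound $|y|\le \varepsilon/2$. Averaging over $y\in D$ and dividing by $|D|\asymp \varepsilon^{d-1}$ gives the desired inequality, with the constant depending only on $d$, $|\xi|$ and the geometry of the tube (hence, since $\xi\in V$ with $V$ finite and fixed, only on $d$).

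The main obstacle is the bookkeeping in the averaging/telescoping step: the endpoint terms $A$ and $C$ involve short transverse paths and naively give rise to integrals with singular weights (from the change of variables along $z=sy$). The cleanest way to keep the constant independent of $u$ is to parametrise the averaging region as a full $d$-dimensional ball $B_{\varepsilon/2}$ (rather than a $(d-1)$-dimensional disk) so that, after Fubini and translating the segment by $y$, each of the three pieces can be bounded directly by $\int_{\mathrm{tube}}|\nabla u|^2\,dx$ up to constants depending only on $d$, and then divided by $|B_{\varepsilon/2}|\asymp \varepsilon^d$ to recover the correct scaling.
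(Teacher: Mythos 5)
Your first two observations---that the hypothesis $\mathrm{dist}([i,i+\varepsilon\xi], M) > \varepsilon$ places the closed $\varepsilon$-tube inside $\Omega\setminus M$ so that $u$ is $C^\infty$ there, and that FTC plus Cauchy--Schwarz gives $|u(i+y+\varepsilon\xi)-u(i+y)|^2 \le \varepsilon|\xi|^2\int_0^\varepsilon|\nabla u(i+y+t\xi)|^2\,dt$---match the opening of the paper's proof. Where you diverge is in the family of one-dimensional paths over which you average. The paper does not shift the segment in parallel and then telescope; it uses a $(d-1)$-parameter family of piecewise-linear ``tent'' curves $\gamma_\nu$, running from $i$ up to $i+\tfrac12\varepsilon\xi+\tfrac12\nu$ and back down to $i+\varepsilon\xi$, with $\nu$ in the perpendicular disk $H^\xi_\varepsilon(i)$ at the midpoint. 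Because every such curve already joins $i$ to $i+\varepsilon\xi$, the FTC estimate applies directly to $|D^\xi_\varepsilon u(i)|^2$ for each $\nu$, and no telescoping and no transverse pieces arise.

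The transverse pieces $A$ and $C$ are exactly where your argument has a genuine gap, and the fix you propose---averaging over the $d$-ball $B_{\varepsilon/2}$ instead of the $(d-1)$-disk---does not remove it. After FTC and Cauchy--Schwarz along the transverse segment, $|A(y)|^2\le|y|^2\int_0^1|\nabla u(i+\varepsilon\xi+sy)|^2\,ds$; averaging over $y\in B_{\varepsilon/2}$, Fubini, and the change of variables $z=sy$ produce a weight $\sim|z|^{1-d}$ at the fixed endpoint $i+\varepsilon\xi$, which is integrable but unbounded, so the averaged transverse term cannot be dominated by $\varepsilon^{2-d}\int_{\mathrm{tube}}|\nabla u|^2\,dx$ with a constant depending only on $d$. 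Concretely, take $M=\emptyset$, $\psi$ a standard bump with $\psi(0)=1$, and $u(x)=\psi((x-i-\varepsilon\xi)/\eta)$ with $\eta\ll\varepsilon$: then $\varepsilon^d|D^\xi_\varepsilon u(i)|^2=\varepsilon^{d-2}$ while $\int_{\mathrm{tube}}|\nabla u|^2\sim\eta^{d-2}$, so for $d\ge 3$ and $\eta\to 0$ no such constant can work. The concentration of the transverse paths at a single fixed point is therefore not a bookkeeping artefact but reflects the failure of pointwise control in $W^{1,2}$ for $d\ge3$. You should also be aware that the paper's final Fubini step, asserting $\int_{H^\xi_\varepsilon(i)}\int_{\gamma_\nu}|\nabla u|^2\,d\mathcal{H}^1\,d\mathcal{H}^{d-1}(\nu)\le C\int_{\mathrm{tube}}|\nabla u|^2\,dx$ on the grounds that the tent curves are disjoint, runs into the same degeneracy---the map $(\nu,t)\mapsto\gamma_\nu(t)$ has Jacobian $\sim t^{d-1}\varepsilon$ vanishing at the shared endpoints, and the same bump example is in tension with that inequality. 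Your identification of the obstacle was thus correct, but closing it would require a quantitative hypothesis on $\nabla u$ near $i$ and $i+\varepsilon\xi$ (or a reformulation of the statement), not a different averaging region.
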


\begin{proposition} \label{GammalimsupLemma}
\begin{align*}
F^{\prime\prime}(u) \leq F(u).
\end{align*}
\end{proposition}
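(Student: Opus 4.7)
The plan is to prove the $\Gamma$-limsup inequality in two stages. Since $F''$ is $L^1$-lower semicontinuous by general $\Gamma$-convergence theory, and since $F$ is continuous along the convergence supplied by Lemma~\ref{ApproximationLemma} (using the quadratic growth \eqref{fbounds} of $f$ together with the bound $\varphi(x,\nu)\le c^*\|\nu\|_1$ inherited from the comparison $F_\varepsilon\le H_\varepsilon$ and the result for the elastic energies), a standard density argument reduces the problem to constructing a recovery sequence for each $u\in\mathcal{D}_2(\Omega)$, the general case following by a diagonal extraction.

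For such a $u$, whose jump set lies in a finite union of $C^1$-manifolds $M\supset S(u)$, I would apply Besicovitch's Covering Theorem to the finite Radon measure $\mu=f(\cdot,\nabla u)\mathcal{L}^d+\varphi(\cdot,\nu_u)\mathcal{H}^{d-1}\lfloor S(u)$. Fix $\delta>0$. The fine cover consists of two families: (i) cubes $Q^{e_d}_\rho(x_0)\subset\Omega\setminus M$ centred at bulk points at which, by Lemma~\ref{EqualityBulkLemma} and \eqref{Definition f}, the quantity $\rho^{-d}\lim_{\eta\to 0}\limsup_\varepsilon m^{F_\varepsilon}_{\varepsilon,\eta}(u,Q^{e_d}_\rho(x_0))$ is within $\delta$ of $f(x_0,\nabla u(x_0))$; and (ii) cubes $Q^{\nu_u(x_0)}_\rho(x_0)$ at jump points at which, by Lemma~\ref{EqualitySurfaceLemma} together with \eqref{Spindensity} and \eqref{infeq}, the normalised surface density is within $\delta$ of $\varphi(x_0,\nu_u(x_0))$. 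Besicovitch combined with a truncation at a finite $N$ then produces disjoint closed cubes $Q_1,\dots,Q_N$ with $\mu\bigl(\Omega\setminus\bigcup_k Q_k\bigr)<\delta$, and for each $k$ a $\delta$-almost optimal competitor $w^k_\varepsilon\in\mathcal{PC}_\varepsilon(\Omega)$ for the corresponding $m^{F_\varepsilon}_{\varepsilon,\eta}$-problem.

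The global competitor is assembled by setting $v_\varepsilon(i)=w^k_\varepsilon(i)$ if $i\in Z_\varepsilon(Q_k)$ and $v_\varepsilon(i)=u(i)$ otherwise. Because the $m^{F_\varepsilon}_{\varepsilon,\eta}$-problem forces $w^k_\varepsilon$ to coincide with the prescribed boundary datum on the full $\eta$-collar $(Q_k)_\eta$, as soon as $\varepsilon\max_{\xi\in V}|\xi|<\eta$ no pairwise interaction straddles $\partial Q_k$ with mismatched endpoints. The total energy then decomposes as follows. (a) The interior sums $\sum_{k}F_\varepsilon(w^k_\varepsilon,Q_k)$, whose $\limsup$ as $\varepsilon\to 0$, then $\eta\to 0$, is bounded by $\mu\bigl(\bigcup_k Q_k\bigr)+C\delta\le F(u)+C\delta$ by the choice of the cubes. (b) Bonds meeting an $\varepsilon$-tubular neighbourhood of $M\cup\bigcup_k\partial Q_k$, whose lattice cardinality is of order $\varepsilon^{1-d}\mathcal{H}^{d-1}(M\cup\bigcup_k\partial Q_k)$ by Proposition~\ref{Neihbourhoodcardlemma} and whose per-bond energy is at most $c^*/\varepsilon$, giving total contribution of order $\varepsilon$. (c) Bonds sitting entirely in $\Omega\setminus\bigcup_k Q_k$ and away from $M$, controlled by Lemma~\ref{UpperboundDxi} through $C\int_{\Omega\setminus\bigcup_k Q_k}|\nabla u|^2\,\mathrm{d}x$, which is arbitrarily small by absolute continuity since $\nabla u\in L^2(\Omega)$ and the complement has small Lebesgue measure. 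Sending $\varepsilon\to 0$, then $\eta\to 0$, then $\delta\to 0$ and diagonalising produces a recovery sequence $v_\varepsilon\to u$ in $L^1(\Omega)$ with $\limsup_\varepsilon F_\varepsilon(v_\varepsilon)\le F(u)$.

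The principal obstacle is the gluing across the cube boundaries and near $M$. The surface cubes impose boundary data $u_{x_0,\nu}^{u^+,u^-}$ (the piecewise constant flat jump) rather than the true $u$, so one has to show that the mismatch inside the $\eta/2$-collar is negligible; this is precisely why Lemma~\ref{EqualitySurfaceLemma} compares $m^{F_\varepsilon}_{\varepsilon,\eta/2}$ with $m^{F_\varepsilon}_{\varepsilon,\eta}$, leaving a collar of width $\eta/2$ in which to patch, and the cost of this patch is handled by the same truncation-and-cut argument that was used to pass from $u_\varepsilon$ to $v_\varepsilon$ in Lemma~\ref{LiminfSurfaceLemma}.
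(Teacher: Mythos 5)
Your overall strategy matches the paper's closely: reduce by density to $u\in\mathcal{D}_2(\Omega)$ using lower semicontinuity of $F''$ and continuity of $F$ along the approximation; cover $\Omega$ up to a small set by Besicovitch cubes adapted to bulk and jump points; paste together almost-optimal local competitors; estimate the leftover by the neighbourhood cardinality estimate and Lemma~\ref{UpperboundDxi}. The use of Lemma~\ref{EqualityBulkLemma}, Lemma~\ref{EqualitySurfaceLemma} and the $\eta/2$ versus $\eta$ collar trick is also exactly what the paper does.

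There is, however, a genuine problem in your item~(b). Counting bonds in the $\varepsilon$-tube around $M\cup\bigcup_k\partial Q_k$ gives cardinality $\sim\varepsilon^{1-d}\,\mathcal{H}^{d-1}\bigl(M\cup\bigcup_k\partial Q_k\bigr)$, and the per-bond energy $\varepsilon^d\cdot c^*\varepsilon^{-1}=c^*\varepsilon^{d-1}$, so the product is $O\bigl(\mathcal{H}^{d-1}(M\cup\bigcup_k\partial Q_k)\bigr)$, \emph{not} $O(\varepsilon)$; the $\varepsilon$'s cancel. This is not small for two independent reasons. First, $\mathcal{H}^{d-1}(M)$ does not tend to zero. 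Second, $\sum_j\rho_j^{d-1}$ actually diverges as $\delta\to 0$: for the bulk cubes $\sum_j\rho_j^d\lesssim|\Omega|$ and $\rho_j<\delta$, so $\sum_j\rho_j^{d-1}\gtrsim\delta^{-1}\sum_j\rho_j^d$. Thus the crude per-bond bound $c^*/\varepsilon$ applied to all bonds near $\bigcup_k\partial Q_k$ would give an error that is not controllable. The paper avoids both traps by splitting the complement differently: only bonds near $M$ \emph{and} outside the cubes use the cardinality estimate, and there one needs the covering to guarantee $\mathcal{H}^{d-1}\bigl(S(u)\setminus\bigcup_k Q_k\bigr)<\delta$ (this is condition (ii) of the paper, obtained precisely because the surface cubes are chosen by a \emph{separate} application of Besicovitch to $\mathcal{H}^{d-1}\lfloor_{S(u)}$); bonds near $\bigcup_k\partial Q_k$ but away from $M$ have no anomalous per-bond bound because both endpoints carry the smooth value of $u$, so they are controlled by Lemma~\ref{UpperboundDxi} and absorbed into your item~(c). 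Relatedly, your single application of Besicovitch to $\mu=f(\cdot,\nabla u)\mathcal{L}^d+\varphi(\cdot,\nu_u)\mathcal{H}^{d-1}\lfloor_{S(u)}$ controls only the $\mu$-measure of the uncovered set, and since $f$ need not be bounded below by a positive constant this does not yield that the uncovered region has small \emph{Lebesgue} measure, which you then invoke in (c); either add $\mathcal{L}^d$ to the reference measure or, as in the paper, run the covering twice.
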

\begin{proof} Since truncation lowers the energy we can assume that $u \in SBV^2(\Omega) \cap L^\infty(\Omega)$. By Lemma \ref{ApproximationLemma} we can assume that $u \in \mathcal{D}_2(\Omega)$, i.e. there exists a finite union of compact $C^1$-manifolds with (possibly empty) $C^1$-boundaries such that $S(u) \subset M$, $\mathcal{H}^{d-1}(M\setminus S(u))=0$, $u \in C^\infty(\Omega\setminus \overline{M})$ and for $ \mathcal{H}^{d-1}$-a.e. $ x \in S(u)$  and  $\rho >0$  small enough   $u_j \in C^\infty(\overline{Q_\rho^\pm(x_0)})$. In fact assume, that we proved Lemma \ref{GammalimsupLemma} for such functions, then we know, that for every $u \in SBV^2(\Omega)$ there exists a sequence $\{u_j\}_j$ converging to $u$ with respect to the $L^1(\Omega)$-topology, satisfying the above properties and such that $\mathcal{H}^{d-1}(J(u) \triangle J(u_j)) \to 0$ and $\nabla u_j \to \nabla u $ with respect to the strong $L^2(\Omega;\mathbb{R}^d)$ topology. Since $f(x,\cdot)$ is quasiconvex, it is locally lipschitz continuous and it satisfies (\ref{fbounds}). Therefore
\begin{align*}
\lim_{j \to \infty} \int_\Omega f(x,\nabla u_j(x))\mathrm{d}x = \int_\Omega f(x,\nabla u(x))\mathrm{d}x. 
\end{align*}
Since $\mathcal{H}^{d-1}(J(u) \triangle J(u_j)) \to 0$, noting $\mathcal{H}^{d-1}(S(u) \setminus J(u))=0$ for all $u \in BV(\Omega)$, we have that
\begin{align*}
\lim_{j \to 0} \int_{S(u_j)}\varphi(x,\nu_{u_j}(x))\mathrm{d}\mathcal{H}^{d-1} = \int_{S(u)}\varphi(x,\nu_{u}(x))\mathrm{d}\mathcal{H}^{d-1}. 
\end{align*}
Therefore by the lower-semicontinuity of $F^{\prime\prime}$ with respect to the strong $L^1(\Omega)$-topology we have
\begin{align*}
F^{\prime\prime}(u) \leq \liminf_{j \to \infty} F^{\prime\prime}(u_j) &= \liminf_{j \to \infty}\left( \int_\Omega f(x,\nabla u_j(x))\mathrm{d}x + \int_{S(u_j)}\varphi(x,\nu_{u_j}(x))\mathrm{d}\mathcal{H}^{d-1} \right) \\& = \int_\Omega f(x,\nabla u(x))\mathrm{d}x +  \int_{S(u)}\varphi(x,\nu_{u}(x))\mathrm{d}\mathcal{H}^{d-1}=F(u)
\end{align*}
which yields the claim. It remains to prove (\ref{GammalimsupLemma}) for $u \in \mathcal{D}_2(\Omega)$. To this end let $\delta>0$ and define the set of all finite families of closed cubes satisfying (i)-(iv)
\begin{align*}
\mathcal{Q}_\delta =\{ Q_j\}_{j=1}^N
\end{align*}
\begin{itemize}
\item[(i)] $Q_j = Q^{\nu_j}_{\rho_j}(x_j)$, $\rho_j< \delta$ for all $j \in \{1,\ldots,N\}$, $Q_j \cap Q_k =\emptyset $ for all  $j,k \in \{1,\ldots,N\}$, $j \neq k$ For each $j \in \{1,\ldots,N\}$ we have either $x_j \in S(u)$ or $Q_j \cap S(u) = \emptyset$.
\item[(ii)] $\displaystyle \mathcal{L}^d(\Omega \setminus  \bigcup_{j=1}^{N} Q_j) <\delta$, $\mathcal{H}^{d-1}(S(u) \setminus  \bigcup_{j=1}^{N} Q_j) <\delta$, $\mathcal{H}^{d-1}(S(u) \cap \partial Q_j)=0$ and $\mathcal{H}^{d-1}(S(u) \cap Q_j) \geq \frac{1}{2}\rho_j^{d-1}$ for all $j \in \{1,\ldots,N\}$, where $S(u) \cap Q_j \neq \emptyset$.
\item[(iii)] If $x_j \in S(u)$, then $\nu_j = \nu_u(x_0)$, $\displaystyle \varphi(x_j,\nu_j) \leq\frac{1}{\rho^{d-1}}\int_{S(u)\cap Q_j}\varphi(x,\nu_u(x))\mathrm{d}\mathcal{H}^{d-1} + \delta $ and
\begin{align*}
\lim_{\eta \to 0} \lim_{\varepsilon \to 0} m_\varepsilon^{\frac{\eta}{2}}(u,Q_j) \leq \lim_{\eta \to 0} \lim_{\varepsilon \to 0} m_\varepsilon^\eta(u_{x_0,\nu},Q_j) + \delta \rho^{d-1} .
\end{align*}
\item[(iv)] If $x_j \notin S(u)$, then $\displaystyle f(x_0,\nabla u(x_0)) \leq \frac{1}{\rho^d} \int_{Q_j}f(x,\nabla u(x))\mathrm{d}x +\delta $ and 
\begin{align*}
\lim_{\eta \to 0} \lim_{\varepsilon \to 0} m_\varepsilon^\eta(u,Q_j) \leq \lim_{\eta \to 0} \lim_{\varepsilon \to 0} m_\varepsilon^\eta(u(x_0)+ \nabla u(x_0)(\cdot-x_0),Q_j) + \delta \rho^{d} .
\end{align*}
\end{itemize}
The existence of such a family is guaranteed by the Besicovitch-Covering Theorem and the fact that (iii)-(iv) hold for $\mathcal{H}^{d-1}$-a.e. $x \in S(u)$ and $\mathcal{L}^d$-a.e. $x \in \Omega$ for $\rho>0$ small enough (cf. \cite{ambrosio2000functions}, Lemma \ref{EqualityBulkLemma} and Lemma \ref{EqualitySurfaceLemma}). This is done by fixing $\eta>0$ such that $|S(u)_\eta| <\frac{\delta}{2}$ and applying once Besicovitch-Covering Theorem to the measure $\mu_1 = \mathcal{H}^{d-1}\lfloor_{S(u)}$ and the family of cubes
\begin{align*}
\mathcal{Q}_1 = \Bigg\{Q = Q^{\nu}_{\rho}(x_0), x_0 \in S(u), \nu = \nu_u(x_0), \rho_j< \delta \wedge \eta \text{ for all } j \in \{1,\ldots,N\}, \Bigg\}
\end{align*}
satisfying (iii) and $\mathcal{H}^{d-1}(S(u) \cap \partial Q_j)=0$ and $\mathcal{H}^{d-1}(S(u) \cap Q_j) \geq \frac{1}{2}\rho_j^{d-1}$ for all $j \in \{1,\ldots,N\}$. Applying Besicovitch-Covering Theorem one more time to the measure $\mu_2 = \mathcal{L}^{d}$ and the family of cubes
\begin{align*}
\mathcal{Q}_2 = \Bigg\{Q = Q^{\nu}_{\rho}(x_0), x_0 \in \Omega, \nu \in S^{d-1}, \rho_j< \delta, \text{ for all } j \in \{1,\ldots,N\},  Q\subset \Omega \setminus (S(u))_\eta\Bigg\}
\end{align*}
satisfying (iv) we get to disjoint finite families $\{Q_j^1\}_{j=1}^{N_1}, Q^1_j \subset (S(u))_\eta$ for all $j \in \{1,\ldots,N_1\}$ and  $\{Q_j^2\}_{j=1}^{N_2}, Q^2_j \subset  \Omega \setminus (S(u))_\eta$ for all $j \in \{1,\ldots,N_2\}$ such that
\begin{align*}
\mathcal{H}^{d-1}(S(u) \setminus  \bigcup_{j=1}^{N_1} Q^1_j) <\delta,  \mathcal{L}^d\left(\left(\Omega \setminus (S(u))_\eta\right)\setminus  \bigcup_{j=1}^{N_2} Q^2_j \right) < \frac{\delta}{2}.
\end{align*}
Now the family $\{Q_j\}_{j=1}^{N}=\{ Q^1_j\}_{j=1}^{N_1} \cup  \{Q^2_j\}_{j=1}^{N_2}$ satisfies (i)-(iv).
For each $j \in \{1,\ldots,N\} $ such that $x_j \in S(u)$ let $u_{\varepsilon,j}^{\eta,\delta} \in \mathcal{PC}_\varepsilon(\Omega)$ be such that $||u_{\varepsilon,j}^{\eta,\delta}||_\infty \leq ||u||_\infty $, $(u_{\varepsilon,j}^{\eta,\delta})_i = u_i $ on $(Q_j)_\eta$ and
\begin{align*}
F_\varepsilon(u_{\varepsilon,j}^{\eta,\delta},Q_j)\leq m^\eta_\varepsilon(u,Q_j) + \delta\rho_j^{d-1}
\end{align*}
and for each $ j \in \{1,\ldots,N\}$ such that $Q_j \cap S(u) =\emptyset $ let $u_{\varepsilon,j}^{\eta,\delta} \in \mathcal{PC}_\varepsilon(\Omega)$ be such that $||u_{\varepsilon,j}^{\eta,\delta}||_\infty \leq ||u||_\infty $, $(u_\varepsilon^{\eta,\delta})_i = u_i $ on $(Q_j)_\eta$ and
\begin{align*}
F_\varepsilon(u_{\varepsilon,j}^{\eta,\delta},Q_j)\leq m^\eta_\varepsilon(u,Q_j) + \delta\rho_j^{d}.
\end{align*}
Define $u_{\varepsilon}^{\delta,\eta} \in \mathcal{PC}_\varepsilon(\Omega) $ by
\begin{align*}
u_{\varepsilon}^{\delta,\eta}(i) =\begin{cases}u_{\varepsilon,j}^{\eta,\delta}(i) &i \in Z_\varepsilon( Q_j), j \in \{1,\ldots,N\}\\
u(i)&\text{otherwise}.
\end{cases}
\end{align*} 
Note that
\begin{align*}
\mathrm{dist}(Q_j,Q_k) >d_\delta >0 \text{ for all } j \neq k,
\end{align*}
since $Q_j$ is a finite family of closed cubes such that $Q_j \cap Q_k = \emptyset$ for all $j \neq k$ and therefore
\begin{align*}
F_\varepsilon(u_{\varepsilon}^{\delta,\eta},Q_j) = F_\varepsilon(u_{\varepsilon,j}^{\eta,\delta},Q_j)
\end{align*}
for $\varepsilon >0$ small enough. Hence we have that
\begin{align*}
F_\varepsilon(u_{\varepsilon}^{\delta,\eta})& = \sum_{j =1}^N F_\varepsilon(u_{\varepsilon}^{\delta,\eta},Q_j) + F_\varepsilon(u_{\varepsilon}^{\delta,\eta}, \Omega \setminus \bigcup_{j=1}^N Q_j) \\&= \underset{x_j \in S(u)}{\sum_{j=1}^N} F_\varepsilon(u_{\varepsilon}^{\delta,\eta},Q_j) + \underset{x_j \notin S(u)}{\sum_{j=1}^N} F_\varepsilon(u_{\varepsilon}^{\delta,\eta},Q_j) + F_\varepsilon(u_{\varepsilon}^{\delta,\eta}, \Omega \setminus \bigcup_{j=1}^N Q_j) \\& \leq \underset{x_j \in S(u)}{\sum_{j=1}^N }( m^\eta_\varepsilon(u,Q_j)+\delta \rho_j^{d-1} ) + \underset{x_j \notin S(u)}{\sum_{j=1}^N }( m^\eta_\varepsilon(u,Q_j) +\delta \rho_j^{d} ) + F_\varepsilon(u_{\varepsilon}^{\delta,\eta}, \Omega \setminus \bigcup_{j=1}^N Q_j).
\end{align*}
Now note that by (ii) and (iii) and using the definition of $\varphi(x_j,\nu_j)$ we have for $\varepsilon,\eta>0$ small enough
\begin{align}\label{SurfaceLimsup}
\begin{split}
\underset{x_j \in S(u)}{\sum_{j=1}^N }( m^\eta_\varepsilon(u,Q_j)+\delta \rho_j^{d-1} ) &\leq \underset{x_j \in S(u)}{\sum_{j=1}^N}\left( \rho_j^{d-1}\varphi(x_j,\nu_j) + C\delta\rho_j^{d-1}\right) \\&\leq \underset{x_j \in S(u)}{\sum_{j=1}^N} \left(\int_{S(u)\cap Q_j} \varphi(x,\nu_u(x))\mathrm{d}\mathcal{H}^{d-1} +C\delta\rho_j^{d-1} \right)\\&\leq \int_{S(u)}\varphi(x,\nu_u(x))\mathrm{d}\mathcal{H}^{d-1} + C\delta\mathcal{H}^{d-1}(S(u)).
\end{split}
\end{align}
Now by (iv) we have
\begin{align}\label{BulkLimsup}
\begin{split}
\underset{x_j \notin S(u)}{\sum_{j=1}^N }( m^\eta_\varepsilon(u,Q_j)+\delta \rho_j^{d} ) &\leq \underset{x_j \notin S(u)}{\sum_{j=1}^N}\left( \rho_j^{d}\varphi(x_j,\nu_j) + C\delta\rho_j^{d}\right) \\&\leq \underset{x_j \notin S(u)}{\sum_{j=1}^N} \left(\int_{ Q_j} f(x,\nabla u(x))\mathrm{d}x +C\delta\rho_j^{d} \right)\\&\leq \int_{\Omega}f(x,\nabla u(x))\mathrm{d}x + C\delta |\Omega|.
\end{split}
\end{align}
Now the last term can be estimated by splitting into points which are close to $S(u)$ (which is well behaved, since it is contained in $M$ and $\mathcal{H}^{d-1}(M\setminus S(u))=0$) and points which are far away. More precisely, setting $\displaystyle R = \sup_{\xi \in V} |\xi|$, we have
\begin{align*}
F_\varepsilon(u_{\varepsilon}^{\delta,\eta}, \Omega \setminus \bigcup_{j=1}^N Q_j) = &\sum_{\xi \in V} \underset{\mathrm{dist}(i,S(u))> 2R\varepsilon}{\sum_{ i \in Z_\varepsilon\left(\Omega \setminus \bigcup_{j=1}^N Q_j\right) }} \varepsilon^d W^\varepsilon_{i,\xi}(D^\xi_\varepsilon u_{\varepsilon}^{\delta,\eta}(i)) \\&+ \sum_{\xi \in V} \underset{\mathrm{dist}(i,S(u))\leq  2R\varepsilon}{\sum_{ i \in Z_\varepsilon\left(\Omega \setminus \bigcup_{j=1}^N Q_j\right) }} \varepsilon^d W^\varepsilon_{i,\xi}(D^\xi_\varepsilon u_{\varepsilon}^{\delta,\eta}(i))
\end{align*}
Now, using Lemma \ref{UpperboundDxi} and noting $D^\xi_\varepsilon u_{\varepsilon}^{\delta,\eta} = D^\xi_\varepsilon u$ on that set, we have
\begin{align}\label{Bulkpartfuori}
\begin{split}
\sum_{\xi \in V} \underset{\mathrm{dist}(i,S(u))> 2R\varepsilon}{\sum_{ i \in Z_\varepsilon\left( \Omega \setminus \bigcup_{j=1}^N Q_j \right)}} \varepsilon^d W^\varepsilon_{i,\xi}(D^\xi_\varepsilon u_{\varepsilon}^{\delta,\eta}(i)) &\leq \sum_{\xi \in V} \underset{\mathrm{dist}(i,S(u))> 2R\varepsilon}{\sum_{ i \in Z_\varepsilon\left( \Omega \setminus \bigcup_{j=1}^N Q_j \right)}} \varepsilon^d |D^\xi_\varepsilon u(i)|^2 \\&\leq C\int_{(\Omega \setminus \bigcup_{j=1}^N Q_j)_{(2R+1)\varepsilon}}|\nabla u(x)|^2 \mathrm{d}x.
\end{split}
\end{align}
Whereas, using Lemma \ref{Neihbourhoodcardlemma}, (ii) and the fact that $|(M)_{2R\varepsilon} \setminus \bigcup_{j=1}^N Q_j | \leq C\varepsilon\mathcal{H}^{d-1}(M \setminus \bigcup_{j=1}^N Q_j) +C(u)\varepsilon^2$, we obtain
\begin{align}\label{Surfacepartfouri}
\begin{split}
\sum_{\xi \in V} \underset{\mathrm{dist}(i,S(u))\leq  2R\varepsilon}{\sum_{ i \in Z_\varepsilon\left( \Omega \setminus \bigcup_{j=1}^N Q_j\right) }} \varepsilon^d W^\varepsilon_{i,\xi}(D^\xi_\varepsilon u_{\varepsilon}^{\delta,\eta}(i))&\leq C\varepsilon^{d-1}\# \left\{ \varepsilon\mathbb{Z}^d \cap (M)_{2R\varepsilon} \setminus \bigcup_{j=1}^N Q_j\right\} \\&\leq C \mathcal{H}^{d-1}(S(u) \setminus \bigcup_{j=1}^N Q_j) +C(u)\varepsilon \leq C\delta.
\end{split}
\end{align}
Using that $u \in SBV^2(\Omega)$ we have that
\begin{align*}
\sup_{\varepsilon >0} F_\varepsilon(u_{\varepsilon}^{\delta,\eta} ) <+\infty, \quad \sup_{\varepsilon>0} ||u_{\varepsilon}^{\delta,\eta}||_{L^\infty(\Omega)} < +\infty.
\end{align*}
Applying Lemma \ref{Compactnesslemma} we have that up to subsequences $u_{\varepsilon}^{\delta,\eta}$ converges to $u^{\delta,\eta}$ with respect to the strong $L^1(\Omega)$-topology as $\varepsilon \to 0$. And we have 
\begin{align}\label{udetabound}
\sup_{\eta>0}\left( \int_\Omega |\nabla u^{\delta,\eta}|^2\mathrm{d}x + \mathcal{H}^{d-1}(S(u^{\delta,\eta})) + ||u^{\delta,\eta}||_{L^\infty(\Omega)}\right) < +\infty.
\end{align}
Therefore by the Ambrosio Compactness Theorem (cf. \cite{ambrosio1989variational} Thm. 3.1) (up to subsequences) $u^{\delta,\eta}$ converges to $u^\delta$ with respect to the strong $L^1(\Omega)$-topology. We now want to prove that $u^\delta \to u$ with respect to the strong $L^1(\Omega)$-topology as $\delta \to 0$. We estimate, using the fact that $u^\delta = u $ on $\displaystyle \Omega \setminus \bigcup_{j=1}^N\Omega_j$ and Poincar\' e's Inequality,
\begin{align*}
||u^\delta-u||_{L^1(\Omega)} = \sum_{j=1}^N ||u^\delta-u||_{L^1(\Omega)} &\leq C\delta |Du^\delta-Du|(\bigcup_{j=1}^N\overline{Q_j}) \\&\leq C\delta\left(|Du^\delta|(\Omega) + |Du|(\Omega)\right).
\end{align*}
In view of (\ref{udetabound}) we have that $|Du^\delta|(\Omega)$ is bounded and we conclude that $u^\delta \to u$ with respect to the strong $L^1(\Omega)$ topology. Now
\begin{align*}
F^{\prime\prime}(u)&\leq \liminf_{\delta \to 0} F^{\prime\prime}(u^\delta) \leq \liminf_{\delta \to 0}\liminf_{\eta \to 0} F^{\prime\prime}(u^{\delta,\eta}) \leq \liminf_{\delta \to 0}\liminf_{\eta \to 0} \limsup_{\varepsilon \to 0} F_\varepsilon(u_\varepsilon ^{\delta,\eta}).
\end{align*}
Note that by (\ref{SurfaceLimsup})-(\ref{Surfacepartfouri}) and the dominated convergence theorem we have
\begin{align*}
\limsup_{\varepsilon \to 0}F_\varepsilon(u_\varepsilon ^{\delta,\eta}) \leq C&\left((1+ \mathcal{H}^{d-1}(S(u))+|\Omega|)\delta +\int_{\Omega \setminus \bigcup_{j=1}^{N} Q_j}|\nabla u(x)|^2 \mathrm{d}x \right) \\&+\int_{\Omega}f(x,\nabla u(x))\mathrm{d}x + \int_{S(u)}\varphi(x,\nu_u(x))\mathrm{d}\mathcal{H}^{d-1}\\&\leq F(u) + C\left((1+ \mathcal{H}^{d-1}(S(u))+|\Omega|)\delta +\int_{\Omega \setminus \bigcup_{j=1}^{N} Q_j}|\nabla u(x)|^2 \mathrm{d}x \right).
\end{align*}
Applying once more the dominated convergence theorem and (ii) we have that
\begin{align*}
|\Omega \setminus \bigcup_{j=1}^{N} Q_j| < \delta \text{ and therefore } \int_{\Omega \setminus \bigcup_{j=1}^{N} Q_j}|\nabla u(x)|^2 \mathrm{d}x \to 0
\end{align*}
as $\delta \to 0$. This yields the claim.
\end{proof}
Lemmas \ref{Neihbourhoodcardlemma} and \ref{UpperboundDxi} are needed in the proof of Proposition \ref{GammalimsupLemma}. We prove them in the following.

 \begin{proof}[Proof of Proposition \ref{Neihbourhoodcardlemma}] The proof follows from Lemma \ref{Cardinalitylemma} and noting that 
 \begin{align*}
 E_\rho = \bigcup_{x \in E} B_\rho(x),
\end{align*}   
so that conditions of Lemma \ref{Cardinalitylemma} are satisfied with $r=\rho$.
 \end{proof}
 \begin{lemma}\label{Cardinalitylemma}
 Let $\varepsilon>0$ and let $E \in \mathcal{B}(\mathbb{R}^d)$ be such that there exists $r>0$ such that for every $x \in E$ there exists a Ball $B_r=B_r^x \subset E$ such that $x \in B_r$. Then 
 \begin{align*}
 \#\left( E \cap \varepsilon\mathbb{Z}^d\right) \leq C\frac{|E|}{(\varepsilon\wedge r)^d},
 \end{align*}
 where $C$ only depends on $d$.
 \end{lemma}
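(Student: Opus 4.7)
The plan is to prove the lemma by a packing argument: for each lattice point $i \in E \cap \varepsilon\mathbb{Z}^d$ I will carve out of $E$ a set $A_i \subset E$ of volume at least $c_d\, s^d$, where $s = \varepsilon \wedge r$, and arrange that the $A_i$ be pairwise disjoint. Summing over $i$ and using $A_i \subset E$ then gives
\[
 \#(E \cap \varepsilon\mathbb{Z}^d)\, c_d\, s^d \;\leq\; \sum_{i \in E \cap \varepsilon\mathbb{Z}^d} |A_i| \;\leq\; |E|,
\]
which is the claimed bound.

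The concrete choice is $A_i := Q_s(i) \cap B_r^i$, where $Q_s(i) = i + (-s/2, s/2)^d$ is the axis-aligned cube of side $s$ centered at $i$ and $B_r^i = B_r(c_i) \subset E$ is the ball provided by hypothesis containing $i$. Pairwise disjointness of the family $\{A_i\}$ is immediate: the cubes $\{Q_s(i)\}_{i \in \varepsilon\mathbb{Z}^d}$ have side $s \leq \varepsilon$ and are centered on lattice points spaced at distance at least $\varepsilon$, so they are themselves disjoint.

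The main step (and the only geometric input) is the lower bound $|A_i| \geq c_d\, s^d$. This is where the two constraints $s \leq \varepsilon$ and $s \leq r$ enter together. Set $a := |c_i - i| \leq r$ and let $n_i := (c_i - i)/a$ (choose $n_i$ arbitrarily if $a = 0$). For the cap of directions $\Sigma := \{v \in S^{d-1} : v \cdot n_i \geq 1/2\}$, which has positive $\mathcal{H}^{d-1}$-measure depending only on $d$, and for $t \in [0, s/2]$, a direct computation gives
\[
 |i + tv - c_i|^2 \;=\; t^2 - 2\,t\,a\,(v \cdot n_i) + a^2 \;\leq\; t^2 - t\,a + a^2.
\]
The function $a \mapsto t^2 - ta + a^2$ on $[0, r]$ attains its maximum at an endpoint, with values $t^2$ at $a=0$ and $r^2 - tr + t^2$ at $a = r$; both are $\leq r^2$ because $t \leq s/2 \leq r$. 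Hence the wedge $W_i := \{i + tv : 0 \leq t \leq s/2,\ v \in \Sigma\}$ is contained in $B_r(c_i) \cap B_{s/2}(i) \subset B_r^i \cap Q_s(i) = A_i$, and in polar coordinates
\[
 |W_i| \;=\; \mathcal{H}^{d-1}(\Sigma) \int_0^{s/2} t^{d-1}\, dt \;=\; c_d\, s^d.
\]

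Putting these ingredients together yields the bound. The only non-routine point is the geometric estimate above; the condition $s \leq r$ guarantees that the wedge fits inside the ball $B_r^i$ even when $i$ lies on the boundary of $B_r^i$, while $s \leq \varepsilon$ guarantees disjointness. Everything else is bookkeeping.
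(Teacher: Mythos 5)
Your proof is correct and follows essentially the same strategy as the paper's: decompose $|E|$ over pairwise disjoint sets attached to the lattice points and lower-bound each by a region of volume $\gtrsim(\varepsilon\wedge r)^d$ contained in $B_r^x\cap Q(x)$. The paper inscribes a small ball $B_{\rho/2}$ with $\rho=r\wedge\tfrac{\varepsilon}{2}$ where you inscribe a cone/wedge of opening $60^\circ$, but this is the same geometric idea and the bookkeeping is equivalent.
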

\begin{proof}We have that 
\begin{align*}
|E| = \sum_{x \in \varepsilon\mathbb{Z}^d \cap E} |E \cap Q_{\varepsilon}(x)|\geq \sum_{x \in \varepsilon\mathbb{Z}^d \cap E} |B^x_r \cap Q_{\varepsilon}(x)| &\geq  \inf_{x \in B_r} |B_r \cap Q_{\varepsilon}(x)|\# \left(\varepsilon\mathbb{Z}^d \cap E\right) \\&=C_d (\varepsilon \wedge r)^d \# \left(\varepsilon\mathbb{Z}^d \cap E\right).
\end{align*}
Where $C_d = 2^{-2d} |B_1|$. It remains to prove
\begin{align*}
\inf_{x \in B_r} |B_r \cap Q_{\varepsilon}(x)| \geq 2^{-2d} |B_1|.
\end{align*}
Note that $Q_{\varepsilon} \supset B_{\frac{\varepsilon}{2}}$ and therefore, letting $\rho = r \wedge \frac{\varepsilon}{2}$, we have
\begin{align*}
\inf_{x \in B_r} |B_r \cap Q_{\varepsilon}(x)| \geq \inf_{x \in B_r} |B_r \cap B_{\frac{\varepsilon}{2}}(x)| \geq   \inf_{x \in B_\rho} |B_\rho \cap B_\rho(x)| \geq |B_{\frac{\rho}{2}}| \geq 2^{-2d}(\varepsilon \wedge r)^d |B_1|,
\end{align*}
where the second to last inequality follows since  we have $B_\rho \cap B_\rho(x) \supset B_{\frac{\rho}{2}}(\frac{x}{2})$ for $x \in B_\rho$. The last inequality follows by a scaling argument and the fact that $\frac{\varepsilon}{2} \wedge \rho \geq \frac{1}{2}(\varepsilon \wedge \rho)$. The claim follows by dividing by $C_d (\varepsilon \wedge r)^d$. 
\begin{figure}[htp]
\centering
\includegraphics{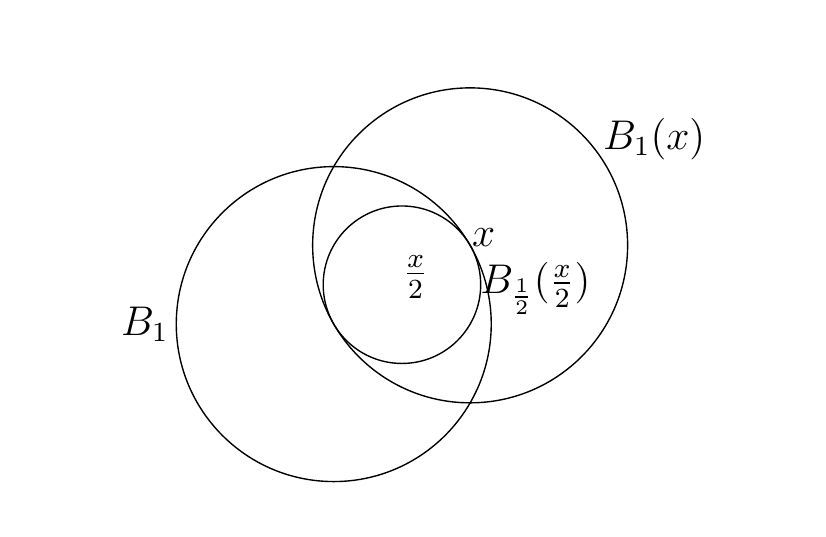}
\caption{Two intersecting balls where the centers are contained in the closure of both balls}
\end{figure}
\end{proof}

\begin{proof}[Proof of \ref{UpperboundDxi}] Let $\gamma : [0,1] \to \mathbb{R}^d $ be a lipschitz-continuous curve such that $\gamma(0)=i,\gamma(1)=i+\varepsilon\xi$, $\gamma([0,1]) \subset ([i,i+\varepsilon\xi])_\varepsilon$ and $\varepsilon\leq|\dot{\gamma}|\leq C|\xi|\varepsilon$. We have that $\mathrm{dist}([i,i+\varepsilon\xi],M) >\varepsilon$ and therefore, defining $v :[0,1] \to \mathbb{R}$ by
$
v(t) = u(\gamma(t))
$,
and using Jensen's Inequality, it holds by the fundamental theorem of calculus
\begin{align}\label{curveineq}
|D^\xi_\varepsilon u(i)|^2 &= \frac{1}{\varepsilon^2|\xi|^2}\left|\int_0^1\dot{v}(t)\mathrm{d}t \right|^2 = \frac{1}{\varepsilon^2|\xi|^2}\left|\int_0^1\nabla u(\gamma(t))\cdot \dot{\gamma}(t)\mathrm{d}t \right|^2  \\& \nonumber\leq \frac{1}{\varepsilon^2|\xi|^2}\int_0^1\left|\nabla u(\gamma(t))\right|^2 |\dot{\gamma}(t)|^2\mathrm{d}t \leq C\int_0^1 \left|\nabla u(\gamma(t))\right|^2 \mathrm{d}t\leq \frac{C}{\varepsilon} \int_\gamma |\nabla u(x)|^2\mathrm{d}\mathcal{H}^1.
\end{align}
The last inequality follows from the area formula and noting that $ \varepsilon\leq |\dot{\gamma}|$. Now, set $\nu_\xi = \frac{\xi}{||\xi||} \in S^{d-1}$ and let $ \nu \in ([i,i+\varepsilon\xi])_\varepsilon \cap \Pi^{\nu_\xi} (i+\frac{\varepsilon\xi}{2}) =H_\varepsilon^\xi(i) $. Note that $\nu_\xi \cdot \nu =0$ and $|\nu|\leq \varepsilon$. We define $\gamma_\nu : (0,1) \to \mathbb{R}^d$ by
\begin{align*}
\gamma_\nu(t) = \begin{cases} x+  t\varepsilon\xi+t\nu &t\in [0,\frac{1}{2})\\
x +t\varepsilon\xi + (1-t)\nu &t \in [\frac{1}{2},1].
\end{cases}
\end{align*}
Note that $\gamma_\nu$ is a lipschitz-continuous curve satisfying $\gamma_\nu(0)=i,\gamma_\nu(1)=i+\varepsilon\xi$, $\gamma_\nu([0,1]) \subset ([i,i+\varepsilon\xi])_\varepsilon$ and $\varepsilon\leq|\dot{\gamma_\nu}|\leq 2|\xi|\varepsilon$, $\gamma_{\nu_1}\cap \gamma_{\nu_2} = \{i,i+\varepsilon\xi\}$ for all $\nu_1 \neq \nu_2$ and therefore we can apply (\ref{curveineq}). Hence, noting that $C\mathcal{H}^{d-1}(H^\xi_\varepsilon(i))\geq \varepsilon^{d-1}$,  we get
\begin{align*}
\varepsilon^{d} |D^\xi_\varepsilon u(i)|^2 \leq \varepsilon C\int_{H_\varepsilon^\xi(i)}|D^\xi_\varepsilon u(i)|^2\mathrm{d}\mathcal{H}^{d-1}(\nu) &\leq C \int_{H_\varepsilon^\xi(i)} \int_{\gamma_\nu} |\nabla u(x)|^2\mathrm{d}\mathcal{H}^1\mathrm{d}\mathcal{H}^{d-1}(\nu) \\&\leq C\int_{([i,i+\varepsilon\xi])_\varepsilon} |\nabla u|^2 \mathrm{d}x
\end{align*}
and the claim follows.
\begin{figure}[htp]
\centering
\includegraphics{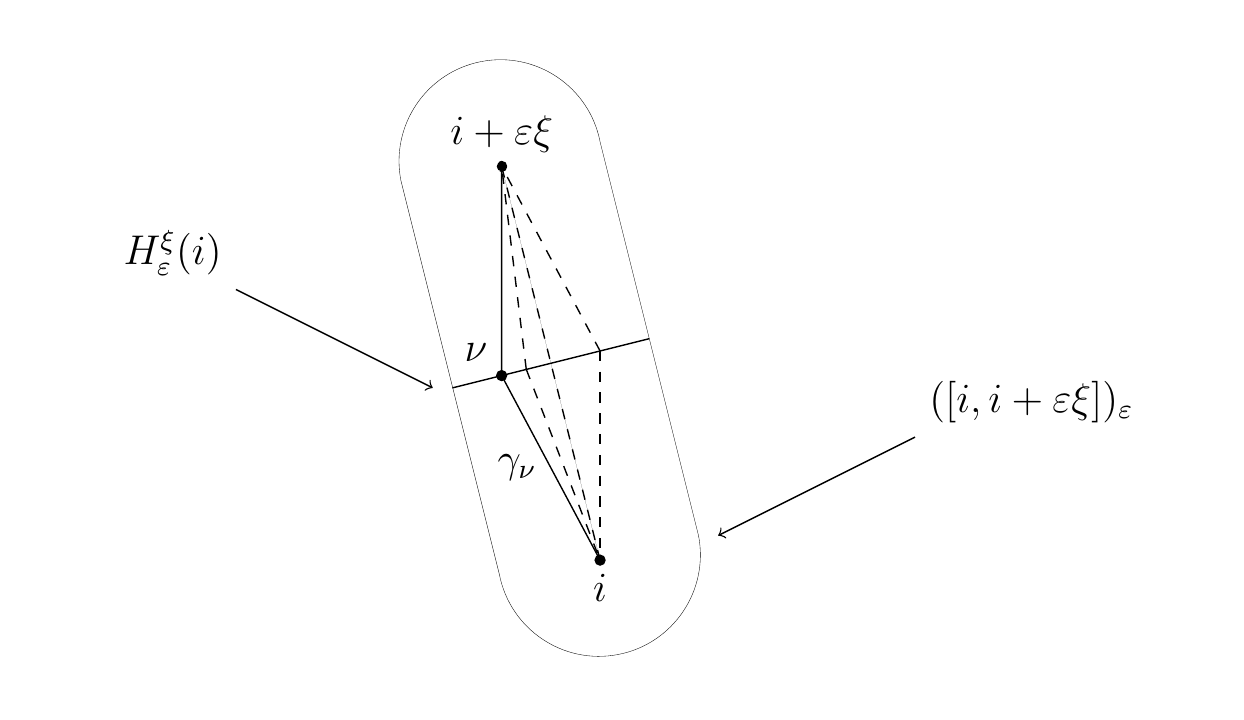}
\caption{The family of curves used to give the upper on $D^\xi_\varepsilon u(i)$ in Lemma \ref{UpperboundDxi}}
\end{figure}
\end{proof}

\begin{proof}[Proof of Lemma \ref{EqualityBulkLemma}] The proof follows essentially the same steps as the one of Lemma \ref{LiminfBulkLemma}. It suffices to prove the claim for $x_0 \in \Omega\setminus M$. Take such an $x_0$ and set $u_0(x) = u(x_0) + \nabla u(x_0)(x-x_0)$. Note that for $\rho>0$ small enough we have that both $u,u_0 \in C^\infty(\overline{Q^\nu_\rho(x_0)})$ and therefore $||\nabla u||_\infty,||\nabla u_0||_\infty \leq C$. Note that $u(x_0) =u_0(x_0)$ and therefore $||u-u_0||_\infty \leq C\rho$. By Lemma \ref{UpperboundDxi} we have that
\begin{align*}
m_\varepsilon^\eta(u,Q^\nu_\rho(x_0)) \leq C\rho^d, \quad m_\varepsilon^\eta(u_0,Q^\nu_\rho(x_0)) \leq C\rho^d.
\end{align*}
Now for every $\delta>0$ and a function $v \in \mathcal{PC}_\varepsilon(\Omega)$ such that $v_i = u_i $ on $(Q^\nu_\rho(x_0))_\eta$ and
\begin{align*}
F_\varepsilon(v,Q^\nu_\rho(x_0)) \leq m_\varepsilon^\eta(u,Q^\nu_\rho(x_0)) + o(\rho^d) \leq C\rho^d
\end{align*}
we can construct, using an analogous cut-off argument as in Lemma \ref{LiminfBulkLemma}, a function $w \in \mathcal{PC}_\varepsilon(\Omega)$ such that $w_i = (u_0)_i $ on $(Q^\nu_\rho(x_0))_\eta$ and
\begin{align*}
F_\varepsilon(w,Q^\nu_\rho(x_0)) \leq F_\varepsilon(v,Q^\nu_\rho(x_0)) + o(\rho^d) +\delta \rho^d.
\end{align*}
Note that in the proof of Lemma \ref{LiminfBulkLemma} it was important that the two functions we perform the cut-off construction with are close in $L^\infty$-norm, which is the case here, in order to let the error we commit by performing the cut-off go to $0$ as $\rho \to 0$.
From this follows 
\begin{align*}
\lim_{\rho \to 0} \frac{1}{\rho^d}\lim_{\eta \to 0} \lim_{\varepsilon \to 0} m_\varepsilon^\eta(u,Q^\nu_\rho(x_0)) \geq \lim_{\rho \to 0} \frac{1}{\rho^d}\lim_{\eta \to 0} \lim_{\varepsilon \to 0} m_\varepsilon^\eta(u(x_0)+\nabla u(x_0)(\cdot-x_0),Q^\nu_\rho(x_0)).
\end{align*}
Exchanging $u$ and $u_0$ and doing the same construction we obtain the other inequality.
\end{proof}

\begin{proof}[Proof of \ref{EqualitySurfaceLemma}] Fix a point in $S(u)$ and let $M$ be as in (\ref{D2def}). Let $\nu= \nu_u(x_0)$ and $\rho_0 > 0$ such that for all $\rho <\rho_0$ (\ref{D2def}) holds. 
Let $Q_{d-1,\rho}(x_0)$ the (d-1)-dimensional cube with side-length $\rho>0$ centred in $x_0$ and let  $f : Q_{d-1}(x_0) \to \mathbb{R}$ be a $C^1$-function such that after rotation $R \in SO(d)$ such that $R e_d=\nu$, setting $x=(x^{\prime},x_d)$, there holds
\begin{align*}
M \cap Q^\nu_\rho(x_0) &= R\{(x^{\prime},x_d) \in Q_\rho(x_0) : x_d = f(x^{\prime}) \},\\
Q^\pm_\rho(x_0) &= R\{(x^{\prime},x_d) \in Q_\rho(x_0) :  x_d \gtrless f(x^{\prime}) \}.
\end{align*}
Note that we can assume $f(x_0)=0$ and
\begin{align*}
R^{-1}\nu_M(x_0) = \frac{(\nabla f(x_0),1)}{\sqrt{1+|\nabla f(x_0)|^2}}=e_d,
\end{align*}
hence $\nabla f(x_0)=0$.
\begin{figure}[htp]
\centering
\includegraphics{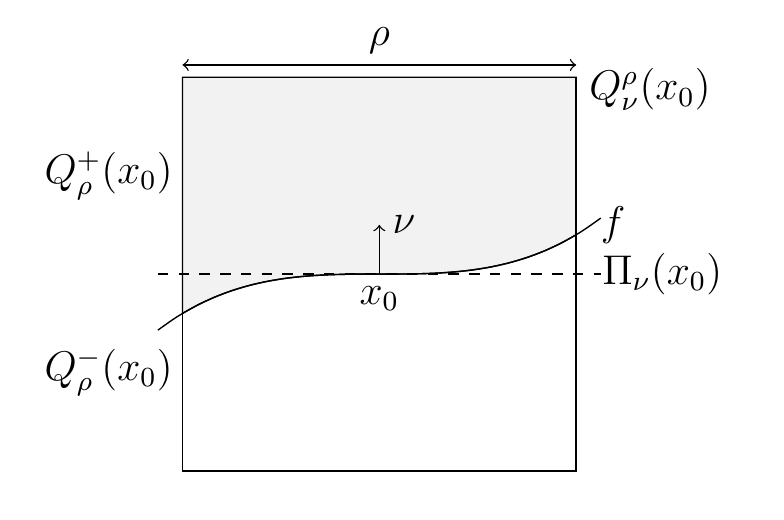}
\caption{Local representation of $S(u)$}
\end{figure}
Since $u \in \mathcal{D}_2(\Omega)$ we have that $u \in C^\infty(\overline{Q^\pm_\rho(x_0)})$ and therefore 
\begin{align}\label{uux0nucloseness}
\begin{split}
&|u(x)-u(y)| \leq C|x-y| \text{ for all } x,y \in Q^+_\rho(x_0), \\&|u(x)-u(y)| \leq C|x-y| \text{ for all } x,y \in Q^-_\rho(x_0).
\end{split}
\end{align}
Extend $u: Q_\rho^+ \to \mathbb{R}$ and $u: Q_\rho^- \to \mathbb{R}$  to Lipschitz functions $u_1 : \mathbb{R}^d \to \mathbb{R}$ and $u_2 : \mathbb{R}^d \to \mathbb{R}$ respectively. This can be done such that (\ref{uux0nucloseness}) holds for all $x,y \in \mathbb{R}^d$ for both $u_1$ and $u_2$. Define $w \in BV_{loc}(\mathbb{R}^d;\{-1,+1\})$ as an extension of
\begin{align*}
w(x) = 2 \chi_{Q_\rho^+(x_0)}(x) -1
\end{align*}
such that $S(w) \subset R\{f(x^{\prime})=x_d\}$ for all $x \in Q_{\rho_0}(x_0)$.
By the definition of $w$ and $Q^\pm_\rho(x_0)$ and Taylor expanding $f$ around $x_0$ we have that
\begin{align}\label{wtozero}
\begin{split}
\int_{Q^\nu_\rho(x_0)\setminus (Q^\nu_\rho(x_0))_\eta^+}|w-u_{x_0,\nu}| \mathrm{d}x &\leq 2( |(Q^\nu_\rho(x_0) \setminus (Q^\nu_\rho(x_0))_\eta^+)\cap  (Q^+_\rho(x_0) \cap \Pi_\nu^-(x_0))| \\&\quad+
|(Q^\nu_\rho(x_0) \setminus (Q^\nu_\rho(x_0))_\eta^+)\cap  (Q^-_\rho(x_0)\cap \Pi_\nu^+(x_0))|)\\& =2 \int_{Q_{d-1,\rho}(x_0)\setminus  (Q_{d-1,\rho}(x_0))^+_\eta} \int_{-|f(x^{\prime})|}^{|f(x^{\prime})|} \mathrm{d}x_d \mathrm{d}x^{\prime}
\\&= o\left(\int_{Q_{d-1,\rho}(x_0)\setminus  (Q_{d-1,\rho}(x_0))^+_\eta} |x^{\prime}-x_0^{\prime}| \mathrm{d}x^{\prime} \right) \\&\leq  o(\rho|Q_{d-1,\rho}(x_0)\setminus  (Q_{d-1,\rho}(x_0))^+_\eta|) = o(\eta\rho^{d-1}).
\end{split}
\end{align}
Let $u_\varepsilon^{\eta,\rho} \in \mathcal{PC}_\varepsilon(\mathbb{R}^d;\{-1,+1\})$ be such that $(u_\varepsilon^{\eta,\rho})_i = (u_{x_0,\nu})_i$ for all $i \in Z_\varepsilon(( Q^\nu_\rho(x_0))_\eta)$ and
\begin{align} \label{energybound uxnu}
F_\varepsilon(u_\varepsilon^{\eta,\rho},Q^\nu_\rho(x_0)) \leq m_\varepsilon^\eta(u_{x_0,\nu},Q^\nu_\rho(x_0)) + o(\rho^{d-1}) \leq C\rho^{d-1}
\end{align}
We construct $v_\varepsilon^{\eta,\rho} \in \mathcal{PC}_\varepsilon(\mathbb{R}^d)$ such that $(v_\varepsilon^{\eta,\rho})_i = u_i$ for all $i \in Z_\varepsilon( (Q^\nu_\rho(x_0))_{\frac{\eta}{2}})$ and
 \begin{align*}
 F_\varepsilon(v_\varepsilon^{\eta,\rho},Q^\nu_\rho(x_0)) \leq F_\varepsilon(u_\varepsilon^{\eta,\rho},Q^\nu_\rho(x_0)) +o(\rho^{d-1}) +C\eta \rho^{d-1}.
\end{align*}
Let $\displaystyle R=\sup_{\xi \in V} |\xi|$, $K_\varepsilon^\eta = \lfloor \frac{\eta}{6R\varepsilon} \rfloor \in \mathbb{N}$ and for $k \in \{ \frac{K_\varepsilon^\eta}{2} ,\ldots,K_\varepsilon^\eta\}$ we define $S_{k,\varepsilon} = (Q_\rho^\nu(x_0))^+_{3kR\varepsilon} \setminus (Q_\rho^\nu(x_0))^+_{3(k+1)R\varepsilon} $. For any $\varepsilon>0$ small enough and $\eta >0$ we have $w_\varepsilon \to w, (u_{x_0,\nu})_\varepsilon \to u_{x_0,\nu}$ in $L^1(Q^\nu_\rho(x_0))$ respectively and therefore
\begin{align}\label{integralcomparison}
\int_{Q^\nu_\rho(x_0)\setminus (Q^\nu_\rho(x_0))_\eta^+}|w-u_{x_0,\nu}| \mathrm{d}x \geq 2 \int_{Q^\nu_\rho(x_0)\setminus (Q^\nu_\rho(x_0))_\eta^+}|w_\varepsilon-(u_{x_0,\nu})_\varepsilon| \mathrm{d}x,
\end{align}
where $w_\varepsilon$ and $(u_{x_0,\nu})_\varepsilon$ are the discretizations of $w$ and $u_{x_0,\nu}$ respectively. Hence there exists $k(\varepsilon) \in \{ \frac{K_\varepsilon^\eta}{2} ,\ldots,K_\varepsilon^\eta\}$ such that
\begin{align}\label{cardestimatewuxnu}
\begin{split}
\int_{Q^\nu_\rho(x_0)\setminus (Q^\nu_\rho(x_0))_\eta^+}|w_\varepsilon-(u_{x_0,\nu})_\varepsilon| \mathrm{d}x &\geq  \sum_{k=\frac{K_\varepsilon^\eta}{2}}^{K_\varepsilon^\eta} \int_{S_{k,\varepsilon}}|w_\varepsilon-(u_{x_0,\nu})_\varepsilon| \mathrm{d}x \\&\geq  K_\varepsilon^\eta \sum_{i \in Z_\varepsilon( S_{k(\varepsilon),\varepsilon}) } \varepsilon^d |w_i - (u_{x_0,\nu})_i| \\&\geq  C \eta \varepsilon^{d-1} \#\{i \in Z_\varepsilon( S_{k(\varepsilon),\varepsilon}) : w_i \neq (u_{x_0,\nu})_i\}
\end{split}.
\end{align}
Define $ v^{\mathrm{aux}}_{\varepsilon,\eta,\rho} \in \mathcal{PC}_\varepsilon(\mathbb{R}^d;\{-1,+1\}) $ by 
\begin{align*}
v^{\mathrm{aux}}_{\varepsilon,\eta,\rho}(i) = \begin{cases} u^{\eta,\rho}_\varepsilon(i) &i \in Z_\varepsilon\left((Q_\rho^\nu(x_0))^+_{(3k(\varepsilon)+1)R\varepsilon}\right)\\
w(i) &\text{otherwise}.
\end{cases}
\end{align*}
We now have 
\begin{align}\label{Fepsauxestimate}
\begin{split}
F_\varepsilon(v^{\mathrm{aux}}_{\varepsilon,\eta,\rho},Q^\nu_\rho(x_0)) &\leq  F_\varepsilon(u^{\eta,\rho}_\varepsilon,Q^\nu_\rho(x_0)) + F_\varepsilon(w,(Q^\nu_\rho(x_0))_\eta)
\\& \quad + \sum_{\xi \in V} \underset{i+\varepsilon\xi \in S_{k(\varepsilon),\varepsilon}}{\sum_{i \in Z_\varepsilon\left( S_{k(\varepsilon),\varepsilon}\right)}} \varepsilon^d W^\varepsilon_{i,\xi}(D^\xi_\varepsilon v^{\mathrm{aux}}_{\varepsilon,\eta,\rho}(i)).
\end{split}
\end{align}
Note that it holds
\begin{align}\label{Wsubadd}
\begin{split}
W^\varepsilon_{i,\xi}(D^\xi_\varepsilon v^{\mathrm{aux}}_{\varepsilon,\eta,\rho}(i)) \leq   W^\varepsilon_{i,\xi}(D^\xi_\varepsilon w(i)) + \frac{c_{i,\xi}^\varepsilon}{\varepsilon}  ( \mathrm{1}_{w(i) \neq u^{\eta,\rho}_{\varepsilon}(i)} + \mathrm{1}_{w(i+\varepsilon\xi) \neq u^{\eta,\rho}_{\varepsilon}(i+\varepsilon\xi)})
\end{split}
\end{align}
and therefore, since $S_{k(\varepsilon),\varepsilon} \subset (Q^\nu_\rho(x_0))_\eta$, hence $u^{\eta,\rho}_\varepsilon(i) = (u_{x_0,\nu})(i)$ and using (\ref{wtozero}),(\ref{integralcomparison}) and (\ref{cardestimatewuxnu}), we have
\begin{align}\label{Stripeestimate}
 \sum_{\xi \in V} \underset{i+\varepsilon\xi \in Z_\varepsilon\left( S_{k(\varepsilon),\varepsilon}\right)}{\sum_{i \in Z_\varepsilon\left( S_{k(\varepsilon),\varepsilon}\right)}} \varepsilon^d W^\varepsilon_{i,\xi}(D^\xi_\varepsilon v^{\mathrm{aux}}_{\varepsilon,\eta,\rho}(i)) &\leq F_\varepsilon(w,(Q^\nu_\rho(x_0))_\eta)) \\&\nonumber+ C \varepsilon^{d-1} \#\{i \in Z_\varepsilon\left( S_{k(\varepsilon),\varepsilon}\right) : w_i \neq (u_{x_0,\nu})_i\}\\&\nonumber\leq F_\varepsilon(w,(Q^\nu_\rho(x_0))_\eta)) + o(\rho^{d-1})
\end{align}
Note that since $S(w)$ is the discretization of a $C^1$-manifold, using Lemma \ref{Neihbourhoodcardlemma}, we have that
\begin{align}\label{neighborhoodincube}
\begin{split}
F_\varepsilon(w,(Q^\nu_\rho(x_0))_\eta)) &\leq C\varepsilon^{d-1} \#\{\varepsilon\mathbb{Z}^s \cap (S(w)\cap (Q^\nu_\rho)_\eta)_{R\varepsilon} \} \leq \frac{C}{\varepsilon} | (S(w)\cap (Q^\nu_\rho)_\eta)_{R\varepsilon} | \\& \leq C\mathcal{H}^{d-1}(S(w) \cap (Q^\nu_\rho)_\eta)\leq C\eta\rho^{d-1}.
\end{split}
\end{align}
Using (\ref{Fepsauxestimate}), (\ref{Stripeestimate}) and (\ref{neighborhoodincube}) we obtain that
\begin{align}\label{Fauxcomparison}
F_\varepsilon(v^{\mathrm{aux}}_{\varepsilon,\eta,\rho},Q^\nu_\rho(x_0)) &\leq  F_\varepsilon(u^{\eta,\rho}_\varepsilon,Q^\nu_\rho(x_0)) +C \eta\rho^{d-1} + o(\rho^{d-1}).
\end{align}
We now define $v_\varepsilon^{\eta,\rho} \in \mathcal{PC}_\varepsilon(\mathbb{R}^d)$ by
\begin{align*}
v_\varepsilon^{\eta,\rho}(i) = \begin{cases} u_1(i) &v^{\mathrm{aux}}_{\varepsilon,\eta,\rho}(i)=+1 \\
u_2(i) &v^{\mathrm{aux}}_{\varepsilon,\eta,\rho}(i)=-1.
\end{cases}
\end{align*}
Note that $v^{\mathrm{aux}}_{\varepsilon,\eta,\rho}(i) = w(i)$ for $i \in (Q^\nu_\rho(x_0))_{\frac{\eta}{2}}$ and therefore $(v_\varepsilon^{\eta,\rho})_i = u_i$ for all $i \in Z_\varepsilon\left( (Q^\nu_\rho(x_0))_{\frac{\eta}{2}}\right)$. Assume moreover that $\varepsilon >0$ is small enough such that $ \displaystyle ||u_1-u_2||_\infty > \sup_{i\in \varepsilon\mathbb{Z}^d,\xi \in V} \sqrt{ c_{i,\xi}^\varepsilon \varepsilon}$. Note that for all $i \in Z_\varepsilon\left( Q^\nu_\rho(x_0)\right), \xi \in V$ there holds
\begin{align*}
W^\varepsilon_{i,\xi}( D^\xi_\varepsilon v_\varepsilon^{\eta,\rho}) \leq W^\varepsilon(D^\xi_\varepsilon v^{\mathrm{aux}}_{\varepsilon,\eta,\rho}(i)) + |D^\xi_\varepsilon u_1(i)|^2 +  |D^\xi_\varepsilon u_2(i)|^2.
\end{align*}
and therefore summing over $\xi \in V,i \in Z_\varepsilon\left( Q^\nu_\rho(x_0)\right)$ and using the Lipschitz continuity of $u_1,u_2$ to estimate
$
|D^\xi_\varepsilon u_1(i)|^2 +  |D^\xi_\varepsilon u_2(i)|^2 \leq C,
$
 we obtain
\begin{align}\label{Fvepsetarho}
F_\varepsilon(v_\varepsilon^{\eta,\rho},Q^\nu_\rho(x_0)) \leq F_\varepsilon(v^{\mathrm{aux}}_{\varepsilon,\eta,\rho},Q^\nu_\rho(x_0)) + C(u)\rho^d.
\end{align}
Dividing by $\rho^{d-1}$, using (\ref{energybound uxnu}), (\ref{Fepsauxestimate}) and (\ref{Fauxcomparison}), we obtain
\begin{align*}
\frac{1}{\rho^{d-1}} F_\varepsilon(v_\varepsilon^{\eta,\rho},Q^\nu_\rho(x_0)) &\leq \frac{1}{\rho^{d-1}} F_\varepsilon(u^{\eta,\rho}_\varepsilon,Q^\nu_\rho(x_0)) +C \eta + C(u)\rho + o(1) \\& \leq \frac{1}{\rho^{d-1}} m_\varepsilon^\eta(u_{x_0,\nu},Q_\rho^\nu(x_0)) +C \eta + C(u)\rho + o(1).
\end{align*}
Noting that 
$
F_\varepsilon(v_\varepsilon^{\eta,\rho},Q^\nu_\rho(x_0)) \geq m_\varepsilon^{\frac{\eta}{2}}(u,Q^\nu_\rho(x_0)),
$ the claim follows by letting first $\varepsilon \to 0$, $\eta \to 0$ and finally $\rho \to 0$.
\end{proof}

\section{Characterization of the Bulk Energy density}

This section is devoted to the characterization of the Bulk energy density. We want to prove that the elastic energy density can be recovered using only discrete functions, that do not have to high jumps.

\medskip

We start by introducing some notation and recalling a Theorem that is well known in the continuum setting.

\medskip

For $\varepsilon >0$ and $ v :\varepsilon\mathbb{Z}^d \to \mathbb{R} $ we define
\begin{align*}
\mathcal{M}_\varepsilon(u)(x) = \sup \left\{\frac{1}{\# Z_\varepsilon(Q_\eta(x)) }\sum_{z \in Z_\varepsilon(Q_\eta(x))} u(x) : \eta>0 \right\}.
\end{align*}
Furthermore for $v :\varepsilon\mathbb{Z}^d \to \mathbb{R}$ we define
\begin{align*}
|\nabla_\varepsilon v|(x) = \underset{|x-z| =\varepsilon}{\sum_{z \in \varepsilon\mathbb{Z}^d}}\frac{|u(x)-u(z)|}{|x-z|}.
\end{align*}

\begin{theorem}\label{ExtensionTheorem}
Let $\varepsilon >0$, $R>0$ and let $u : \varepsilon\mathbb{Z}^d \cap Q^\nu_{\rho_0} \to \mathbb{R} $ and let
\begin{align*}
E^\lambda_\varepsilon=\{x \in Q^\nu_{\rho_0} : \mathcal{M}_\varepsilon(|\nabla_\varepsilon u|)(x) \leq \lambda\}
\end{align*}
Then for any $\rho \in (0,\rho_0)$ we can find a Lipschitz function $v : \varepsilon\mathbb{Z}^d \cap Q^\nu_{\rho_0} \to \mathbb{R}$ such that $u(x)=v(x)$ for all $x \in E^\lambda_\varepsilon \cap Q^\nu_\rho$ and
\begin{align*}
\mathrm{lip}(v,Q) \leq c\lambda +C \frac{2||u||_\infty}{\rho_0-\rho}.
\end{align*}
\end{theorem}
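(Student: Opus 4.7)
The plan is to carry out a discrete version of the classical Lusin-type Lipschitz approximation, combining a Poincar\'e-type estimate on the superlevel set with a McShane--Whitney extension. The key observation is that the hypothesis $\mathcal{M}_\varepsilon(|\nabla_\varepsilon u|)(x) \leq \lambda$ on $E^\lambda_\varepsilon$ should force $u$ restricted to $E^\lambda_\varepsilon \cap Q^\nu_\rho$ to be Lipschitz with constant of the claimed size, after which extending with the same Lipschitz constant is essentially automatic.

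First I would fix $x,y \in E^\lambda_\varepsilon \cap Q^\nu_\rho$ and split the analysis according to $|x-y|$. In the regime $|x-y| \leq \tfrac{1}{2}(\rho_0-\rho)$, one can find a cube $Q_r \subset Q^\nu_{\rho_0}$ with side length $r \simeq |x-y|$ containing both $x$ and $y$ together with a neighbourhood of comparable scale. The argument then runs along standard lines: by a dyadic chaining argument one writes $u(x) - u_{Q_r}$ as a telescoping sum of discrete averages over cubes $Q_{2^{-k}r}(x)$, and a discrete Poincar\'e inequality, obtained by summing $|\nabla_\varepsilon u|$ along lattice paths inside each cube, bounds each increment by $C\,2^{-k} r$ times the average of $|\nabla_\varepsilon u|$ over $Q_{2^{-k}r}(x)$. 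Each such average is dominated by $\mathcal{M}_\varepsilon(|\nabla_\varepsilon u|)(x) \leq \lambda$, and summing the geometric series gives $|u(x) - u_{Q_r}| \leq C\lambda r$. The symmetric estimate at $y$ yields $|u(x) - u(y)| \leq c\lambda |x-y|$. When instead $|x-y| > \tfrac{1}{2}(\rho_0-\rho)$, one uses the trivial bound $|u(x) - u(y)| \leq 2\|u\|_\infty \leq \tfrac{4\|u\|_\infty}{\rho_0-\rho}|x-y|$. Combined, these two estimates show that $u|_{E^\lambda_\varepsilon \cap Q^\nu_\rho}$ is Lipschitz with constant $L := c\lambda + C\tfrac{2\|u\|_\infty}{\rho_0-\rho}$.

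Having established this, the second step is to produce the extension by the (discrete) McShane--Whitney formula
$$v(x) = \inf_{y \in E^\lambda_\varepsilon \cap Q^\nu_\rho} \bigl\{ u(y) + L\,|x-y| \bigr\},$$
defined on $\varepsilon\mathbb{Z}^d \cap Q^\nu_{\rho_0}$. By the infimum structure $v$ is automatically Lipschitz with constant at most $L$ on $Q^\nu_{\rho_0}$ (and in particular on $Q = Q^\nu_\rho$), while the Lipschitz estimate on the good set shows that the infimum is achieved at $y=x$ whenever $x \in E^\lambda_\varepsilon \cap Q^\nu_\rho$, so that $v(x) = u(x)$ there.

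I expect the main technical obstacle to be the discrete Poincar\'e inequality at the bottom of the dyadic chain: the chain must terminate at scale $\simeq \varepsilon$, where the average of $u$ over $Q_\varepsilon(x)$ agrees with $u(x)$ up to irrelevant boundary effects, and one must check that the definition of $|\nabla_\varepsilon u|$ (a sum of nearest-neighbour increments) is strong enough to control differences of averages at every intermediate scale via lattice path integrals. One also has to verify that all cubes appearing in the chain remain inside $Q^\nu_{\rho_0}$, which is precisely what motivates the cutoff $|x-y| \leq \tfrac{1}{2}(\rho_0-\rho)$ and gives rise to the $\tfrac{\|u\|_\infty}{\rho_0-\rho}$ contribution in the final Lipschitz constant.
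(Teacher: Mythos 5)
Your proposal is correct, but it takes a genuinely different route from the paper's. The paper does not run a dyadic chain at all; instead it works at the \emph{single} scale $r=\|x-y\|_\infty$. It introduces $U(x,z)=\tfrac{|u(x)-u(z)|}{|x-z|}$ and relies on the single inequality (borrowed from \cite{alicandro2004general}, Lemma 3.6)
\[
\frac{1}{\# Z_\varepsilon(Q_\eta(x))}\sum_{z\in Z_\varepsilon(Q_\eta(x))}U(x,z)\;\leq\; C\,\mathcal{M}_\varepsilon(|\nabla_\varepsilon u|)(x),
\]
which says that the average difference quotient from $x$ is already controlled by the maximal function at $x$. Given that, the Lipschitz bound on the good set $E^\lambda_\varepsilon\cap Q^\nu_\rho$ follows by a pigeonhole argument rather than a telescope: by Chebyshev, the set $W_x$ of $z\in Q_r(x)$ where $U(x,z)$ exceeds $\sim\lambda$ has cardinality below half of $\# Z_\varepsilon(Q_r(x)\cap Q_r(y))$, symmetrically for $W_y$, so there exists a common intermediate point $z\in (Q_r(x)\cap Q_r(y))\setminus(W_x\cup W_y)$ with both $U(x,z)$ and $U(y,z)$ small, and the triangle inequality closes the estimate. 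Your chaining-plus-Poincar\'e argument essentially re-derives, in expanded form, the content of the cited inequality, and then combines the two directions $x$ and $y$ through the pivot $u_{Q_r}$ instead of through a good lattice point $z$; the chain termination at scale $\eta\simeq\varepsilon$, where $u_{Q_\varepsilon(x)}=u(x)$, and the inclusion of all dyadic cubes in $Q^\nu_{\rho_0}$ are exactly the technicalities you flagged, and they do go through under your cutoff $|x-y|\leq\tfrac12(\rho_0-\rho)$. What your route buys is self-containedness (no reference to Lemma 3.6 of \cite{alicandro2004general}) and a McShane--Whitney extension instead of Kirszbraun, which is slightly cleaner for scalar-valued $u$; what the paper's route buys is brevity, since the whole multi-scale argument is absorbed into a single cited inequality and a one-scale counting estimate. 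Both yield the same Lipschitz constant $c\lambda + C\,\tfrac{2\|u\|_\infty}{\rho_0-\rho}$ up to dimensional constants.
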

\begin{proof}
Let $U : Z_\varepsilon(Q) \times Z_\varepsilon(Q) \to \mathbb{R}$ be defined by
\begin{align*}
U(x,z) = \frac{|u(x)-u(z)|}{|x-z|}.
\end{align*}
We need the inequality
\begin{align}\label{Maximumfunctionineq}
\frac{1}{\# Z_\varepsilon(Q_\eta(x)) }\sum_{z \in Z_\varepsilon(Q_\eta(x))} U(x,z) \leq C\mathcal{M}_\varepsilon(|\nabla_\varepsilon u|)(x)
\end{align}
which holds for any $0<\eta$ with some constant $C>0$ depending only on the dimension. This follows by repeating \cite{alicandro2004general}, Lemma 3.6 while noting that the geometry of $Q_\rho$ in order to perform the construction in Lemma 3.6 for all points in $Q_\rho$. We claim that for any $x,y \in Z_\varepsilon( E^\lambda_\varepsilon \cap Q_\rho)$ we have 
\begin{align}\label{Lipschitz Ineq}
|u(x)-u(y)| \leq \left( \frac{4\lambda}{\tilde{c}(d)} + \frac{2||u||_\infty}{\rho_0-\rho}\right)|x-y|.
\end{align}
Set $||x-y||_\infty=r$ and define 
\begin{align}\label{tildec}
\tilde{c}(d) = \#\left(Z_\varepsilon( Q_r(x) \cap Q_r(y))\right) \geq 1.
\end{align}
 assume $r < \rho_0-\rho$. We define
\begin{align*}
W_{x} : = \left\{z \in Z_\varepsilon( Q_r(x)) : U(x,z) > \frac{2\lambda}{C \tilde{c}(d)} \right\},W_{y} : = \left\{z \in Z_\varepsilon( Q_r(y)) : U(y,z) > \frac{2\lambda}{C \tilde{c}(d)} \right\}
\end{align*}
with $\tilde{c}(d)$ defined by (\ref{tildec}) and $C$ given by (\ref{Maximumfunctionineq}). By (\ref{Maximumfunctionineq}) we get
\begin{align*}
\# W_x < \frac{\tilde{c}(d)\mathcal{M}_\varepsilon(|\nabla_\varepsilon u|)(x)}{2\lambda}\leq \frac{\tilde{c}(d)}{2}.
\end{align*}
Similarly we get $\# W_y <\frac{\tilde{c}(d)}{2} $. Hence we can find $z \in Z_\varepsilon((Q_r(y) \cap Q_r(x) )\setminus (W_x\cup W_y))$. Since $|x-z| < r$, $|z-y|<r$ we get
\begin{align*}
U(x,y) < U(z,y) + U(z,x) < \frac{4\lambda}{C\tilde{c}(d)}
\end{align*}
and (\ref{Lipschitz Ineq}) follows. On the other hand if $r \geq \rho_0-\rho$, then
\begin{align*}
|u(x)-u(y)| \leq 2||u||_\infty \leq C\frac{2||u||_\infty}{\rho_0-\rho}|x-y|
\end{align*}
and we obtain (\ref{Lipschitz Ineq}). Now by the Kirszbraun Extension Theorem there exists a Lipschitz function with the required properties.
\end{proof}
Additionally to (H1)-(H3) assume there holds
\begin{itemize}
\item[(H4)] There exist $0<c<C$ such that $ c_{i,\xi}^\varepsilon \in [c,C] \cup\{0\}$ for all $i \in \varepsilon\mathbb{Z}^d, \xi \in V, \varepsilon>0$.
\end{itemize}
\begin{lemma} The function $f : \Omega \times \mathbb{R}^d \to [0,+\infty)$ and $h: \Omega \times \mathbb{R}^d \to [0,+\infty)$ given by (\ref{Definition f}) and (\ref{Definition h}) respectively are Carath\'eodory functions.
\end{lemma}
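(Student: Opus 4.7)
The plan is to verify separately the two conditions that define a Carath\'eodory function, namely (i) measurability of $x \mapsto f(x,\zeta)$ (resp.\ $x \mapsto h(x,\zeta)$) for every fixed $\zeta \in \mathbb{R}^d$, and (ii) continuity of $\zeta \mapsto f(x,\zeta)$ (resp.\ $\zeta \mapsto h(x,\zeta)$) for every fixed $x$. For $h$ both conditions are already contained in \cite{alicandro2004general}, Theorem~3.1 and Corollary~3.11, since the purely quadratic energies $H_\varepsilon$ fall directly within the framework treated there. I therefore concentrate on $f$.

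For the continuity in $\zeta$, I would exploit the structural information encoded in Theorem~\ref{MainTheorem}: the density $f(x,\cdot)$ is quasiconvex and satisfies the quadratic bounds (\ref{fbounds}). In the scalar case quasiconvexity coincides with convexity, and a classical result (see, e.g., Dacorogna, \emph{Direct methods in the calculus of variations}) shows that every convex function with polynomial growth is locally Lipschitz, with Lipschitz constant on $B_R(0)$ depending only on the upper bound and on $R$. This immediately yields continuity, in fact local Lipschitz continuity, of $\zeta \mapsto f(x,\zeta)$ for every $x \in \Omega$.

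For measurability in $x$, my preferred route is via the Besicovitch differentiation theorem. Fix $\zeta \in \mathbb{R}^d$ and set $u_\zeta(x) = \zeta \cdot x$. The localized $\Gamma$-limit $A \mapsto F(u_\zeta,A)$ from Theorem~\ref{MainTheorem} defines an increasing, inner regular, $\sigma$-subadditive set function on open subsets of $\Omega$, and by the De Giorgi--Letta criterion it extends to a Borel measure $\mu_\zeta$ on $\Omega$. Since $u_\zeta$ is affine, the integral representation of Theorem~\ref{MainTheorem} gives $\mu_\zeta(A) = \int_A f(x,\zeta)\, dx$, so $\mu_\zeta \ll \mathcal{L}^d$ and the Radon--Nikodym derivative $d\mu_\zeta/d\mathcal{L}^d$ is Lebesgue measurable and coincides $\mathcal{L}^d$-almost everywhere with $f(\cdot,\zeta)$. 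A more hands-on alternative would work directly with formula (\ref{Definition f}): for fixed $k,\eta,\rho$ the map $x_0 \mapsto m_{\varepsilon_k,\eta}^{F_{\varepsilon_k}}(u_\zeta, Q_\rho^\nu(x_0))$ is Borel measurable because it is piecewise constant in $x_0$ (it only changes value when a lattice point crosses the boundary of $Q_\rho^\nu(x_0)$ or of its $\eta$-inner strip), and one then passes to countable null sequences $\eta_n, \rho_m \downarrow 0$ inside the limsup and the two outer limits.

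The step I expect to require the most care is the control of the outermost iterated limits in (\ref{Definition f}): to execute either route cleanly one must know that, at every $x_0$ where the formula defines $f(x_0,\zeta)$, the iterated limit is independent of the countable null sequence chosen for $\eta$ and for $\rho$. The Besicovitch route is attractive precisely because it encodes this subsequence independence in the existence of the Radon--Nikodym derivative, thereby bypassing a direct re-examination of the blow-up construction of Lemma~\ref{LiminfBulkLemma} and the cut-off argument of Lemma~\ref{EqualityBulkLemma}. For this reason I would favour the measure-theoretic formulation.
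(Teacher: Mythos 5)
The paper's own proof of this lemma is extremely terse: it simply cites \cite{alicandro2004general}, Theorem 3.1 for $h$, and states that for $f$ the claim ``follows following exactly the same steps as in \cite{ruf2017discrete} Lemma 3.8''. You have therefore supplied a self-contained argument where the paper only provides pointers to the literature.

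Your handling of $h$ coincides with the paper's. For $f$ your two-track approach is sound. Continuity in $\zeta$ via convexity (quasiconvexity reduces to convexity in the scalar case) combined with the growth bounds (\ref{fbounds}) is the standard argument, and those structural facts about $f$ are indeed already asserted and used in the proof of Proposition~\ref{GammalimsupLemma}, so you are entitled to rely on them. Measurability in $x$ via the De Giorgi--Letta criterion and Besicovitch differentiation is clean; the one caveat, which you partially acknowledge, is that this route identifies $f(\cdot,\zeta)$ only $\mathcal{L}^d$-a.e.\ with the measurable Radon--Nikodym derivative $d\mu_\zeta/d\mathcal{L}^d$, and one then needs the (elementary but worth recording) observation that agreement a.e.\ with a measurable function implies Lebesgue measurability, by completeness of the Lebesgue $\sigma$-algebra. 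Since the lemma is invoked in Proposition~\ref{Prop Char BulkDensity} only to pass from equality of $f$ and $h$ on the countable set $\mathbb{Q}^d$ of slopes to equality for all slopes, working with a Carath\'eodory representative is all that is needed downstream, so nothing is lost. Both the Besicovitch route and your ``hands-on'' alternative---piecewise-constant dependence of the finite-$\varepsilon$ cell minima on $x_0$, followed by countable null sequences in the iterated limits---are legitimate; the paper delegates the details to \cite{ruf2017discrete}, so it is not decisive which one it has in mind, but the measure-theoretic shortcut you favour is a perfectly acceptable and arguably more transparent substitute.
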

\begin{proof}
The fact that $h$ is Carath\'eodory function follows by \cite{alicandro2004general} Theorem 3.1, while for $f$ it follows following exactly the same steps as in \cite{ruf2017discrete} Lemma 3.8.
\end{proof}
The steps of the proof are essentially the same as the ones in \cite{ruf2017discrete}, Proposition 4. We state the proof here for completeness.
\begin{proposition}[Characterization of the Bulk-Energy density] \label{Prop Char BulkDensity} Assume (H1)-(H4) holds. Then for a.e. $x \in \Omega$ and all $\zeta \in \mathbb{R}^d$ there holds
\begin{align*}
h(x,\zeta) = f(x,\zeta).
\end{align*}
\end{proposition}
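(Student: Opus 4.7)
The inequality $f(x_0,\zeta) \leq h(x_0,\zeta)$ is immediate: from $W^\varepsilon_{i,\xi}(z) = (c^\varepsilon_{i,\xi}/\varepsilon)\wedge z^2 \leq \mathrm{1}_{c^\varepsilon_{i,\xi}>0}\,z^2$ one has $F_\varepsilon \leq H_\varepsilon$ on $\mathcal{PC}_\varepsilon$, and since $\eta \mapsto m_{\varepsilon,\eta}$ is non-decreasing (a thinner boundary layer means fewer admissibility constraints) and both cell problems impose the same affine datum $\zeta\cdot x$, we obtain $m^{F_\varepsilon}_{\varepsilon,\eta}(\zeta\cdot, Q^\nu_\rho(x_0)) \leq m^{F_\varepsilon}_{\varepsilon,\varepsilon}(\zeta\cdot, Q^\nu_\rho(x_0)) \leq m^{H_\varepsilon}_{\varepsilon,\varepsilon}(\zeta\cdot, Q^\nu_\rho(x_0))$ for every $\eta \leq \varepsilon$. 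Passing to $\limsup_{k\to\infty}$ in $\varepsilon_k$, then $\eta \to 0$ and $\rho \to 0$, gives $f \leq h$ almost everywhere.

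For the converse $h(x_0,\zeta) \leq f(x_0,\zeta)$, fix a common Lebesgue point $x_0$, parameters $\delta,\rho,\eta>0$ small, and pick $u_\varepsilon \in \mathcal{PC}_\varepsilon$ almost optimal for $m^{F_\varepsilon}_{\varepsilon,\eta}(\zeta\cdot, Q^\nu_\rho(x_0))$, so that $F_\varepsilon(u_\varepsilon)\leq (f(x_0,\zeta)+\delta)\rho^d$; by truncation, which lowers $F_\varepsilon$, we may further assume $||u_\varepsilon - \zeta\cdot x_0||_\infty \leq C|\zeta|\rho$. The strategy is to apply the discrete Lusin-type approximation Theorem \ref{ExtensionTheorem} to obtain a Lipschitz function $v_\varepsilon$ which coincides with $u_\varepsilon$ wherever the discrete maximal function of $|\nabla_\varepsilon u_\varepsilon|$ is bounded by some $\lambda_\varepsilon$, and to exploit (H4) together with the choice $\lambda_\varepsilon^2 < c/(\sup_\xi |\xi|^2\,\varepsilon)$ to guarantee that no spring is broken by $v_\varepsilon$. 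With this choice the weak-membrane and elastic integrands agree pairwise for $v_\varepsilon$, i.e.\ $W^\varepsilon_{i,\xi}(D^\xi_\varepsilon v_\varepsilon) = \mathrm{1}_{c^\varepsilon_{i,\xi}>0}|D^\xi_\varepsilon v_\varepsilon|^2$, so that $F_\varepsilon(v_\varepsilon) = H_\varepsilon(v_\varepsilon)$.

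Applying Theorem \ref{ExtensionTheorem} on the slightly enlarged cube $Q^\nu_{\rho(1+\tau)}(x_0)$ with $\lambda_\varepsilon \to \infty$ (e.g.\ $\lambda_\varepsilon = c'/\sqrt{\varepsilon}$ with $c'$ sufficiently small), we obtain $v_\varepsilon$ with $\mathrm{lip}(v_\varepsilon) \leq C(\lambda_\varepsilon + |\zeta|/\tau)$ and $v_\varepsilon = u_\varepsilon$ on the good set $G_\varepsilon := E^{\lambda_\varepsilon}_\varepsilon \cap Z_\varepsilon(Q^\nu_\rho(x_0))$. Decompose $H_\varepsilon(v_\varepsilon, Q^\nu_\rho(x_0))$ according to whether the pair $(i, i+\varepsilon\xi)$ has both endpoints in $G_\varepsilon$ (``good'' pair: there $v_\varepsilon = u_\varepsilon$ and also $|D^\xi_\varepsilon u_\varepsilon| \leq \lambda_\varepsilon$ by definition of $E^{\lambda_\varepsilon}_\varepsilon$, so $u_\varepsilon$ too has no broken spring and the contribution coincides with the $F_\varepsilon$ contribution of $u_\varepsilon$) or not (``bad'' pair: the Lipschitz bound then gives contribution $\leq C\lambda_\varepsilon^2 |B^{\lambda_\varepsilon}_\varepsilon|$ where $B^{\lambda_\varepsilon}_\varepsilon = Q^\nu_\rho \setminus E^{\lambda_\varepsilon}_\varepsilon$). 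A short cut-off construction on an $O(\varepsilon)$-thin strip near $\partial Q^\nu_\rho(x_0)$ finally adjusts $v_\varepsilon$ to match the boundary condition $\zeta\cdot x$ required by $m^{H_\varepsilon}_{\varepsilon,\varepsilon}$, at the cost of an $O(\tau)\rho^d$ error.

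The core difficulty, and main obstacle, is to show $\lambda_\varepsilon^2|B^{\lambda_\varepsilon}_\varepsilon|/\rho^d \to 0$. The naive weak-$(1,1)$ estimate for the discrete maximal function, combined with a Cauchy--Schwarz and $L^\infty$ split of $||\nabla_\varepsilon u_\varepsilon||_1$ (using that $F_\varepsilon(u_\varepsilon) \leq C\rho^d$ controls the unbroken part, while (H4) caps the number of broken pairs by $C\rho^d/\varepsilon^{d-1}$ and $||u_\varepsilon||_\infty \leq C\rho$ bounds their individual gradient), yields only $\lambda_\varepsilon^2|B^{\lambda_\varepsilon}_\varepsilon| \leq C\lambda_\varepsilon\rho^d$, which diverges. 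The fix, following \cite{ruf2017discrete}, is to use a discrete Calder\'on--Zygmund good-$\lambda$ inequality $\lambda^2|B^\lambda_\varepsilon| \leq C\int_{\{|\nabla_\varepsilon u_\varepsilon|>c\lambda\}}|\nabla_\varepsilon u_\varepsilon|^2$ and to split the right-hand side according to whether the pair is broken or not: the unbroken contribution is majorized by the tail of $F_\varepsilon(u_\varepsilon)$ restricted to $\{|D|>c\lambda_\varepsilon\}$ and vanishes as $\lambda_\varepsilon \to \infty$ by absolute continuity of the discrete energy measure whose total mass is at most $C\rho^d$, while the broken contribution is controlled using the sharp bound $|D|^2 \leq C\rho^2/\varepsilon^2$ together with the $(d-1)$-measure estimate $\varepsilon^{d-1}N_{\mathrm{broken}} \leq C\rho^d$, which becomes negligible compared with $\rho^{d-1}$ in the iterated limit $\rho\to 0$ taken after $\varepsilon \to 0$. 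Putting everything together, $m^{H_\varepsilon}_{\varepsilon,\varepsilon}(\zeta\cdot, Q^\nu_\rho(x_0)) \leq F_\varepsilon(u_\varepsilon) + o(\rho^d)$ as $\varepsilon \to 0$, and dividing by $\rho^d$ and sending $\varepsilon \to 0$, $\tau \to 0$, $\eta \to 0$, $\rho \to 0$ and $\delta \to 0$ in that order yields $h(x_0,\zeta) \leq f(x_0,\zeta)$.
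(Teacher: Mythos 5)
Both directions of your argument have problems, and the second one is a genuine gap. For $f\le h$, the monotonicity comparison $m^{F_\varepsilon}_{\varepsilon,\eta}\le m^{F_\varepsilon}_{\varepsilon,\varepsilon}\le m^{H_\varepsilon}_{\varepsilon,\varepsilon}$ requires $\eta\le\varepsilon$, but the cell formula (\ref{Definition f}) takes $\limsup_{k\to\infty}$ \emph{before} $\lim_{\eta\to 0}$: for any fixed $\eta>0$ eventually $\varepsilon_k<\eta$, so the inequality reverses and your chain gives nothing in the iterated limit. The paper instead argues at the level of $\Gamma$-limits: take a recovery sequence $u_\varepsilon^\rho$ for $H(\zeta\cdot,Q^\nu_\rho(x_0))$, use $F_\varepsilon\le H_\varepsilon$ and the $\Gamma$-liminf inequality of Proposition \ref{GammaliminfMS} to get $F(\zeta\cdot,Q^\nu_\rho(x_0))\le H(\zeta\cdot,Q^\nu_\rho(x_0))$, then divide by $\rho^d$ and use Lebesgue points.

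For $h\le f$, the choice $\lambda_\varepsilon^2\sim\varepsilon^{-1}$ combined with a single Lipschitz truncation and a good-$\lambda$ estimate cannot give $\lambda_\varepsilon^2|B^{\lambda_\varepsilon}_\varepsilon|=o(\rho^d)$. Indeed, every broken nearest-neighbour spring of $u_\varepsilon$ produces a point $i$ with $|\nabla_\varepsilon u_\varepsilon|(i)>\sqrt{c/\varepsilon}>\lambda_\varepsilon$, hence a point of $B^{\lambda_\varepsilon}_\varepsilon$; the energy bound only gives $N_{\mathrm{broken}}\le C\rho^d/\varepsilon^{d-1}$, so in the worst case $|B^{\lambda_\varepsilon}_\varepsilon|\gtrsim\varepsilon\rho^d$ and $\lambda_\varepsilon^2|B^{\lambda_\varepsilon}_\varepsilon|\gtrsim\rho^d$, which is not $o(\rho^d)$. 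Your proposed weak-$(2,2)$/good-$\lambda$ bound does not repair this: all broken springs satisfy $|D|>c\lambda_\varepsilon$ and therefore sit inside the tail $\{|\nabla_\varepsilon u_\varepsilon|>c\lambda_\varepsilon\}$, so their contribution
\begin{align*}
\sum_{\mathrm{broken}}\varepsilon^d|D|^2 \;\lesssim\; \varepsilon^d\cdot\frac{\rho^2}{\varepsilon^2}\cdot\frac{\rho^d}{\varepsilon^{d-1}} \;=\;\frac{\rho^{d+2}}{\varepsilon}
\end{align*}
diverges as $\varepsilon\to 0$ at fixed $\rho$; it is not ``negligible compared with $\rho^{d-1}$'' (nor $\rho^d$). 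What the paper (following \cite{ruf2017discrete}, Proposition 4) actually does is a \emph{two-stage} construction: Step 1 applies Theorem \ref{ExtensionTheorem} with the weak-$(1,1)$ maximal estimate and the choice $\lambda_k=\rho_k^{-1}$, which only yields $|B^{\lambda_k}|\le C\rho_k^{d+2}$ and hence a uniform $L^2$ bound $\|\,|\nabla_{\varepsilon_k}v_{\varepsilon_k}|\,\|_{L^2}^2\lesssim\rho_k^d$ (not $o(\rho_k^d)$); Step 2 rescales to $Q_1^\nu$ and invokes the decomposition lemma of \cite{fonseca2007modern} (Lemma 2.31) to extract a slowly increasing $l_k\to\infty$ and a \emph{second} Lipschitz truncation $u_k$ with $|\nabla_{\eta_k}u_k|^2$ \emph{equi-integrable}; Step 3 uses this equi-integrability to show that outside a set of arbitrarily small measure (uniformly in $k$) the discrete gradients stay below a fixed threshold $t_\delta$, so that $W^{\varepsilon}_{i,\xi}$ coincides with the quadratic potential there, while the remaining contribution is $\le\delta\rho_k^d$. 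The equi-integrability extraction is precisely what decouples $\rho$ from $\varepsilon$ and is the missing idea in your proposal.
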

\begin{proof} Since $h$ and $f$ are both Carath\'eodory functions it suffices to prove the equality only for a dense set $\mathcal{D} \subset \mathbb{R}^d$. Take $\mathcal{D}= \mathbb{Q}^d$ and the set of points in $\Omega$ to be
\begin{align*}
\Omega\setminus N=\Omega\setminus \left(\bigcup_{\zeta \in \mathbb{Q}^d} N_
\zeta(\tilde{f}) \cup \bigcup_{\zeta \in \mathbb{Q}^d} N_\zeta(f)  \right),
\end{align*} 
where 
\begin{align*}
N_\zeta(h) = \left\{x \in \Omega : h(x,\zeta) \text{ is not a Lebesgue point for } h(\cdot,\zeta)\right\}.
\end{align*}
We then have that $|N|=0$ as $N$ being the countable union of nullsets. We first prove 
\begin{align*}
h(x,\zeta) \geq f(x,\zeta) \quad x \in \Omega, \zeta \in \mathbb{Q}^d.
\end{align*}
To this end let $u^\rho_\varepsilon \to \zeta\cdot$ in $L^1(\Omega)$ be such that 
\begin{align*}
\limsup_{\varepsilon \to 0} H_\varepsilon(u^\rho_\varepsilon,Q^\nu_\rho(x_0)) = H(\zeta\cdot,Q^\nu_\rho(x_0)).
\end{align*}
Since $F_\varepsilon(u,A) \leq H_\varepsilon(u,A)$ and $u_\varepsilon \to \zeta\cdot$ in $L^1(\Omega)$ by Proposition \ref{GammaliminfMS} we have that
\begin{align*}
F(\zeta\cdot,Q^\nu_\rho(x_0)) \leq \liminf_{\varepsilon \to 0} F_\varepsilon(u_\varepsilon,Q^\nu_\rho(x_0))\leq \limsup_{\varepsilon \to 0} H_\varepsilon(u_\varepsilon,Q^\nu_\rho(x_0)) = H(\zeta\cdot,Q^\nu_\rho(x_0)).
\end{align*}
Dividing by $\rho^d$ and letting $\rho \to 0$, while noting that
\begin{align*}
F(\zeta\cdot,Q^\nu_\rho(x_0))  = \int_{Q^\nu_\rho(x_0)} f(x,\zeta)\mathrm{d}x \leq \int_{Q^\nu_\rho(x_0)} h(x,\zeta)\mathrm{d}x
\end{align*}
we obtain the claim. Next we prove the opposite inequality. Set $ u_{x_0,\zeta}=\zeta \cdot (x-x_0)$ and let $\{\varepsilon_n\}_n \subset \{\varepsilon\}_\varepsilon$ and $u_\varepsilon \to  u_{x_0,\zeta}$ strongly in $L^1(\Omega)$ be such that
\begin{align*}
\limsup_{\varepsilon \to 0} F_\varepsilon(u_\varepsilon,Q^\nu_{\rho_0}(x_0)) = \lim_{\varepsilon \to 0} F_{\varepsilon_n}(u_{\varepsilon_n},Q^\nu_{\rho_0}(x_0))= F(u_{x_0,\zeta},Q^\nu_{\rho_0}(x_0)).
\end{align*}
Since truncation lowers the energy we can assume that $ ||u_\varepsilon||_\infty \leq C\rho $ and therefore we also have that $u_\varepsilon \to u_{x_0,\zeta}$ strongly in $L^2(Q^\nu_{\rho_0}(x_0))$. Now for $0<\rho <\rho_0$ using the same cut-off construction as in Lemma \ref{LiminfBulkLemma} we obtain
\begin{align*}
\limsup_{\varepsilon \to 0} F_\varepsilon(u_\varepsilon,Q^\nu_\rho(x_0)) &\leq \limsup_{\varepsilon\to 0} F_\varepsilon(u_\varepsilon,Q^\nu_{\rho_0}(x_0)) - \liminf_{\varepsilon \to 0} F_\varepsilon(u_\varepsilon, Q^\nu_{\rho_0} \setminus \overline{Q}^\nu_\rho(x_0))  \\&\leq F(u_{x_0,\zeta},Q^\nu_{\rho_0}(x_0)) - F(u_{x_0,\zeta},Q^\nu_{\rho_0}(x_0)\setminus \overline{Q}^\nu_{\rho}(x_0)) \\&= F(u_{x_0,\zeta},Q^\nu_\rho(x_0)).
\end{align*}
and therefore we obtain that $u_\varepsilon$ is a recovery sequence for $u_{x_0,\zeta}$ for all $0<\rho <\rho_0$. Choose $\rho_k \to 0$ such that
\begin{align*}
\lim_{k \to \infty} \frac{1}{\rho_k^d} F_{\rho_k}(u_{x_0,\zeta},Q^\nu_{\rho_k}(x_0)) = f(x_0,\zeta).
\end{align*}
and choose $\{\varepsilon_k\}_k \subset \{\varepsilon_n\}_n$ such that $\varepsilon_k \leq \rho_k$
\begin{align}\label{Energy and Convergencebound}
\begin{split}
&F_{\varepsilon_k}(u_{\varepsilon_k},Q^\nu_{\rho_k}) \leq C\rho_k^d, \quad ||u_{\varepsilon_k}-u_{x_0,\zeta}||_{L^2(Q^\nu_{\rho_k}(x_0))}^2 \leq \rho_k^{d+3} \\
\lim_{k \to \infty} \frac{1}{\rho_k^d} &F_{\varepsilon_k}(u_{\varepsilon_k},Q^\nu_{\rho_k}(x_0)) \leq \lim_{k\to \infty} \frac{1}{\rho_k^d} F_{\varepsilon_k}(u_{x_0,\zeta},Q^\nu_{\rho_k}(x_0)) = f(x_0,\zeta)
\end{split}
\end{align}
and
\begin{align*}
\lim_{k \to \infty}\frac{1}{\rho^d_k} m^{H_{\varepsilon_k}}_{\varepsilon_k,\varepsilon_k}(\zeta\cdot,Q^\nu_{\rho_k}(x_0)) = \lim_{\rho \to 0}\frac{1}{\rho^d}\limsup_{\varepsilon \to 0} m^{H_\varepsilon}_{\varepsilon,\varepsilon}(\zeta\cdot,Q^\nu_\rho(x_0)) = h(x,\zeta).
\end{align*}
Next we construct discrete Lipschitz competitors that still have less energy than the recovery sequence $u_{\varepsilon_k}$. 

\medskip

\textbf{Notation:}
For a function $u : Z_\varepsilon(A) \to \mathbb{R}$ we write
\begin{align*}
u\left(A\right) = \sum_{i \in Z_\varepsilon(A)}u(i)
\end{align*}

\medskip

The strategy of the proof is to use a discrete Lusin approximation of $BV$-functions in order to construct a sequence $v_\varepsilon$ such that $v_\varepsilon = \zeta(\cdot-x_0) $ on $(\partial Q^\nu_\rho(x_0))_\eta$  and
\begin{align*}
\limsup_{\varepsilon \to 0} H_\varepsilon(v_\varepsilon,Q^\nu_\rho(x_0)) \leq \limsup_{\varepsilon \to 0} F_\varepsilon(u_\varepsilon,Q^\nu_\rho(x_0)) + o(\rho^{d}).
\end{align*}

\textbf{Step 1:} Construction of a Lipschitz Competitor. Fix $\lambda >0$ and define 
\begin{align*}
&R_{k}^\lambda = \left\{i \in Z_{\varepsilon_k}(Q^\nu_{\rho_k}(x_0)) : \mathcal{M}_{\varepsilon_k}|
\nabla_{\varepsilon_k} u_{\varepsilon_k}|(i) > \lambda \right\},\\&
S_k^\lambda = \left\{i \in Z_{\varepsilon_k}(Q^\nu_{\rho_k}(x_0)) : |\nabla_{\varepsilon_k} u_{\varepsilon_k}|(i) \leq \frac{\lambda}{2} \right\}.
\end{align*}
Arguing as in the continuum we can estimate the cardinality of $R_k^\lambda$ with the (discrete) $L^1$-norm of the gradient. For every $i \in Z_{\varepsilon_k}(Q^\nu_{\rho_k}(x_0)) \setminus R_k^\lambda$ there exists $0 < \eta_i$ such that $Q_{\eta_i}(i) \subset Q^\nu_{\rho_k}(x_0)$ and
\begin{align}\label{Cardboundlambda}
\lambda \# Z_{\varepsilon_k}( Q_{\eta_i}(i)) < |\nabla_\varepsilon u_{\varepsilon_k}|\left(Q_{\eta_i}(i)\right)
\end{align}
By Vitalis Covering Theorem there exists a finite collection of disjoint cubes $Q_{\eta_i}(i)$ with $\{i_n^k\}_{n=1}^{N_k} \in Z_{\varepsilon_k}(Q^\nu_{\rho_k}(x_0)) \setminus R_k^\lambda$ satisfying (\ref{Cardboundlambda}) and
\begin{align}\label{Rlambdasubsetcubes}
Z_{\varepsilon_k}(Q^\nu_{\rho_k}(x_0)) \setminus R_k^\lambda \subset \bigcup_{n=1}^{N_k} Q_{5\eta_{i_n}}(i_n).
\end{align}
Since the cubes are disjoint, using the definition of $S^\lambda_k$, we conclude that
\begin{align*}
\lambda \#\left(\bigcup_{n=1}^{N_k} Q_{\eta_{i_n}}(i_n)\right) &< |\nabla_{\varepsilon_k} u_{\varepsilon_k}|\left(\bigcup_n Q_{\eta_{i_n}}(i_n) \cap S^\lambda_k\right)\\&\leq |\nabla_{\varepsilon_k} u_{\varepsilon_k}|\left(\bigcup_nQ_{\eta_{i_n}}(i_n) \cap S^\lambda_k\right) + \frac{\lambda}{2} \#\left(\bigcup_{n} Q_{\eta_{i_n}}(i_n)\right).
\end{align*}
Rearranging the terms we obtain
\begin{align}\label{Cardinalitycubes}
\#Z_{\varepsilon_k}\left(\bigcup_{n=1}^{N_k} Q_{\eta_{i_n}}(i_n)\right) \leq \frac{2}{\lambda} |\nabla_{\varepsilon_k} u_{\varepsilon_k}|\left(\bigcup_nQ_{\eta_{i_n}}(i_n) \cap S^\lambda_k\right)
\end{align}
Define
\begin{align*}
\mathcal{B}_k= \left\{i \in Z_{\varepsilon_k}(Q_{\rho_k}(x_0)) : |\nabla_{\varepsilon_k} u_{\varepsilon_k}|^2(i) \geq \varepsilon_k^{-1} \right\}
\end{align*}
For $i \in Z_{\varepsilon_k}(Q_{\rho_k}(x_0))$ we have that there exists $\xi \in V$ such that $|D^\xi_{\varepsilon_k} u_{\varepsilon_k}(i)|^2 \geq c \varepsilon_k^{-1}$ and therefore
$
W^{\varepsilon_k}_{i,\xi}(|D^\xi_{\varepsilon_k} u_{\varepsilon_k}(i)|) \geq c\varepsilon_k^{-1}.
$
Hence we obtain that
\begin{align*}
\varepsilon_k^{d-1} \#\left(\mathcal{B}_k \right) \leq C F_{\varepsilon_k}(u_{\varepsilon_k},Q^\nu_{\rho_k}(x_0)) \leq C\rho_k^d.
\end{align*}
Since $||u_{\varepsilon_k}||_\infty \leq C\rho_k$ we have that $|D_{\varepsilon_k}^\xi u_{\varepsilon_k}(i)| \leq C\varepsilon_k^{-1} \rho_k$ and therefore by (\ref{Energy and Convergencebound}) we have
\begin{align}\label{Bkbound}
|\nabla_{\varepsilon_k} u_{\varepsilon_k}|\left(\mathcal{B}_k\right) \leq C \left(\frac{\rho_k}{\varepsilon_k}\right)^d \rho_k.
\end{align}
On the other hand by H\"older's Inequality we obtain 
\begin{align}\label{IntGradbound}
|\nabla_{\varepsilon_k}u_{\varepsilon_k}|\left(S^\lambda_k\setminus \mathcal{B}_k\right)  \leq\# \left(S^\lambda_k \setminus \mathcal{B}_k\right)^{\frac{1}{2}} \left(|\nabla_{\varepsilon_k}u_{\varepsilon_k}|^2\left(Q^\nu_{\rho_k}(x_0) \setminus \mathcal{B}_k\right)\right)^{\frac{1}{2}}
\end{align}
Now for $i \notin \mathcal{B}_k$ we obtain that $|D^\xi_{\varepsilon_k} u(i)|^2 \leq \varepsilon_k^{-1}$ for all $\xi \in V$. Hence we infer from the definition of $W_{i,\xi}^\varepsilon$ that for $i \notin \mathcal{B}_k$ there holds
\begin{align*}
|\nabla_{\varepsilon_k} u_{\varepsilon_k}|^2(i) \leq C\sum_{\xi \in V} W_{i,\xi}^\varepsilon(D^\xi_\varepsilon u_{\varepsilon_k}(i)).
\end{align*}
Thus again with (\ref{Energy and Convergencebound}) we obtain
\begin{align}\label{Intsquarebound}
 \left(|\nabla_{\varepsilon_k}u_{\varepsilon_k}|^2\left(Q^\nu_{\rho_k}(x_0) \setminus \mathcal{B}_k \right)\right)^{\frac{1}{2}} \leq C \varepsilon_k^{-\frac{d}{2}} F_{\varepsilon_k}(u_{\varepsilon_k},Q^\nu_{\rho_k}(x_0))^{\frac{1}{2}}\leq C \left(\frac{\rho_k}{\varepsilon_k}\right)^{\frac{d}{2}}.
\end{align}
Combining this with (\ref{IntGradbound}) we obtain
\begin{align}\label{Estimate Gradient}
|\nabla_{\varepsilon_k}u_{\varepsilon_k}|\left(S^\lambda_k \setminus \mathcal{B}_k \right) \leq C \# \left(S^\lambda_k \setminus \mathcal{B}_k\right)^{\frac{1}{2}} \left(\frac{\rho_k}{\varepsilon_k}\right)^{\frac{d}{2}}.
\end{align}
Using (\ref{Intsquarebound}) and the definition of $S^\lambda_k$ we obtain
\begin{align*}
\#\left(S^\lambda_k\setminus \mathcal{B}_k\right)\frac{\lambda^2}{4} \leq |\nabla_{\varepsilon_k}u_{\varepsilon_k}|^2\left(Q^\nu_{\rho_k}(x_0) \setminus \mathcal{B}_k \right) \leq C\left(\frac{\rho_k}{\varepsilon_k}\right)^{d}
\end{align*}
Plugging this into (\ref{Estimate Gradient}) we obtain
\begin{align}\label{finalbound}
|\nabla_{\varepsilon_k}u_{\varepsilon_k}|\left(S^\lambda_k \setminus \mathcal{B}_k \right) \leq C \lambda^{-1}\left(\frac{\rho_k}{\varepsilon_k}\right)^{d}.
\end{align}
Using (\ref{Rlambdasubsetcubes}), (\ref{Cardinalitycubes}), splitting $S^\lambda_k$ into $S^\lambda_k \cap \mathcal{B}_k$ and $S^\lambda_k \setminus \mathcal{B}_k$ and estimating the cardinality separately using (\ref{Bkbound}) and (\ref{finalbound}) we obtain
\begin{align*}
\#\left(Z_{\varepsilon_k}(Q^\nu_{\rho_k}(x_0)) \setminus R_k^\lambda\right)\leq \#Z_{\varepsilon_k}\left(\bigcup_{n=1}^{N_k} Q_{5\eta_{i_n}}(i_n)\right) &\leq C\#Z_{\varepsilon_k}\left(\bigcup_{n=1}^{N_k} Q_{\eta_{i_n}}(i_n)\right) \\&\leq C\left(\frac{\rho_k}{\varepsilon_k}\right)^d\left(\rho_k\lambda^{-1}+\lambda^{-2}\right).
\end{align*}
Choosing $\lambda=\lambda_k= \rho_k^{-1}$ we obtain
\begin{align}\label{CardQsenzaRlambda}
\#\left(Z_{\varepsilon_k}(Q^\nu_{\rho_k}(x_0)) \setminus R_k^\lambda\right)\leq C\left(\frac{\rho_k}{\varepsilon_k}\right)^d \lambda_k^{-2} = C\left(\frac{\rho_k}{\varepsilon_k}\right)^d \rho_k^2  .
\end{align}
Using now Theorem \ref{ExtensionTheorem} we obtain a function $v_{\varepsilon_k} : Z_{\varepsilon_k}(\mathbb{R}^d) \to \mathbb{R} $ such that $\mathrm{lip (v_{\varepsilon_k}})\leq C\rho_k^{-1}$ and $v_{\varepsilon_k} = u_{\varepsilon_k}$ on $R^{\lambda_k}_k$. Moreover again truncating if necessary we can assume that $||v_{\varepsilon_k}||_\infty \leq C\rho_k$.

\medskip

\textbf{Step 2:} Construction of a competitor whose discrete gradients are equi-integrable in $L^2$.

\medskip

In order to modify the functions $v_{\varepsilon_k}$ constructed in the first step we rescale them. To this end we set $\eta_k = \frac{\varepsilon_k}{\rho_k}$ and $Z_{k}' = \eta_k\mathbb{Z}^d - \rho_k^{-1}x_0$ and define $w_{k} : Z_{k} \to \mathbb{R}$ by
\begin{align*}
w_{k}(x) = \rho_k^{-1} v_{\varepsilon_k}(\rho_k x +x_0).
\end{align*}
Note that by the properties of $v_{\varepsilon_k}$ we have that 
\begin{itemize}
\item[i)] $||w_k||_\infty\leq C$
\item[ii)] $|w_k(x) - w_k(y)| \leq C\rho_k^{-1}$ for all $x,y \in Z_k'$
\item[iii)]
$\displaystyle
\rho_k^{-d}\sum_{i \in Z_{\varepsilon_k}(Q^
\nu_{\rho_k}(x_0))} \varepsilon^d_{k} |\nabla_{\varepsilon_k} v_{\varepsilon_k}|^2 = \sum_{i \in Z_k' \cap Q^\nu_1}\eta_k^d  |\nabla_{\eta_k} w_k|^2
$
\item[iv)] $\rho_kw_k(\rho_k^{-1}(x-x_0)) = v_{\varepsilon_k}(x)=  u_{\varepsilon_k}(x)$ for all $x \in R^{\lambda_k}_k $.
\end{itemize}
By ii) we have that $|| |\nabla_{\eta_k} w| ||_\infty \leq C\rho_k^{-1}$.
Now extending $|\nabla_{\eta_k} w|$ piecewise constantly on the cubes $Q_{\eta_k}(x), x \in Z_k'$, viewing it as an element of $L^2(\mathbb{R}^d)$ using iii) and iv) we obtain
\begin{align*}
||\, |\nabla_{\eta_k} w_k|\, ||_{L^2(Q_1^\nu)}^2 &= \sum_{i \in Z_k' \cap Q^\nu_1}\eta_k^d  |\nabla_{\eta_k} w_k|^2 = \sum_{i \in Z_{\varepsilon_k}(Q^
\nu_{\rho_k}(x_0))} \varepsilon^d_{k} |\nabla_{\varepsilon_k} v_{\varepsilon_k}|^2 \\& \leq C\rho_k^{-2}\#\left(Z_{\varepsilon_k}(Q^\nu_{\rho_k}(x_0)) \setminus R_k^\lambda\right) + \sum_{i \in R_k^{\lambda_k}}\varepsilon^d_{k} |\nabla_{\varepsilon_k} v_{\varepsilon_k}|^2.
\end{align*}
Since we have that $|\nabla_{\varepsilon_k} v_{\varepsilon_k}|(i)=|\nabla_{\varepsilon_k} u_{\varepsilon_k}|(i) \leq \mathcal{M}_{\varepsilon_k} |\nabla_{\varepsilon_k} u_{\varepsilon_k}|(i) \leq \rho_k^{-1}\leq \varepsilon_k^{-1}$ for $i \in R^{\lambda_k}_k$ and hence $R^{\lambda_k}_k \subset Z_{\varepsilon_k}(Q^\nu_{\rho_k}(x_0))\setminus \mathcal{B}_k$. Therefore we can use (\ref{Intsquarebound}) and (\ref{CardQsenzaRlambda}) to obtain 
\begin{align*}
||\, |\nabla_{\eta_k} w_k|\, ||_{L^2(Q_1^\nu)}^2 \leq C.
\end{align*}
Using \cite{stein2016harmonic},Theorem 3.1 we have that
\begin{align*}
||\mathcal{M}_{\eta_k} |\nabla_{\eta_k} w_k|\, ||_{L^2(Q_1^\nu)}^2 \leq C||\, |\nabla_{\eta_k} w_k|\, ||_{L^2(Q_1^\nu)}^2 \leq C.
\end{align*}
Applying \cite{fonseca2007modern} Lemma 2.31 there exists a subsequence $\{k\}_k$ not relabelled and an increasing sequence of positive integers $l_k \to \infty$ such that the sequence $\left(\left(\mathcal{M}_{\eta_k} |\nabla_{\eta_k} w_k|\right)_{l_k}\right)^2$ is equi-integrable on $Q^\nu_1$. We need to modify the sequence $w_k$. To this end define
\begin{align*}
R_k = \left\{i \in Z'_k\cap Q^\nu_1 : \mathcal{M}_{\eta_k}|\nabla_{\eta_k}w_k|(i)\leq l_k\right\}.
\end{align*}
Viewing $\mathcal{M}_{\eta_k}|\nabla_{\eta_k}w_k|$ as an element of $L^2(Q^\nu_1)$ we have
\begin{align}\label{CardRk}
\eta_k^d\#\left(Z' \cap Q^\nu_1 \setminus R_k\right) \leq \frac{1}{l_k^2}\int_{Q^\nu_1}\mathcal{M}_{\eta_k}|\nabla_{\eta_k}w_k|^2(x)\mathrm{d}x \leq \frac{C}{l_k^2}.
\end{align}
Note that if $i \in R_k + Q_{R\eta_k}$ we have either $\mathcal{M}_{\eta_k}|\nabla_{\eta_k}w_k|(i)\leq l_k$ or there exists $i' =i+\eta_k \xi$ with $||\xi||_\infty \leq R$ such that $\mathcal{M}_{\eta_k}|\nabla_{\eta_k}w_k|(i') \leq l_k$. Noting that for $r > R\eta_k$ we have
$
Q_{r}(i) \subset Q_{2r}(i') 
$
and therefore 
\begin{align*}
\sup_{r >R\eta_k} \frac{1}{\#(Z'_k \cap Q_r(i)) } \sum_{j \in Z'_k \cap Q_r(i)} |\nabla_{\eta_k}w_k|(j) &\leq  \sup_{r >R\eta_k} \frac{C}{\#(Z'_k \cap Q_{2r}(i')) } \sum_{j \in Z'_k \cap Q_{2r}(i')} |\nabla_{\eta_k}w_k|(j) \\&\leq C\mathcal{M}_{\eta_k}|\nabla_{\eta_k}w_k|(i') \leq C l_k,
\end{align*}
where we used that $\# \left(Z'_k \cap Q_{2r}(i')\right) \leq C  \# \left(Z'_k \cap Q_{r}(i')\right)$.
If $r\leq R\eta_k$ we have
\begin{align*}
\mathcal{M}_{\eta_k}|\nabla_{\eta_k}w_k|(i) &= \frac{1}{\#\left(Z_k'\cap  Q_r(i)\right)}\sum_{j \in Z_k'\cap  Q_r(i)} |\nabla_{\eta_k}w_k|(j) \\&\leq \frac{C}{\# \left(Z'_k \cap Q_{4R\eta_k}(i)\right)} \sum_{j \in Z'_k \cap Q_{4R\eta_k}(i)} |\nabla_{\eta_k}w_k|(j) \\&\leq \frac{C}{\# \left(Z'_k \cap Q_{8R\eta_k}(i')\right)}\sum_{j \in Z'_k \cap Q_{8R\eta_k}(i')} |\nabla_{\eta_k}w_k|(j),
\end{align*}
where we used that there exists $C=C(R)>0$ such that $C^{-1}(R)\#\left(Z_k'\cap Q_{8C\eta_k}(i')\right)\leq  \#\left(Z_k'\cap Q_{4C\eta_k}(i)\right)\leq C(R)$ so that for $y \in R_k +Q_{R\eta_k}$ we have
\begin{align*}
M_{\eta_k} |\nabla_{\eta_k}w_k|(i) \leq C(R)l_k.
\end{align*}
Now again by Theorem \ref{ExtensionTheorem} and the Kirszbraun's extension Theorem we find a sequence Lipschitz functions $u_k : Z'_k \cap Q^\nu_1 \to \mathbb{R}$ such that $u_k(\cdot) =  \rho_k^{-1} u_{\varepsilon_k}(\rho_k\cdot +x_0)$ on $R_k +Q_{R\eta_k}$ and $\mathrm{lip}(u_k)\leq C(R)l_k$. Moreover we can assume that $||u_k||_\infty \leq C$. Note that for $i \in R_k$ we have
\begin{align*}
|\nabla_{\eta_k}u_k|(i) \leq M_{\eta_k} |\nabla_{\eta_k}w_k|(i) = \left(M_{\eta_k} |\nabla_{\eta_k}w_k|(i)\right)_{l_k},
\end{align*}
while for $i \in \left(Z_k' Q^\nu_1\right)\setminus R_k$ there holds
\begin{align*}
|\nabla_{\eta_k}u_k|(i) \leq Cl_k = C \left(M_{\eta_k} |\nabla_{\eta_k}w_k|(i)\right)_{l_k}.
\end{align*}
We therefore have that also the sequence $|\nabla_{\eta_k}u_k|^2$ is equi-integrable.

\textbf{Step 3:} Energy inequality

\medskip
Fix $R> \max\{||\xi||_\infty : \xi \in V\}$ and write $S_k= (R_k +Q_{R\eta_k}) \cap(\rho_k R^{\lambda_k}_k-x_0)$.
First we check that $w_k$ converges to $M\cdot$ in $L^2(Q^\nu_1)$. Using the $L^\infty$ bound of $u_k$, splitting the set into 
\begin{align*}
Q^\nu_1 \cap Z_k' \subset\left( Q^\nu_1 \cap S_k\right) \cup \left( Z_k'\cap Q^\nu_1 \setminus R_k\right) \cup \left( Z_k'\cap Q^\nu_1 \setminus (\rho_k R^{\lambda_k}_k-x_0)\right)   
\end{align*}
using (\ref{Energy and Convergencebound}),(\ref{CardQsenzaRlambda}) and (\ref{CardRk}) we obtain
\begin{align*}
||u_k - u_{x_0,\zeta}||_{L^2(Q^\nu_1)}^2 &\leq C\rho^{-d-2}_k\int_{Q^\nu_\rho(x_0)}|u_{\varepsilon_k}(x)-u_{x_0,\zeta}(x)|^2\mathrm{d}x \\&\quad+ C\eta_k^d\#(Z_k'\cap Q^\nu_1 \setminus R_k)  + \eta_k^d \#(Z_{\varepsilon_k}(Q^\nu_{\rho_k}(x_0))\setminus R^{\lambda_k}_k) \\&\leq C\rho_k + \frac{C}{l_k^2} + C\rho_k^2
\end{align*}
and therefore $u_k \to u_{x_0,\zeta}$ in $L^1(Q^\nu_1)$. Fix $\delta>0$ we have that 
\begin{align*}
\left|\bigcup_{j \in Z_k' \cap Q^\nu_1\setminus S_k}Q_{\eta_k}(j)\right|=\lim_k\eta_k^d \#(Z_k' \cap Q^\nu_1 \setminus S_k) \leq \lim_k C(l_k^{-2} +\rho^2_k) = 0
\end{align*}
using the equi-integrability of $|\nabla_{\eta_k} u_k|^2$ we have that there exists $k_\delta \in \mathbb{N}$ such that for all $k \geq k_\delta$ there holds 
\begin{align} \label{Bigzero}
\sum_{j \in Z_k'\cap Q^\nu_1\setminus S_k}\eta_k^d |\nabla_{\eta_k}u_k|^2 \leq C\int_{\bigcup_{j \in Z_k' \cap Q^\nu_1 S_k}Q_{\eta_k}(j)}|\nabla_{\eta_k}u_k|^2\mathrm{d}x \leq \delta.
\end{align}
For $t>0$ we set
\begin{align*}
A_k(t) = \left\{i \in Z_k'\cap Q^\nu_1  : |\nabla_{\eta_k} u_k|^2(i) >t \right\} . 
\end{align*}
Due to the equiintegrability established in Step 2 we have that there exists $t_\delta > 0$ such that for all $k \geq k_\delta$ there holds
\begin{align}\label{tdeltabound}
\sum_{j \in Z_k'\cap Q^\nu_1\cap \left(A_k(t_\delta)+Q_{R\eta_k}\right)}\eta_k^d |\nabla_{\eta_k}u_k|^2 \leq C\int_{Q^\nu_1 \cap \{|\nabla_{\eta_k}u_k|^2 >t_\delta\}}|\nabla_{\eta_k}u_k|^2\mathrm{d}x \leq \delta.
\end{align}
If $i \in Q^\nu_1 \cap S_k \setminus \left(A_k(t_\delta) +Q_{R\eta_k}\right)$ there holds
\begin{align*}
|D^\xi_{\varepsilon_k} u_{\varepsilon_k}|^2(\rho_k i +x_0) =|D^\xi_{\eta_k} u_k|^2(i) \leq C t_\delta < c\varepsilon_k^{-1}
\end{align*}
for $k $ large enough. The first inequality follows by choosing a path $(i_h)_{h=1}^{||\xi||_1}$ such that $i_1=i, i_N = i+\varepsilon_k\xi$, $i_{n+1}=i_n +e_{j(n)}$ and noting that by Jensen's Inequality there holds
\begin{align*}
|D^\xi_\varepsilon u_{\varepsilon_k}(i)|^2 &=  \frac{1}{|\xi|^2}\left|\sum_{n=1}^{||\xi||_1-1} D^{e_{j(n)}}_{\varepsilon_k}u_{\varepsilon_k}(i_n) \right|^2 \leq  \frac{||\xi_1||_1^2}{|\xi|^2}\sum_{n=1}^{||\xi||_1-1}\left| D^{e_{j(n)}}_{\varepsilon_k}u_{\varepsilon_k}(i_n) \right|^2 \\&\leq C(d) \sum_{n=1}^{||\xi||_1-1}\left| \nabla_{\varepsilon_k} u_{\varepsilon_k}(i_n) \right|^2.
\end{align*}
We therefore have that
\begin{align}\label{goodsetequality}
W_{\rho_k i+x_0,\xi}^\varepsilon(D^\xi_{\varepsilon_k} u_{\varepsilon_k}(\rho_k i +x_0)) = |D^\xi_{\eta_k} u_k|^2(i)
\end{align}
for all $k$ large enough. Performing a change of variables to define
\begin{align*}
w_{\varepsilon_k}(i) = u_k(\rho_k^{-1}(i-x_0)), \quad i \in Z_{\varepsilon_k}(Q^\nu_{\rho_k}(x_0))
\end{align*}
and performing a cut-off construction as in Lemma \ref{LiminfBulkLemma} we have that for $\eta >0$ we have that
\begin{align*}
m^{H_{\varepsilon_k}}_{\varepsilon_k,\varepsilon_k}(u_{x_0,\zeta},Q^\nu_{\rho_k}(x_0)) \leq H_{\varepsilon_k}(w_{\varepsilon_k},Q^\nu_{\rho_k}(x_0)) + o(\rho_k^d).
\end{align*}
Now using (\ref{Energy and Convergencebound}), (\ref{Bigzero})-(\ref{goodsetequality}) we obtain for $k$ large enough
\begin{align*}
\rho^{-d}_k F_{\varepsilon_k}(u_{\varepsilon_k},Q^\nu_{\rho_k}(x_0)) \geq \rho_k^{-d} H_{\varepsilon_k}(w_{\varepsilon_k},Q^\nu_{\rho_k}(x_0)) -3\delta \geq \rho_k^{-d} m^{H_{\varepsilon_k}}_{\varepsilon_k,\varepsilon_k}(u_{x_0,\zeta},Q^\nu_{\rho_k}(x_0)) - 3\delta.
\end{align*}
The claim follows by taking $k \to \infty$ and $\delta \to 0$.
\end{proof}

\textbf{Acknowledgements:} The author thanks Roberto Alicandro, Andrea Braides as well as Manuel Friedrich  for many fruitful discussions. The author acknowledges support from the Austrian Science Fund (FWF) project P 29681 and the fact that this work has been funded by the Vienna Science and Technology Fund (WWTF), the City of Vienna, and Berndorf Privatstiftung through Project MA16-005.

\bibliographystyle{plain}
\bibliography{References}

\end{document}